\Crefname{ALC@unique}{Line}{Lines}
\numberwithin{equation}{section}
\newcommand{\Tau}{\mathcal{T}}
\definecolor{lightblue}{rgb}{0.68, 0.85, 0.9}
\definecolor{pastelblue}{rgb}{0.68, 0.78, 0.81}
\definecolor{palegreen}{rgb}{0.6, 0.98, 0.6}
\definecolor{pastelred}{rgb}{1.0, 0.41, 0.38}
\newcommand{\ue}{u_{\varepsilon}}
\newcommand{\Je}{J_{\varepsilon}}
\newcommand {\half}{ \frac{1}{2} }
\newcommand{\N}{\mathbb{N}} 
\newcommand{\R}{\mathbb{R}}
\newcommand{\norm}[2]{{\left\| #2 \right\|}_{#1}}
\newcommand{\diverg}{\mathrm{div}}
\DeclareMathOperator*{\argmin}{arg\,min}
\newcommand{\normal}{\nu}
\theoremstyle{definition} 
\newtheorem{remark}{Remark}[section]
\theoremstyle{plain}
\newtheorem*{theorem*}{Theorem}
\newtheorem{proposition}{Proposition}[section]
\newtheorem*{proposition*}{Proposition}
\newtheorem{lemma}{Lemma}[section]
\colorlet{linkequation}{blue}
\newcommand*{\SavedEqref}{}
\let\SavedEqref\eqref
\renewcommand*{\eqref}[1]{%
  \begingroup
    \hypersetup{
      linkcolor=linkequation,
      linkbordercolor=linkequation,
    }%
    \SavedEqref{#1}%
  \endgroup
}
\newcommand*\circled[1]{\tikz[baseline=(char.base)]{
    \node[shape=circle,draw,inner sep=2pt] (char) {#1};}}
\title{A phase-field approach for the interface reconstruction in a nonlinear elliptic problem arising from cardiac electrophysiology}
\author{
  Elena Beretta\footnotemark[1], \
  Luca Ratti\footnotemark[1], \ 
	Marco Verani\footnotemark[1]
}
\date{ }
\begin{document}

\maketitle
\renewcommand{\thefootnote}{\fnsymbol{footnote}}
\footnotetext[1]{Dipartimento di Matematica, Politecnico di Milano, P.za Leonardo da Vinci 32, 20133 Milano, Italy, \texttt{ \{elena.beretta,luca.ratti,marco.verani\}@polimi.it} }

\begin{abstract}
In this work we tackle the reconstruction of discontinuous coefficients in a semilinear elliptic equation from the knowledge of the solution on the boundary of the domain, an inverse problem motivated by  biological application in cardiac electrophysiology.
\par
We formulate a constraint minimization problem involving a quadratic mismatch functional enhanced with a regularization term which penalizes the perimeter of the inclusion to be identified. We introduce a phase-field relaxation of the problem, replacing the perimeter term with a Ginzburg-Landau-type energy. We prove the $\Gamma$-convergence of the relaxed functional to the original one (which implies the convergence of the minimizers), we compute the optimality conditions of the phase-field problem and define a reconstruction algorithm based on the use of the Frèchet derivative of the functional. After introducing a discrete version of the problem we implement an iterative algorithm and prove convergence properties. Several numerical results are reported, assessing the effectiveness and the robustness of the algorihtm in identifying arbitrarily-shaped inclusions.
\par
Finally, we compare our approach to a shape derivative based technique, both from a theoretical point of view (computing the sharp interface limit of the optimality conditions) and from a numerical one. 
\end{abstract}

\section{Introduction}
We consider the following Neumann problem, defined over $\Omega \subset \mathbb{R}^2$:
\begin{equation}
	\left\{
	\begin{aligned}
		-\diverg(\tilde{k}(x) \nabla y) + \chi_{\Omega \setminus \omega}y^3 &= f \qquad \text{in } \Omega	\\
		\partial_{\normal} y &= 0 \qquad \text{on } \partial \Omega,
	\end{aligned}
	\right.
	\label{eq:probclassic}
\end{equation}
where $\chi_{\Omega \setminus \omega}$ is the indicator function of $\Omega \setminus \omega$ and
\[
	\tilde{k}(x) = 
	\left\{ 
	\begin{aligned} 
		k & \text{ if } x \in \omega \\ 
		1 & \text{ if } x \in \Omega \setminus \omega, 
	\end{aligned} 
	\right. 
	\quad k_{in} \neq k_{out},
\]
being $ 0 < k \ll 1$ and $f \in L^2(\Omega)$.

The boundary value problem \eqref{eq:probclassic} consists of a semilinear diffusion-reaction equation with discontinuous coefficients across the interface of an inclusion $\omega \subset \Omega$, in which the conducting properties are different from the background medium. Our goal is the determination of the inclusion from the knowledge of the value of $y$ on the boundary $\partial \Omega$, i.e., given the measured data $y_{meas}$ on the boundary $\partial \Omega$, to find $\omega \subset \Omega$ such that the corresponding solution $y$ of \eqref{eq:probclassic} satisfies
	\begin{equation} 
		y|_{\partial \Omega} = y_{meas}. 
		\label{eq:invintro}
	\end{equation}
	Since at the state of the art very few works tackle similar inverse problems in a nonlinear context, the reconstruction problem to which this work is devoted is particularly interesting from both an analytic and a numerical standpoint. 
	
	The direct problem can be related to a meaningful application arising in cardiac electrophysiology, up to several simplifications. In that context (see \cite{book:sundes-lines}, \cite{book:pavarino}), the solution $y$ represents the electric transmembrane potential in the heart tissue, the coefficient $\tilde k$ is the tissue conductivity and the nonlinear reaction term encodes a ionic transmembrane current. An inclusion $\omega$ models the presence of an ischemia, which causes a substantial alteration in the conductivity properties of the tissue.

 The objective of our work, in the long run, is the identification of ischemic regions through a set of measurements of the electric potential acquired on the surface of the myocardium.  
 Indeed, a map of the potential on the boundary of internal heart cavities can be acquired by means of non-contact electrodes carried by a catheter inside a heart cavity; this is the procedure of the so-called intracardiac electrogram technique, which has become a possible (but invasive) inspection technique for patients showing symptoms of heart failure.
 We remark that our model is a simplified version of the more complex \textit{monodomain} model (see e.g. \cite{phd:tung}, \cite{book:sundes-lines}). The monodomain is a continuum model which describes the evolution of the transmembrane potential on the heart tissue according to the conservation law for currents and to a satisfying description of the ionic current, which entails the coupling with a system of ordinary differential equations for the concentration of chemical species. In this preliminary setting, we remove the coupling with the ionic model, adopt instead a phenomenological description of the ionic current, through the introduction of a cubic reaction term. Moreover, we consider the stationary case in presence of a source term which plays the role of the electrical stimulus.

 
\par 
Despite the simplifications, the problem we consider in this paper is a mathematical challenge itself. Indeed, here the difficulties include the nonlinearity of both the direct and the inverse problem, as well as the lack of measurements at disposal.  
\par
In fact, already the linear counterpart of the problem, obtained when the nonlinear reaction term is removed, is strictly related to the \textit{inverse conductivity problem}, also called \textit{Calder\'{o}n problem}, which has been object of several studies in the last decades. Without additional hypotheses on the geometry of the inclusion, but only assuming a sufficient degree of regularity of the interface, uniqueness from knowledge of infinitely many measurements has been proved in \cite{art:isakov_discontinuous} and logarithmic-type stability estimates have been derived in \cite{art:ales}. Finitely many measurements are sufficient to determine uniquenely and in a stable (Lipschitz) way the inclusion introducing additional information either on the shape of the inclusion or on its size, e.g. when the inclusion belongs to a specific class of domains with prescribed shape, such as discs, polygons, spheres, cylinders, polyhedra (see \cite{art:isa-po}, \cite{book:ammari-kang}, \cite{art:barcelo}) or when the volume of the inclusion is small compared to the volume of the domain (see \cite{art:fried-vog}, \cite{art:cfmv}). 
\par
Several reconstruction algorithms has been developed for the solution of the inverse conductivity problem, and it is beyond the purposes of this introduction to provide an exhaustive overview on the topic. Under the assumption that the inclusion to be reconstructed is of small size, we mention the constant current projection algorithm in \cite{art:amm-seo}, the least-squares algorithm proposed in \cite{art:cfmv}, and the linear sampling method in \cite{art:bhv} for similar problems. Although these algorithms have proved to be effective, they heavily rely on the linearity of the problem. On the contrary, it is possible to overcome the strict dependence on the linearity of the problems by aims of a variational approach, based on the constraint minimization of a quadratic misfit functional, as in \cite{art:kv1987}, \cite{art:amstutz2005crack} and \cite{art:ammari2012}. When dealing with the reconstruction of extended inclusions in the linear case, both direct and variational algorithms are available. Among the first ones, we mention \cite{art:bhv} and \cite{art:ikehata2000}; instead, from a variational standpoint, a shape-optimization approach to the minimization of the mismatch functional, with suitable regularization, is explored in \cite{art:kalvk} \cite{art:hett-run}, \cite{art:afraites} and \cite{art:abfkl}. In \cite{art:hintermuller2008} and \cite{art:cmm}, this approach is coupled with topology optimization; whereas the level set technique coupled with shape optimization technique has been applied in \cite{art:santosa} and in \cite{art:ito-kunish}, \cite{art:burger2003}, \cite{art:chan-tai}, also including a Total Variation regularization of the functional. Recently, total-variation-based schemes have been employed to solve inverse problems: along this line we mention, among the others, the Levenberg-Marquardt and Landweber algorithms in \cite{art:bb} and the augmented Lagrangian approach in \cite{art:chen1999}, \cite[Chapter 10]{book:bartels} and \cite{art:bartels2012}. Finally, the phase field approach has been explored for the linear inverse conductivity problem e.g in \cite{art:rondi2001} and recently in \cite{art:deckelnick}, but consists in a novelty for the non-linear problem considered in this paper.
\par
Concerning the reconstruction algorithm for inverse problems dealing with non-linear PDEs, we recall some works related to sensitivity analysis for semilinear elliptic problems as \cite{art:scheid}, \cite{art:amstutzNL}, although in different contexts with respect to our application. We remark that the level-set method has been implemented for the reconstruction of extended inclusion in the nonlinear problem of cardiac electrophysiology (see \cite{art:lyka} and \cite{chavez2015}), by evaluating the sensitivity of the cost functional with respect to a selected set of parameters involved in the full discretization of the shape of the inclusion.
In \cite{art:BMR} the authors, taking advantage from the results obtained in \cite{art:bcmp}, proposed a reconstruction algorithm for the nonlinear problem \eqref{eq:probclassic} based on topological optimization, where a suitable quadratic functional is minimized to detect the position of small inclusions separated from the boundary. In \cite{art:BCCMR} the results obtained in \cite{art:BMR} and \cite{art:bcmp} have been extended to the time-dependent monodomain equation under the same assumptions. Clearly, this type of assumptions on the unknown inclusions are quite restrictive particulary for the application we have in mind.  
In this paper we propose a reconstruction algorithm of conductivity inclusions of arbitrary shape and position by relying on the minimization of a suitable boundary misfit functional, enhanced with a perimeter penalization term, and, following the approach in \cite{art:deckelnick}, by introducing a relaxed functional obtained by using a suitable phase field approximation, where the discontinuity interface of the inclusion is replaced by a diffuse interface with small thickness expressed in terms of a positive relaxation parameter $\varepsilon$ and the perimeter functional is replaced by the Ginzburg-Landau energy. 
\par
The outline of the paper is as follows: in Section \ref{sec:regul} we introduce and motivate the Total Variation regularization for the optimization problem. In Section \ref{sec:relax}, after introducing the phase-field regularization of the problem and discussing its well-posedness, we show $\Gamma$-convergence of the relaxed functional to the original one as the relaxation parameter approaches zero. We furthermore derive necessary optimality conditions associated to the relaxed problem, exploiting the Fréchet derivative of the functional.
The computational approach proposed in Section \ref{sec:discr} is based on a finite element approximation similarly to the one introduced in \cite{art:deckelnick}.  Despite the presence of the nonlinear term in the PDE it is possible to show that the  the discretized solution converges to the solution of the phase field problem. We derive an iterative method which is shown to yield an energy decreasing sequence converging to a discrete critical point. The power of this approach is twofold: on one hand it allows to consider conductivity inclusions of arbitrary shape and position which is the case of interest for our application and on the other it leads to remarkable reconstructions as shown in the numerical experiments in Section \ref{sec:numer}. 
Finally, in Section \ref{sec:shape} we compare our technique to the shape optimization approach: after showing the optimality conditions derived for the relaxed problem converge to the ones corresponding to the sharp interface one, we show numerical results obtained by applying both the algorithms on the same benchmark cases.

\section{Minimization problem and its regularization}
\label{sec:regul}
In this section, we give a rigorous formulation both of the direct and of the inverse problem in study. The analysis of the well-posedness of the direct problem is reported in details in the Appendix, and consists in an extension of the results previously obtained in \cite{art:bcmp}. The well-posedness of the inverse problem is analysed in this section: in particular, we formulate an associated constraint minimization problem and investigate the stability of its solution under perturbation of the data, following an approach analogous to \cite[Chapter 10]{book:engl}, but setting the entire analysis in a non-reflexive Banach space, which entails further complications. The strategy adopted to overcome the instability is the introduction of a Tikhonov regularization, and the properties of the regularized problem are reported and proved in details. 
\par
We formulate the problems \eqref{eq:probclassic} and \eqref{eq:invintro} in terms of the indicator function of the inclusion, $u = \chi_{\omega}$. We assume an \textit{a priori} hypothesis on the inclusion, namely that it is a subset of $\Omega$ of finite perimeter: hence, $u$ belongs to $BV(\Omega)$, i.e. the space of the $L^1(\Omega)$ functions for which the Total Variation is finite, being
\[
TV(u) = \sup \left\{ \int_\Omega u \text{div}(\phi); \quad \phi \in C^1_0(\Omega;\R^2), \ \norm{L^\infty}{\phi}\leq 1 \right\},
\]
endowed with the norm $\norm{BV}{\cdot} = \norm{L^1}{\cdot} + TV(\cdot)$. In particular, 
\[
u \in X_{0,1}= \{ v \in BV(\Omega): v(x) \in \{0,1\} \text{ a.e. in $\Omega$ }\}.
\]
The weak formulation of the direct problem \eqref{eq:probclassic} in terms of $u$ reads: find $y$ in $H^1(\Omega)$ s.t., $\forall \varphi \in H^1(\Omega)$,
\begin{equation}
	\int_{\Omega} a(u) \nabla y \nabla \varphi + \int_{\Omega} b(u) y^3 \varphi = \int_{\Omega} f \varphi,
\label{eq:prob}
\end{equation} 
being $a(u) = 1-(1-k)u$ and $b(u) = 1-u$. 
Define $S: X_{0,1} \rightarrow H^1(\Omega)$ the \textit{solution map}: for all $u \in X_{0,1}$, $S(u) = y$ is the solution to problem \eqref{eq:prob} with indicator function $u$; the inverse problem consists in:
\begin{equation}
 \textit{ find } u \in X_{0,1} \textit{ s.t. } S(u)|_{\partial \Omega} = y_{meas}.
\label{eq:inv}
\end{equation}
As it is proved in the Appendix, in Proposition \ref{prop:wellpos}, the solution map $S$ is well defined between the spaces $BV(\Omega;[0,1])$ and $H^1(\Omega)$, thus for each $u \in X_{0,1}$ there exists a unique solution $S(u) \in H^1(\Omega)$.
\par 
We introduce the following constraint optimization problem:
\begin{equation}
	\argmin_{u \in X_{0,1}} J(u); \qquad J(u) = \half \norm{L^2(\partial \Omega)}{S(u) - y_{meas}}^2.
\label{eq:opt}
\end{equation}
It is well known that this problem is ill-posed, and in particular instability under the perturbation of the boundary data occurs.

\par
A possible way to recover well-posedness for the minimization problem in $BV(\Omega)$ is to introduce a Tikhonov regularization term in the functional to minimize, e.g. a penalization term for the perimeter of the inclusion. The regularized problem reads:
\begin{equation}
	\argmin_{u \in X_{0,1}} J_{reg}(u); \quad J_{reg}(u) = \half \norm{L^2(\partial\Omega)}{S(u) - y_{meas}}^2 + \alpha TV(u),
\label{eq:minreg}
\end{equation}
Then, we can prove the following properties:
\begin{itemize}
	\item for every $\alpha >0$ there exists at least one solution to \eqref{eq:minreg};
	\item the solutions of \eqref{eq:minreg} are stable w.r.t. perturbation of the data $y_{meas}$;
	\item if $\{\alpha_k\}$ is a sequence of penalization parameters suitably chosen, then the sequence of the corresponding minimizers $\{u_k\}$ has a subsequence converging to a minimum-norm solution of \eqref{eq:opt}.
\end{itemize}

Before proving the listed statements, it is necessary to formulate and prove a continuity result for the solution map with respect to the $L^1$ norm, which consists in an essential property for the following analysis and requires an accurate treatment due to the non-linearity of the direct problem.
\begin{proposition}
Let $f \in L^2(\Omega)$ satisfy the hypotheses in Proposition \ref{prop:geq} or \ref{prop:geq2}.	If $\{ u_n \} \subset X_{0,1}$ s.t. $u_n \xrightarrow{L^1} u \in X_{0,1}$, then $S(u_n)|_{\partial \Omega} \xrightarrow{L^2(\partial \Omega)}S(u)|_{\partial \Omega}$.
\label{prop:continuity}
\end{proposition}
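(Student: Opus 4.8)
The plan is to set $y_n = S(u_n)$ and $y = S(u)$, to show that $\{y_n\}$ is weakly precompact in $H^1(\Omega)$, that every weak limit point coincides with $S(u)$ via passing to the limit in the weak formulation \eqref{eq:prob}, and finally to upgrade weak $H^1$-convergence to strong convergence of the boundary traces through compactness of the trace operator. Since the limit will turn out to be unique, a standard subsequence argument then yields convergence of the whole sequence, from which the claimed $L^2(\partial \Omega)$ convergence of the traces follows.

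First I would establish a uniform a priori bound $\norm{H^1}{y_n} \le C$ with $C$ independent of $n$. Testing \eqref{eq:prob} with $\varphi = y_n$ and using that $u_n \in X_{0,1}$ forces $a(u_n) \in \{k,1\}$, hence $a(u_n) \ge k > 0$, together with $b(u_n) = 1-u_n \ge 0$ so that the reaction contribution $\int_{\Omega} b(u_n) y_n^4$ is nonnegative, gives coercivity of the left-hand side; combined with the hypotheses on $f$ from Proposition~\ref{prop:geq} or~\ref{prop:geq2} this bounds $\norm{H^1}{y_n}$ uniformly. By reflexivity I extract a subsequence (not relabelled) with $y_n \rightharpoonup y^\ast$ weakly in $H^1(\Omega)$; the compact embedding $H^1(\Omega) \hookrightarrow L^p(\Omega)$ (valid for every $p < \infty$ in dimension two) yields $y_n \to y^\ast$ strongly in $L^p(\Omega)$, while compactness of the trace operator yields $y_n|_{\partial \Omega} \to y^\ast|_{\partial \Omega}$ strongly in $L^2(\partial \Omega)$.

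The core step is to pass to the limit in \eqref{eq:prob} and identify $y^\ast$. Since $u_n \to u$ in $L^1(\Omega)$ with $u_n, u$ taking values in $\{0,1\}$, a further subsequence converges a.e.; as $a$ and $b$ are affine and bounded, dominated convergence gives $a(u_n) \to a(u)$ and $b(u_n) \to b(u)$ strongly in every $L^q(\Omega)$, $q < \infty$. For the diffusion term I would regard $\int_{\Omega} a(u_n)\nabla y_n \nabla \varphi$ as the pairing of $a(u_n)\nabla \varphi \to a(u)\nabla \varphi$ (strongly in $L^2$, by dominated convergence since $\abs{a(u_n)} \le 1$) with $\nabla y_n \rightharpoonup \nabla y^\ast$ (weakly in $L^2$), so the product of a strongly and a weakly convergent factor converges. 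For the cubic term I would use that the uniform $H^1$-bound makes $y_n$ bounded in $L^6(\Omega)$, whence the strong $L^6$-convergence gives $y_n^3 \to (y^\ast)^3$ strongly in $L^2(\Omega)$, and then $b(u_n) y_n^3 \to b(u)(y^\ast)^3$ strongly in $L^2(\Omega)$; pairing against $\varphi \in L^2(\Omega)$ passes the limit. Hence $y^\ast$ satisfies \eqref{eq:prob} with coefficient $u$.

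By the well-posedness of the direct problem (Proposition~\ref{prop:wellpos}) the solution is unique, so $y^\ast = S(u) = y$; since the limit is independent of the extracted subsequence, the whole sequence $y_n|_{\partial \Omega}$ converges to $y|_{\partial \Omega}$ in $L^2(\partial \Omega)$, which is the assertion. I expect the main obstacle to be the passage to the limit in the nonlinear reaction term: one must simultaneously control the strong convergence of $y_n^3$ (through the Sobolev embedding and the uniform energy bound) and the merely $L^1$-convergence of the discontinuous coefficients, and then reconcile these with the weak convergence appearing in the diffusion term. A secondary delicate point is that it is precisely the uniqueness of the limit that permits promoting the subsequential convergence to convergence of the entire sequence.
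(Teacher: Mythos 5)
Your argument is correct in outline but follows a genuinely different route from the paper. The paper works directly with the difference $w_n = S(u_n)-S(u)$: subtracting the two weak formulations, testing with $w_n$, and combining $a(u_n)\ge k$, the elementary bound $q_n \ge \tfrac34 S(u)^2$, the lower bound $b(u)S(u)^2\ge Q$ on $\Omega^*$ from Proposition \ref{prop:geq} (this is where the hypotheses on $f$ actually enter) and the generalized Poincar\'e inequality of Lemma \ref{Lemma1}, it obtains $\norm{H^1(\Omega)}{w_n}\le C(q_1+q_2+q_3)$ with each $q_i\to 0$ by dominated convergence; this yields \emph{strong} $H^1(\Omega)$ convergence, and the traces converge by the continuous trace inequality. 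You instead argue by soft compactness: uniform a priori bound, weak $H^1$ limit along a subsequence, identification of the limit in the weak formulation (strong-times-weak pairing for the diffusion term, strong $L^6$ convergence for the cubic term), uniqueness from Proposition \ref{prop:wellpos}, and the subsequence principle; the identification and trace steps are all sound. What each approach buys: your route never really needs the hypotheses on $f$ of Propositions \ref{prop:geq}/\ref{prop:geq2}, since uniqueness in Proposition \ref{prop:wellpos} holds for any $f\in\left(H^1(\Omega)\right)^*$, so it proves the trace convergence under weaker assumptions; the paper's quantitative route, on the other hand, delivers strong $H^1(\Omega)$ convergence of $S(u_n)$, which is the form of the result invoked later (e.g.\ in Propositions \ref{prop:exist} and \ref{prop:stability}), whereas your argument as written gives only weak $H^1$ convergence plus strong convergence of traces --- enough for the statement, but less than what the paper reuses.

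One step you should tighten is the uniform a priori bound. Testing \eqref{eq:prob} with $y_n$ gives $k\norm{L^2(\Omega)}{\nabla y_n}^2+\int_\Omega b(u_n)y_n^4 \le \norm{(H^1(\Omega))^*}{f}\,\norm{H^1(\Omega)}{y_n}$, but for a pure Neumann problem the $L^2(\Omega)$ norm of $y_n$ is controlled only through the quartic term, i.e.\ through the coercivity argument in the proof of Proposition \ref{prop:wellpos}, whose constant involves the generalized Poincar\'e inequality of Lemma \ref{Lemma1} on the set $\Omega^*_n=\{u_n=0\}$ --- a set that varies with $n$. You need this constant uniform in $n$: since $u_n\to u$ in $L^1(\Omega)$ and $u$ is not a.e.\ equal to $1$, one has $|\Omega^*_n|=|\Omega|-\int_\Omega u_n \to |\Omega|-\int_\Omega u>0$, and a compactness argument shows that the constant in \eqref{eq:Poincare} can be chosen to depend only on $\Omega$ and on a lower bound for $|\Omega^*_n|$, which settles the point. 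In particular, your attribution of the bound to ``the hypotheses on $f$ from Proposition \ref{prop:geq} or \ref{prop:geq2}'' is misplaced: those hypotheses guarantee $b(u)S(u)^2\ge Q>0$ on a set of positive measure and play no role in the energy estimate; the uniform bound comes from $b(u_n)=1$ on sets of uniformly positive measure. With that step repaired, your proof is complete.
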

\begin{proof}
Define $w_n = S(u_n) - S(u)$; then, subtracting the \eqref{eq:prob} evaluated in $u_n$ and the same one evaluated in $u$, $w_n$ is the solution of:
\begin{equation}
	\int_{\Omega} a(u_n) \nabla w_n \nabla \varphi + \int_{\Omega} b(u_n) q_n w_n \varphi =  \int_{\Omega} (1-k)(u_n - u) \nabla S(u) \nabla \varphi - \int_{\Omega} (u_n - u) S(u)^3 \varphi,
\label{eq:diff}
\end{equation} 
where $q_n = S(u_n)^2 + S(u_n) S(u) + S(u)^2$. Considering $\varphi = w_n$ and taking advantage of the fact that $a(u_n) \geq k$ and (by simple computation) $q_n \geq \frac{3}{4}S(u)^2$, we can show that, via Cauchy-Schwarz inequality,
\[
\begin{aligned}
k \norm{L^2(\Omega)}{\nabla w_n}^2 + \frac{3}{4}\int_\Omega b(u_n) S(u)^2 w_n^2 \leq &(1-k)\norm{L^2(\Omega)}{(u_n-u)\nabla S(u)} \norm{L^2(\Omega)}{\nabla w_n} 
 \\ &+\norm{L^2(\Omega)}{(u_n-u)S(u)^3}\norm{L^2(\Omega)}{w_n}.
\end{aligned}
\]
We remark that $(u_n - u) S(u)^3 \in L^2(\Omega)$ since $S(u) \in H^1(\Omega) \subset \subset L^6(\Omega)$. Moreover,
\[
\begin{aligned}
k \norm{L^2(\Omega)}{\nabla w_n}^2 + \frac{3}{4}\int_\Omega b(u) S(u)^2 w_n^2 \leq &(1-k)\norm{L^2(\Omega)}{(u_n-u)\nabla S(u)} \norm{L^2(\Omega)}{\nabla w_n} \\&+ \norm{L^2(\Omega)}{(u_n-u)S(u)^3}\norm{L^2(\Omega)}{w_n} + \frac{3}{4}\int_\Omega (u_n-u)S(u)^2 w_n^2
\end{aligned}
\]
Thanks to Proposition \ref{prop:geq}, $\exists Q>0,$ $\Omega^*\subset\Omega$ s.t. $|\Omega^*|\neq 0$ s.t. 
\[
k \norm{L^2(\Omega)}{\nabla w_n}^2 + \frac{3}{4}Q \norm{L^2(\Omega^*)}{w_n}^2 \leq (q_1 + q_2 + q_3) \norm{H^1(\Omega)}{w_n},
\]
where $q_1 = \norm{L^2(\Omega)}{(u_n - u)\nabla S(u)}$, $q_2 = \norm{L^2(\Omega)}{(u_n - u) S(u)^3}$ and $q_1 = \frac{3}{4}\norm{L^2(\Omega)}{(u_n - u) S(u)^2}$, which implies, thanks to the Poincarè inequality in Lemma \ref{Lemma1}, 
\[
\norm{H^1(\Omega)}{w_n} \leq C (q_1 + q_2 + q_3).
\]
Consider 
\[
q_1 = \left( \int_\Omega (u_n - u)^2 |\nabla S(u)|^2 \right)^\half;
\]
since  $u_n \xrightarrow{L^1} u$, then (up to a subsequence) $u_n \rightarrow u$ pointwise almost everywhere, then also the integrand $(u_n - u)^2 |\nabla S(u)|^2$ converges to $0$. Moreover, $|u_n - u| \leq 1$, hence $\forall n$ $(u_n - u)^2 |\nabla S(u)|^2 \leq |\nabla S(u)|^2 \in L^1(\Omega)$, and thanks to Lebesgue convergence theorem, we conclude that $q_1 \rightarrow 0$. Analogously, $q_2 \rightarrow 0$, $q_3 \rightarrow 0$ and eventually $\norm{H^1(\Omega)}{w_n} \rightarrow 0$, i.e. $S(u_n) \xrightarrow{H^1} S(u)$. Thanks to the trace inequality, we can assess that also $S(u_n)|_{\partial \Omega} \xrightarrow{L^2(\partial \Omega)} S(u)|_{\partial \Omega}$.
\end{proof}

It is now possible to verify the expected properties of the regularized optimization problem.
\begin{proposition}
	For every $\alpha >0$ there exists a solution of \eqref{eq:minreg}
\label{prop:exist}
\end{proposition}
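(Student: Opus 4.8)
The plan is to prove existence of a minimizer of the regularized functional $J_{reg}$ over the admissible set $X_{0,1}$ via the direct method of the calculus of variations. First I would observe that $J_{reg} \geq 0$, so the infimum $m = \inf_{u \in X_{0,1}} J_{reg}(u)$ is a finite nonnegative number (finiteness of the infimum also follows by evaluating $J_{reg}$ at any fixed admissible competitor, e.g. $u \equiv 0$, which has $TV(u)=0$). Then I would extract a minimizing sequence $\{u_n\} \subset X_{0,1}$ with $J_{reg}(u_n) \to m$. The key gain from the Tikhonov term is a uniform bound: since the mismatch term is nonnegative, $\alpha\, TV(u_n) \leq J_{reg}(u_n)$, which stays bounded; and since $u_n \in \{0,1\}$ a.e.\ in the bounded domain $\Omega$, the $L^1$-norms $\norm{L^1}{u_n} = |\{u_n = 1\}| \leq |\Omega|$ are uniformly bounded. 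Hence $\{u_n\}$ is bounded in $BV(\Omega)$.

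Next I would invoke the compactness theorem for $BV$: a sequence bounded in $BV(\Omega)$ admits a subsequence (not relabelled) converging strongly in $L^1(\Omega)$ to some $u \in BV(\Omega)$, and by lower semicontinuity of the total variation, $TV(u) \leq \liminf_n TV(u_n)$. I would then need to check that the limit $u$ remains in the admissible class $X_{0,1}$, i.e.\ that $u(x) \in \{0,1\}$ a.e. This follows because $L^1$ convergence yields a further subsequence converging pointwise a.e., and a pointwise a.e.\ limit of functions taking values in the closed set $\{0,1\}$ again takes values in $\{0,1\}$; thus $u \in X_{0,1}$.

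For the mismatch term I would apply Proposition \ref{prop:continuity}: since $u_n \xrightarrow{L^1} u$ with all iterates in $X_{0,1}$, the trace of the solution map converges, $S(u_n)|_{\partial \Omega} \xrightarrow{L^2(\partial\Omega)} S(u)|_{\partial\Omega}$, whence $\half \norm{L^2(\partial\Omega)}{S(u_n)-y_{meas}}^2 \to \half \norm{L^2(\partial\Omega)}{S(u)-y_{meas}}^2$ by continuity of the squared norm. Combining the continuity of the mismatch term with the lower semicontinuity of $\alpha\, TV(\cdot)$ gives
\[
J_{reg}(u) \leq \liminf_{n\to\infty} J_{reg}(u_n) = m,
\]
and since $u \in X_{0,1}$ we also have $J_{reg}(u) \geq m$, so $J_{reg}(u)=m$ and $u$ is a minimizer.

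The main obstacle I anticipate is the interplay between the hypotheses needed to apply Proposition \ref{prop:continuity} and the fact that this continuity result was itself proved only up to a subsequence (it relies on passing to a pointwise a.e.\ convergent subsequence). One must be careful that the two subsequence extractions — one for $BV$ compactness and one inside the continuity argument — are compatible, but this is harmless: once $u_n \to u$ strongly in $L^1$ along the minimizing subsequence, the conclusion of Proposition \ref{prop:continuity} holds for the full (sub)sequence, since every subsequence has a further subsequence along which the traces converge to the same limit $S(u)|_{\partial\Omega}$. The only genuine hypothesis to keep track of is the requirement on $f$ from Proposition \ref{prop:continuity} (the assumptions of Proposition \ref{prop:geq} or \ref{prop:geq2}), which I would carry as a standing assumption throughout the existence proof.
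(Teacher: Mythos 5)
Your proof is correct and follows essentially the same route as the paper: a minimizing sequence, the $BV$ compactness theorem yielding $L^1$ convergence, Proposition \ref{prop:continuity} for continuity of the mismatch term, and lower semicontinuity of the total variation. In fact you are slightly more careful than the paper at two points --- verifying explicitly via pointwise a.e.\ convergence that the limit stays in $X_{0,1}$ (where the paper merely asserts weak closedness of $\mathcal{D}(S)$), and noting the subsequence-within-subsequence compatibility in applying Proposition \ref{prop:continuity}.
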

\begin{proof}
	Let $\{u_n\}$ be a minimizing sequence: then $\{S(u_n)|_{\partial \Omega}\}$ is bounded in $L^2(\partial \Omega)$ and $\{u_n\}$ is bounded in $BV(\Omega)$ (since $\{TV(u_n)\}$ is bounded and $\norm{L^1(\Omega)}{u_n}\leq |\Omega|$ for all $u_n \in X_{0,1}$). Thanks to the result of compactness for the $BV$ space (see \cite{book:ambrosiofuscopallara}, Theorem 3.23), there exists a subsequence $u_{n_k}$ weakly converging to an element $\bar{u} \in BV(\Omega)$. Moreover, being $\mathcal{D}(S)$ weakly closed, $\bar{u} \in \mathcal{D}(S)$. Since the weak $BV-$convergence implies the $L^1-$convergence, thanks to Proposition \ref{prop:continuity} we can assess that $S(u_{n_k}) \rightarrow S(\bar{u})$ in $H^1(\Omega)$ and in $L^2(\partial \Omega)$. Eventually, this proves that $\norm{L^2(\partial\Omega)}{S(u_{n_k})-y_{meas}} \rightarrow \norm{L^2(\partial\Omega)}{S(u)-y_{meas}}$. Anologously, by semi-continuity of the total variation with respect to the weak convergence in BV, $TV(\bar{u}) \leq \liminf_{k} TV(u_{n_k})$, and  it is possible to conclude that
\[
	\half \norm{L^2(\partial\Omega)}{S(u)-y_{meas}}^2 + \alpha TV(u) \leq \liminf_k (\half \norm{L^2(\partial\Omega)}{S(u_{n_k})-y_{meas}}^2 + \alpha TV(u_{n_k})),
\]
thus $u$ is a minimum of the functional. 
\end{proof}
Even if the existence of the solution is ensured by the previous result, uniqueness cannot be guaranteed since the functional is neither linear nor convex (in general).
We now investigate the stability of the minimizer of the regularized cost functional with respect to small perturbations of the boundary data. We point out that, due to the non-reflexivity of the Banach space $BV$, it is not possible to formulate a stability result with respect to the strong $BV$ convergence; nevertheless, we can perform the analysis with respect to the \textit{intermediate convergence} of $BV$ functions. A sequence $\{u_n\} \subset BV(\Omega)$ tends to $u \in BV(\Omega)$ in the sense of the intermediate convergence iff $ u_n\xrightarrow{L^1} \bar{u}$ and $TV(u_n)\rightarrow TV(\bar{u})$.

\begin{proposition}
	Fix $\alpha >0$ and consider a sequence $\{y_k\}\subset L^2(\partial \Omega)$ such that $y_k \rightarrow y_{meas}$ in $L^2(\partial \Omega)$. Consider the sequence $\{u_k\}$, where $u_k$ is a solution of \eqref{eq:minreg} with datum $y_k$. Then there exists a subsequence $\{u_{k_n}\}$ which converges to a minimizer $\bar{u}$ of \eqref{eq:minreg} with datum $y_{meas}$ in the sense of the intermediate convergence.
\label{prop:stability}
\end{proposition}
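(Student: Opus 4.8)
The plan is to follow the standard Tikhonov-regularization stability scheme (as in \cite[Chapter 10]{book:engl}), adapted to the non-reflexive $BV$ setting and relying on the continuity of the solution map from Proposition \ref{prop:continuity} to absorb the nonlinearity. Throughout I write $J_{reg}^{y}$ for the functional in \eqref{eq:minreg} with datum $y$. First I would derive a uniform $BV(\Omega)$ bound on $\{u_k\}$. Since every $u_k \in X_{0,1}$, the $L^1$-norms are automatically controlled by $\norm{L^1(\Omega)}{u_k} \leq |\Omega|$, so only the total variations must be bounded. To this end I would test the minimality of $u_k$ against the fixed admissible competitor $v \equiv 0$ (the empty inclusion), obtaining
\[
\alpha TV(u_k) \leq \half \norm{L^2(\partial\Omega)}{S(u_k) - y_k}^2 + \alpha TV(u_k) \leq \half \norm{L^2(\partial\Omega)}{S(0) - y_k}^2.
\]
Because $y_k \to y_{meas}$ in $L^2(\partial\Omega)$ the right-hand side is bounded uniformly in $k$, whence $\sup_k TV(u_k) < \infty$ and $\{u_k\}$ is bounded in $BV(\Omega)$.

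Next I would extract a convergent subsequence. By the $BV$-compactness theorem (\cite[Theorem 3.23]{book:ambrosiofuscopallara}) there is a subsequence $\{u_{k_n}\}$ converging weakly in $BV$ to some $\bar{u}$, hence strongly in $L^1(\Omega)$; since the $u_{k_n}$ take values in $\{0,1\}$ a.e. and $L^1$-convergence yields a.e.\ convergence along a further subsequence, the limit satisfies $\bar{u}\in X_{0,1}$. Applying Proposition \ref{prop:continuity} to $u_{k_n}\xrightarrow{L^1}\bar u$ gives $S(u_{k_n})|_{\partial\Omega} \to S(\bar u)|_{\partial\Omega}$ in $L^2(\partial\Omega)$; combined with $y_{k_n}\to y_{meas}$ this yields the strong $L^2(\partial\Omega)$ convergence $S(u_{k_n}) - y_{k_n} \to S(\bar u) - y_{meas}$, so the misfit terms genuinely converge (not merely their $\liminf$).

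To prove $\bar u$ is a minimizer I would fix an arbitrary competitor $v \in X_{0,1}$, write the minimality inequality $J_{reg}^{y_{k_n}}(u_{k_n}) \leq J_{reg}^{y_{k_n}}(v)$, and pass to the $\liminf$. The right-hand side converges to $J_{reg}^{y_{meas}}(v)$, since only the misfit depends on the datum and $S(v)$ is fixed; on the left the misfit converges by the previous step, while $TV$ is lower semicontinuous under $L^1$-convergence, so $\liminf_n TV(u_{k_n}) \geq TV(\bar u)$. Collecting these gives $J_{reg}^{y_{meas}}(\bar u) \leq \liminf_n J_{reg}^{y_{k_n}}(u_{k_n}) \leq J_{reg}^{y_{meas}}(v)$ for every admissible $v$, i.e.\ $\bar u$ minimizes \eqref{eq:minreg} with datum $y_{meas}$.

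The delicate final step — the part I expect to be the main obstacle — is upgrading $L^1$-convergence to \emph{intermediate} convergence, which additionally requires $TV(u_{k_n}) \to TV(\bar u)$. Here I would choose the specific competitor $v = \bar u$ in the minimality inequality and pass to the $\limsup$: using again that the misfit terms converge to $\half \norm{L^2(\partial\Omega)}{S(\bar u) - y_{meas}}^2$, I can split the $\limsup$ of the sum and cancel the misfit contributions, obtaining $\limsup_n TV(u_{k_n}) \leq TV(\bar u)$. Together with the lower-semicontinuity bound $\liminf_n TV(u_{k_n}) \geq TV(\bar u)$ this forces $TV(u_{k_n}) \to TV(\bar u)$, and with the $L^1$-convergence already established the subsequence converges to $\bar u$ in the sense of intermediate convergence, completing the proof. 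The only genuinely nonlinear input is Proposition \ref{prop:continuity}; everything else is lower semicontinuity and the comparison structure intrinsic to the regularized functional.
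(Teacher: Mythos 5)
Your proposal is correct and follows essentially the same route as the paper: uniform $BV$ bound from minimality, $BV$-compactness to extract a weakly convergent subsequence, Proposition \ref{prop:continuity} to get strong $L^2(\partial\Omega)$ convergence of the misfit, lower semicontinuity of $TV$ to identify $\bar u$ as a minimizer, and cancellation of the convergent misfit terms to force $TV(u_{k_n})\to TV(\bar u)$. The only differences are cosmetic --- you test against the fixed competitor $v\equiv 0$ to get the a priori bound and use a direct $\limsup$ argument with $v=\bar u$, where the paper keeps the general competitor and squeezes $\liminf$ against $\lim$ of the regularized functional values --- so the two arguments are substantively identical.
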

\begin{proof}
	For every $u_k$, we have that
	$$
	\half \norm{L^2(\partial \Omega)}{S(u_k)-y_k}^2 + \alpha TV(u_k) \leq \half \norm{L^2(\partial \Omega)}{S(u)-y_k}^2 + \alpha TV(u) \quad \forall u \in \mathcal{D}(S).
	$$
Hence,  $\{\norm{L^2(\partial \Omega)}{S(u_k)}\}$ and $\{TV(u_k)\}$ (and therefore $\{\norm{BV(\Omega)}{u_k}\}$) are bounded, and there exists a subsequence $\{u_{k_n}\}$ such that both $u_{k_n} \rightharpoonup \bar{u}$ in $BV(\Omega)$ and $S(u_{k_n}) \rightarrow S(\bar{u})$ in $L^2(\partial \Omega)$. Thanks to the continuity of the map $S$ with respect to the convergence (in $L^1$) of $u_{k_n}$ and to the weak lower semi-continuity of the $BV(\Omega)$ norm,
\begin{equation}
\begin{aligned}
	\half \norm{L^2(\partial \Omega)}{S(\bar{u})-y_{meas}}^2 + \alpha TV(\bar{u}) &\leq \lim \inf_n \left(\half \norm{L^2(\partial \Omega)}{S(u_{k_n})-y_{k_n}}^2 + \alpha TV(u_{k_n}) \right) \\
& \leq \lim_{n} \left(\half \norm{L^2(\partial \Omega)}{S(u)-y_{k_n}}^2 + \alpha TV(u) \right) \quad \forall u \in \mathcal{D}(S)\\& = \half \norm{L^2(\partial \Omega)}{S(u)-y_{meas}}^2 + \alpha TV(u) \quad \forall u \in \mathcal{D}(S).
\end{aligned}
\label{eq:comput}
\end{equation}
Hence, $\bar{u}$ is a solution of problem \eqref{eq:minreg}. 
In order to prove that also $TV(u_{k_n}) \rightarrow TV(\bar{u})$, first consider that, according to \eqref{eq:comput}, 
\[
\begin{aligned}
J_{reg}(\bar{u}) &\leq \liminf_n \left(\half \norm{L^2(\partial \Omega)}{S(u_{k_n})-y_{k_n}}^2 + \alpha TV(u_{k_n}) \right) \\&\leq \lim_n \left(\half \norm{L^2(\partial \Omega)}{S(u_{k_n})-y_{k_n}}^2 + \alpha TV(u_{k_n}) \right) = J_{reg}(\bar{u}),
\end{aligned}
\]
hence
\[
\lim_n \left(\half \norm{L^2(\partial \Omega)}{S(u_{k_n})-y_{n_k}}^2 + \alpha TV(u_{k_n}) \right) = \half \norm{L^2(\partial \Omega)}{S(\bar{u})-y_{meas}}^2 + \alpha TV(\bar{u}).
\]
In addition, thanks to the continuity of $S$, the first term in the sum admits a limit, i.e.:
\[
\lim_n \half \norm{L^2(\partial \Omega)}{S(u_{k_n})-y_{k_n}}^2  = \half \norm{L^2(\partial \Omega)}{S(\bar{u})-y_{meas}}^2 ,
\]
which eventually implies that also $TV(u_{k_n}) \rightarrow TV(\bar{u})$.   
\end{proof}

We finally state and prove the following result regarding asymptotic behaviour of the minimum of $J_{reg}$ when $\alpha \rightarrow 0$.
\begin{proposition}
Consider a sequence $\{ \alpha_k \}$ s.t. $\alpha_k \rightarrow 0$, and define the sequence $\{u_k\}$ of the solutions of \eqref{eq:minreg} with the same datum $y_{meas}$ but different weights $\alpha_k$. Suppose there exists (at least) one solution of the inverse problem \eqref{eq:inv}. Then, $\{u_k\}$ admits a convergent subsequence with respect to the $L^1(\Omega)$ norm and the limit $u$ is a minimum-variation solution of the inverse problem, i.e. $S(u)|_{\partial \Omega} = y_{meas}$ and $TV(u) \leq TV(\widetilde{u})$ $\forall \widetilde{u}$ s.t. $S(\widetilde{u})|_{\partial \Omega} = y_{meas}$.
\label{prop:convergence}
\end{proposition}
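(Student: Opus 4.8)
The argument follows the classical scheme for the convergence of Tikhonov-regularized minimizers as the regularization weight vanishes, adapted here to the $BV$ setting. The plan is to exploit the minimality of each $u_k$ by testing the regularized functional against a fixed exact solution. Let $u^\dagger \in X_{0,1}$ be any solution of the inverse problem \eqref{eq:inv}, which exists by hypothesis, so that $S(u^\dagger)|_{\partial \Omega} = y_{meas}$. Since $u_k$ minimizes $J_{reg}$ with weight $\alpha_k$ and datum $y_{meas}$, comparing the value at $u_k$ with the value at $u^\dagger$ and using that the misfit of $u^\dagger$ vanishes yields
\begin{equation}
\half \norm{L^2(\partial \Omega)}{S(u_k) - y_{meas}}^2 + \alpha_k TV(u_k) \leq \alpha_k TV(u^\dagger).
\label{eq:fundamental}
\end{equation}

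From \eqref{eq:fundamental} I would extract the two estimates that drive the whole proof. First, discarding the nonnegative $TV$ term on the left gives $\norm{L^2(\partial \Omega)}{S(u_k) - y_{meas}}^2 \leq 2 \alpha_k TV(u^\dagger)$, so the boundary misfit tends to zero as $\alpha_k \to 0$. Second, dividing by $\alpha_k > 0$ gives $TV(u_k) \leq TV(u^\dagger)$ for every $k$, hence $\{TV(u_k)\}$ is bounded. Since every $u_k \in X_{0,1}$ satisfies $\norm{L^1(\Omega)}{u_k} \leq |\Omega|$, the sequence $\{u_k\}$ is bounded in $BV(\Omega)$. By the $BV$ compactness theorem (\cite[Theorem 3.23]{book:ambrosiofuscopallara}) there is a subsequence $\{u_{k_n}\}$ converging weakly in $BV(\Omega)$, hence strongly in $L^1(\Omega)$, to some $u \in BV(\Omega)$. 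Passing to a further subsequence so that $u_{k_n} \to u$ pointwise a.e., the limit takes values in $\{0,1\}$ a.e., so that $u \in X_{0,1} \subset \mathcal{D}(S)$.

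It then remains to identify $u$ as a minimum-variation solution. By the continuity of the solution map (Proposition \ref{prop:continuity}), the $L^1$ convergence $u_{k_n} \to u$ gives $S(u_{k_n})|_{\partial \Omega} \to S(u)|_{\partial \Omega}$ in $L^2(\partial \Omega)$; since we have already shown $S(u_{k_n})|_{\partial \Omega} \to y_{meas}$, uniqueness of the limit forces $S(u)|_{\partial \Omega} = y_{meas}$, so $u$ solves \eqref{eq:inv}. Finally, the bound $TV(u_{k_n}) \leq TV(u^\dagger)$ holds for every solution $u^\dagger$, so by lower semicontinuity of the total variation with respect to $L^1$ convergence,
\[
TV(u) \leq \liminf_n TV(u_{k_n}) \leq TV(u^\dagger) \qquad \text{for every } \widetilde{u} \text{ with } S(\widetilde{u})|_{\partial \Omega} = y_{meas},
\]
which is exactly the minimum-variation property.

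I expect the genuinely delicate point to be the exact recovery of the constraint $S(u)|_{\partial \Omega} = y_{meas}$ in the limit: this hinges on combining the vanishing-misfit estimate coming from \eqref{eq:fundamental} with the $L^1$-to-$L^2(\partial\Omega)$ continuity of $S$ from Proposition \ref{prop:continuity}, together with checking that the binary constraint $u \in X_{0,1}$ survives the limit via pointwise a.e.\ convergence along a subsequence. The remaining ingredients — the uniform $BV$ bound, compactness, and lower semicontinuity of $TV$ — are routine.
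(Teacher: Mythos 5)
Your proof is correct and follows essentially the same route as the paper's: the same comparison inequality $\half \norm{L^2(\partial\Omega)}{S(u_k)-y_{meas}}^2 + \alpha_k TV(u_k) \leq \alpha_k TV(u^\dagger)$, $BV$ compactness, continuity of $S$ from Proposition \ref{prop:continuity}, and lower semicontinuity of $TV$. If anything you are slightly more explicit than the paper at two points it leaves implicit --- verifying that the binary constraint $u \in X_{0,1}$ survives the limit via pointwise a.e.\ convergence, and spelling out that $S(u)|_{\partial\Omega} = y_{meas}$ follows from combining the vanishing misfit with the $L^1$-to-$L^2(\partial\Omega)$ continuity of the solution map.
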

\begin{proof}
Let $u^{\dag}$ be a solution of the inverse problem. By definition of $u_k$,
$$
\half \norm{L^2(\partial \Omega)}{S(u_k) - y_{meas}}^2 + \alpha_k TV(u_k) \leq \half \norm{L^2(\partial \Omega)}{S(u^\dag) - y_{meas}}^2 + \alpha_k TV(u^\dag) = \alpha_k TV(u^{\dag})
$$
Hence, $\{TV(u_k)\}$ is bounded, and since $\norm{L^1(\Omega)}{u_k} \leq |\Omega|$, ${u_k}$ is also bounded in $BV(\Omega)$ and there exists a subsequence (still denoted as $u_k$) and $u \in X_{0,1}$ s.t. $u_k \xrightharpoonup{BV} u$. Moreover, $\norm{L^2(\partial \Omega)}{S(u_k)|_{\partial \Omega} -y_{meas}}\rightarrow 0$, which implies that $u$ is a solution of the inverse problem \eqref{eq:inv}, and 
$$
TV(u_k) \leq TV(u^{\dag}) \quad \Rightarrow \quad \limsup_k TV(u_k) \leq TV(u^{\dag})
$$
The lower semicontinuity of the $BV$ norm with respect to the weak convergence, together with the continuity of the $L^1$ norm, implies that
$$
 TV(u) \leq \liminf_k TV(u_k) \leq \limsup_k TV(u_k) \leq TV(u^{\dag})
$$
for each solution $u^\dag$ of the inverse problem, which eventually implies that $u$ is a minimum-variation solution.
\end{proof}

Notice that, if the minimum-variation solution of problem \eqref{eq:opt} is unique, then the sequence $\{u_k\}$ converges to it. 
\par
 The latter result can be improved by considering small perturbation of the data. By similar arguments as in proof of Proposition \ref{prop:convergence}, one can prove the following
\begin{proposition}
Let $y^{\delta} \in L^2(\partial \Omega)$ s.t. $\norm{L^2(\partial \Omega)}{y^{\delta}-y_{meas}} \leq \delta$ and let $\alpha(\delta)$ be such that $\alpha(\delta) \rightarrow 0$ and $\frac{\delta^2}{\alpha(\delta)} \rightarrow 0$ as $\delta \rightarrow 0$. Suppose there exists at least one solution of the inverse problem \eqref{eq:inv}. Then, every sequence $\{u_{\alpha_k}^{\delta_k}\}$, with $\delta_k \rightarrow 0$, $\alpha_k = \alpha(\delta_k)$ and $u_{\alpha_k}^{\delta_k}$ solution of \eqref{eq:minreg} corresponding to $\alpha_k$ and $y^{\delta_k}$, has a converging subsequence with respect to the $L^1(\Omega)$ norm. The limit $u$ of every convergent subsequence is a minimum-variation solution of the inverse problem.
\label{prop:convergence2}
\end{proposition}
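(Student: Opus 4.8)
The plan is to mimic the argument of Proposition \ref{prop:convergence}, replacing the exact datum $y_{meas}$ by the noisy datum $y^{\delta_k}$ and exploiting the coupling $\delta_k^2/\alpha_k \to 0$ to absorb the noise contribution. First I would fix a solution $u^{\dagger}$ of the inverse problem, so that $S(u^{\dagger})|_{\partial\Omega} = y_{meas}$, and use the minimality of $u_{\alpha_k}^{\delta_k}$ for the functional $J_{reg}$ with datum $y^{\delta_k}$ and weight $\alpha_k$ to write
\[
\half \norm{L^2(\partial \Omega)}{S(u_{\alpha_k}^{\delta_k}) - y^{\delta_k}}^2 + \alpha_k TV(u_{\alpha_k}^{\delta_k}) \leq \half \norm{L^2(\partial \Omega)}{S(u^{\dagger}) - y^{\delta_k}}^2 + \alpha_k TV(u^{\dagger}).
\]
Since $S(u^{\dagger})|_{\partial\Omega} = y_{meas}$ and $\norm{L^2(\partial\Omega)}{y^{\delta_k} - y_{meas}} \leq \delta_k$, the right-hand side is bounded by $\half \delta_k^2 + \alpha_k TV(u^{\dagger})$.

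From this single inequality I would extract two consequences. On one hand, $TV(u_{\alpha_k}^{\delta_k}) \leq \frac{\delta_k^2}{2\alpha_k} + TV(u^{\dagger})$, which stays bounded precisely because $\delta_k^2/\alpha_k \to 0$; together with $\norm{L^1(\Omega)}{u_{\alpha_k}^{\delta_k}} \leq |\Omega|$, this gives boundedness in $BV(\Omega)$ and, by the compactness theorem for $BV$, a subsequence converging weakly in $BV$ (hence in $L^1$) to some $u \in X_{0,1}$. On the other hand, the misfit term satisfies $\norm{L^2(\partial\Omega)}{S(u_{\alpha_k}^{\delta_k}) - y^{\delta_k}}^2 \leq \delta_k^2 + 2\alpha_k TV(u^{\dagger}) \to 0$, and combining this with $y^{\delta_k} \to y_{meas}$ via the triangle inequality yields $S(u_{\alpha_k}^{\delta_k})|_{\partial\Omega} \to y_{meas}$ in $L^2(\partial\Omega)$.

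Next I would invoke the continuity of the solution map (Proposition \ref{prop:continuity}): along the convergent subsequence one has $S(u_{\alpha_k}^{\delta_k})|_{\partial\Omega} \to S(u)|_{\partial\Omega}$, so by uniqueness of the limit $S(u)|_{\partial\Omega} = y_{meas}$, i.e. $u$ solves the inverse problem \eqref{eq:inv}. Finally, for the minimum-variation property, I would pass the inequality $TV(u_{\alpha_k}^{\delta_k}) \leq \frac{\delta_k^2}{2\alpha_k} + TV(u^{\dagger})$ to the limit to obtain $\limsup_k TV(u_{\alpha_k}^{\delta_k}) \leq TV(u^{\dagger})$, and combine it with the weak-$BV$ lower semicontinuity of the total variation, $TV(u) \leq \liminf_k TV(u_{\alpha_k}^{\delta_k})$, to conclude $TV(u) \leq TV(u^{\dagger})$ for every solution $u^{\dagger}$; since the generic sequence and the generic convergent subsequence were arbitrary, this establishes both assertions.

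The argument is essentially a routine adaptation of Proposition \ref{prop:convergence}, so I do not expect a genuine obstacle; the only delicate point, and the place where the hypotheses truly enter, is the correct use of the parameter choice $\delta_k^2/\alpha_k \to 0$ to simultaneously keep $TV(u_{\alpha_k}^{\delta_k})$ bounded and force $\limsup_k TV(u_{\alpha_k}^{\delta_k}) \leq TV(u^{\dagger})$. Were $\alpha_k$ to decay too fast relative to $\delta_k$, the noise term $\delta_k^2/\alpha_k$ would blow up and both of these steps would fail, which is exactly what the balancing condition $\delta^2/\alpha(\delta)\to 0$ is designed to prevent.
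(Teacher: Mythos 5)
Your proof is correct and follows essentially the same route as the paper's: the minimality inequality tested against a fixed solution $u^{\dagger}$, the bound $TV(u_{\alpha_k}^{\delta_k}) \leq \delta_k^2/(2\alpha_k) + TV(u^{\dagger})$ combined with $\delta_k^2/\alpha_k \to 0$ for $BV$ compactness, the vanishing misfit plus the continuity of $S$ (Proposition \ref{prop:continuity}) to identify $S(u)|_{\partial\Omega} = y_{meas}$, and weak-$BV$ lower semicontinuity of the total variation for the minimum-variation property. The only differences are cosmetic (you keep the factor $\tfrac12$ that the paper drops, and your triangle-inequality step for the squared norms is actually written more carefully than the paper's).
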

\begin{proof} 
Consider a solution $u^{\dag}$ of the inverse problem. By definition of $u_{\alpha_k}^{\delta_k}$,
\begin{equation}
\half \norm{L^2(\partial \Omega)}{S(u_{\alpha_k}^{\delta_k}) - y^{\delta_k}}^2 + \alpha_k TV(u_{\alpha_k}^{\delta_k}) \leq \half \norm{L^2(\partial \Omega)}{S(u^\dag) - y^{\delta_k}}^2 + \alpha_k TV(u^\dag) \leq \delta_k^2 + \alpha_k TV(u^{\dag})
\label{eq:1.6.1}
\end{equation}
In particular, 
\begin{equation}
TV(u_{\alpha_k}^{\delta_k}) \leq \frac{\delta_k^2}{\alpha_k} + TV(u^{\dag}),
\label{eq:1.6.2}
\end{equation}
hence $\{u_{\alpha_k}^{\delta_k}\}$ is bounded in $BV(\Omega)$ and admits a subsequence (denoted by the same index $k$) such that $\exists u \in X_{0,1}$: $u_{\alpha_k}^{\delta_k}\xrightharpoonup{BV} u$. Passing to the limit in \eqref{eq:1.6.1} as $k \rightarrow +\infty$,
$$
\norm{L^2(\partial \Omega)}{S(u_{\alpha_k}^{\delta_k}) - y^{\delta_k}}^2 \rightarrow 0,
$$
hence also
$$
\norm{L^2(\partial \Omega)}{S(u_{\alpha_k}^{\delta_k}) - y_{meas}}^2 \leq \norm{L^2(\partial \Omega)}{S(u_{\alpha_k}^{\delta_k}) - y^{\delta_k}}^2 + \norm{L^2(\partial \Omega)}{y^{\delta_k} - y_{meas}}^2 \rightarrow 0
$$
and by continuity of the solution map, we have that $S(u)|_{\partial \Omega} = y_{meas}$, which implies that $u$ is a solution of the inverse problem. By lower semi continuity of the BV norm (hence of the total variation) with respect the weak convergence and from inequality \eqref{eq:1.6.2},
$$
 TV(u) \leq \liminf_k TV(u_{\alpha_k}^{\delta_k}) \leq \limsup_k TV(u_{\alpha_k}^{\delta_k}) \leq TV(u^{\dag}),
$$
which allows to conclude that $u$ is also a minimum-variation solution of the inverse problem.
\end{proof}
Thanks to the results outlined in this section, it is possible to assess the stability of the regularized inverse problem:
\[
\argmin_{u \in X_{0,1}} J_{reg}(u); \quad J_{reg}(u) = \half \norm{L^2(\partial \Omega)}{S(u) - y_{meas}}^2 + \alpha TV(u),
\]
In principle, one can obtain successive approximations of the solution of the inverse problem \eqref{eq:inv} by solving the minimization problem \eqref{eq:minreg} with fixed $\alpha > 0$. However, this approach would require to deal with several technical difficulties, namely the non-differentiability of the cost functional $J_{reg}$ and the non-convexity of the space $X_{0,1}$. This will be object of future research by adapting, e.g., the technique in \cite{art:bartels2012} to the present context. However, in the sequel we follow a different strategy, namely introducing a phase-field relaxation of problem \eqref{eq:minreg}. 

\section{Relaxation}
\label{sec:relax}
In this section, we formulate the phase-field relaxation of the optimization problem \eqref{eq:minreg}. Fixed a relaxation parameter $\varepsilon > 0$, the Total Variation term in the expression of $J_{reg}$ is replaced with a smooth approximation, known as Ginzburg-Landau energy or Modica-Mortola functional; moreover, the minimization is set in a space of more regular functions.
We follow a similar strategy as in \cite{art:deckelnick}, with the additional difficulty of the non-linearity of the direct problem. In particular, we prove the main properties of the relaxed problem: existence of a solution is assessed in Proposition \ref{prop:exist2} and convergence to the (sharp) initial problem \eqref{eq:minreg} as $\varepsilon \rightarrow 0$ is proved in Proposition \ref{prop:conv2}. Moreover, in Proposition \ref{prop:optcond}, we describe the optimality conditions associated to the minimization problem, and compute the Frechét derivative of the relaxed cost functional, which is useful for the reconstruction purposes.
\par
Consider $u \in \mathcal{K} = H^1(\Omega;[0,1]) = \{ v \in H^1(\Omega): \ 0 \leq v \leq 1 \ a.e.\}$ and, for every $\varepsilon >0$, introduce the optimization problem:
\begin{equation}
\argmin_{u \in \mathcal{K}} J_{\varepsilon}(u); \quad J_{\varepsilon}(u) = \half \norm{L^2(\partial \Omega)}{S(u)-y_{meas}}^2 + \alpha \int_{\Omega}\left( \varepsilon|\nabla u|^2 + \frac{1}{\varepsilon}u(1-u)\right).
\label{eq:minrel}
\end{equation}

The first theoretical result that is possible to prove guarantees the existence of a solution of the relaxed problem, employing classical techniques of Calculus of Variations.


\begin{proposition}
	For every fixed $\varepsilon > 0$, the minimization problem \eqref{eq:minrel} has a solution $\ue \in \mathcal{K}$.
	\label{prop:exist2}
\end{proposition}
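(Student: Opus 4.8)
The plan is to apply the direct method of the Calculus of Variations. First I would fix a minimizing sequence $\{u_n\} \subset \mathcal{K}$, so that $J_{\varepsilon}(u_n) \to m := \inf_{\mathcal{K}} J_{\varepsilon} \geq 0$. Since the misfit term is nonnegative and, for $u_n \in [0,1]$, the potential density $\frac{1}{\varepsilon}u_n(1-u_n)$ is nonnegative, the bound on $J_{\varepsilon}(u_n)$ forces $\varepsilon \norm{L^2(\Omega)}{\nabla u_n}^2$ to be bounded; as $\varepsilon>0$ is fixed, $\{\nabla u_n\}$ is bounded in $L^2(\Omega)$. Together with the pointwise constraint $0 \leq u_n \leq 1$, which gives $\norm{L^2(\Omega)}{u_n}\leq|\Omega|^{1/2}$, this shows that $\{u_n\}$ is bounded in $H^1(\Omega)$.

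Next I would exploit the reflexivity of the Hilbert space $H^1(\Omega)$ to extract a subsequence (not relabelled) with $u_n \rightharpoonup \bar u$ weakly in $H^1(\Omega)$. By the Rellich--Kondrachov compact embedding $H^1(\Omega) \hookrightarrow\hookrightarrow L^2(\Omega)$, the same subsequence converges strongly in $L^2(\Omega)$ (hence in $L^1(\Omega)$) and, passing to a further subsequence, pointwise a.e.\ to $\bar u$. The a.e.\ convergence preserves the constraint $0 \leq \bar u \leq 1$; equivalently, $\mathcal{K}$ is convex and strongly closed, hence weakly closed, so $\bar u \in \mathcal{K}$.

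It then remains to pass to the limit term by term. The Dirichlet term $\int_\Omega |\nabla u|^2$ is convex and continuous on $H^1(\Omega)$, hence weakly lower semicontinuous, so $\int_\Omega |\nabla \bar u|^2 \leq \liminf_n \int_\Omega |\nabla u_n|^2$. The potential term is the delicate one: its density $u(1-u) = u - u^2$ is concave, so lower semicontinuity is not granted by convexity; however the strong $L^2$ convergence yields $\int_\Omega u_n \to \int_\Omega \bar u$ and $\int_\Omega u_n^2 \to \int_\Omega \bar u^2$, so in fact $\int_\Omega u_n(1-u_n) \to \int_\Omega \bar u(1-\bar u)$. Finally, for the misfit term I would invoke the continuity of the solution map: the argument of Proposition \ref{prop:continuity} uses only that $0 \leq u_n, u \leq 1$ and that $u_n \to u$ in $L^1(\Omega)$, hence it applies verbatim to our sequence in $\mathcal{K}$ and gives $S(u_n)|_{\partial\Omega} \to S(\bar u)|_{\partial\Omega}$ in $L^2(\partial\Omega)$, whence $\norm{L^2(\partial\Omega)}{S(u_n)-y_{meas}}^2 \to \norm{L^2(\partial\Omega)}{S(\bar u)-y_{meas}}^2$. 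Combining the three contributions gives $J_{\varepsilon}(\bar u) \leq \liminf_n J_{\varepsilon}(u_n) = m$, so that $\bar u$ is a minimizer.

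The main obstacle is the presence of two non-convex ingredients. Unlike the standard Modica--Mortola setting, the functional also contains the nonlinear, non-convex misfit $\half\norm{L^2(\partial\Omega)}{S(u)-y_{meas}}^2$; its treatment is entirely reduced to the continuity result of Proposition \ref{prop:continuity}, whose proof must, however, be checked to carry over to $H^1$ functions valued in $[0,1]$ rather than merely to the characteristic functions in $X_{0,1}$. The concave potential term is rendered harmless only by upgrading the weak $H^1$ convergence to strong $L^2$ convergence via compactness: it is precisely this compactness that rescues an otherwise merely concave (hence weakly upper, not lower, semicontinuous) contribution.
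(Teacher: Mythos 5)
Your proposal is correct and follows essentially the same route as the paper's proof: the direct method, boundedness in $H^1(\Omega)$ from the Ginzburg--Landau term and the constraint $0 \leq u_n \leq 1$, weak $H^1$ compactness upgraded to strong $L^2$ and a.e.\ convergence, weak lower semicontinuity of the gradient term, strong convergence for the concave potential (the paper phrases this via dominated convergence, you via convergence of $\int u_n$ and $\int u_n^2$ --- the same fact), and Proposition \ref{prop:continuity} for the misfit term. Your closing observation that the continuity proof must be checked to extend from $X_{0,1}$ to $\mathcal{K}$ is a fair point the paper leaves implicit, and it does carry over since that argument only uses $|u_n - u| \leq 1$, $L^1$ convergence, and the well-posedness of $S$ on $BV(\Omega;[0,1]) \supset \mathcal{K}$.
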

\begin{proof}
	Fix $\varepsilon > 0$ and consider a minimizing sequence $\{ u_k \} \subset \mathcal{K}$ (we omit the dependence of $u_k$ on $\varepsilon$). By definition of the minimizing sequence, $J_\varepsilon(u_k) \leq M$ indepentently of $k$, which directly implies that also $\norm{L^2(\Omega)}{\nabla u_k}^2$ is bounded. Moreover, being $u_k \in \mathcal{K}$, $0\leq u_k \leq 1$ a.e., thus $\norm{L^2(\Omega)}{u_k}^2 \leq |\Omega|$ and it is possible to conclude that $\{u_k\}$ is bounded in $H^1(\Omega)$. Thanks to weak compactness of $H^1$, there exist $\ue \in H^1(\Omega)$ and a subsequence $\{ u_{k_n} \}$ s.t. $u_{k_n} \xrightharpoonup{H^1} \ue$, hence $u_{k_n} \xrightarrow{L^2} \ue$. The strong $L^2$ convergence implies (up to a subsequence) pointwise convergence a.e., which allows to conclude (together with the dominated convergence theorem, since $u_{k_n}(1-u_{k_n}) \leq 1/2$) that
	$$
	\int_{\Omega} u_{k_n}(1-u_{k_n}) \rightarrow \int_{\Omega} \ue(1-\ue).
	$$
	Moreover, by the lower semicontinuity of the $H^1$ norm with respect to the weak convergence, and by the compact embedding in $L^2$,
	$$
	\begin{aligned}
	\norm{H^1(\Omega)}{\ue}^2 &\leq \liminf_n  \norm{H^1(\Omega)}{u_{k_n}}^2 \\
	\norm{L^2(\Omega)}{\ue}^2 + \norm{L^2(\Omega)}{\nabla \ue}^2 &\leq  \lim_n \norm{L^2(\Omega)}{u_{k_n}}^2 + \liminf_n \norm{L^2(\Omega)}{\nabla u_{k_n}}^2 \\
	\norm{L^2(\Omega)}{\nabla \ue}^2 &\leq \liminf_n \norm{L^2(\Omega)}{\nabla u_{k_n}}^2.
	\end{aligned}
	$$
	Moreover, using the continuity of the solution map $S$ with respect to the $L^1$ convergence, we can conclude that
	$$
	J_\varepsilon(\ue) \leq \liminf_n J_\varepsilon(u_{k_n}).
	$$
	Finally, by pointwise convergence, $0 \leq \ue \leq 1$ a.e., hence $\ue$ is a minimum of $\Je$ in $\mathcal{K}$.
\end{proof}
The asymptotic behaviour of the phase-field problem when $\varepsilon \rightarrow 0$ is investigated in the next two propositions. First, we prove that the relaxed cost functional $J_\varepsilon$ converges to $J_{reg}$ in the sense of the $\Gamma$-convergence, which will naturally entail the convergence of the corresponding minimizers. Before stating the next result, we have to introduce the space $X$ of the Lebesgue-measurable functions over $\Omega$ endowed with the $L^1(\Omega)$ norm and consider the following extension of the cost funtionals: problem \eqref{eq:minreg} is replaced by
\begin{equation}
\argmin_{u \in X} \tilde{J}(u); \quad \tilde{J}(u) = 
\left\{
	\begin{aligned}
	J_{reg}(u) & \quad \textit{if $u \in X_{0,1}=BV(\Omega;\{0,1\})$}\\
	\infty & \quad \textit{otherwise}
	\end{aligned}
\right.
\label{eq:minregbis}
\end{equation}
whereas \eqref{eq:minrel} is replaced by
\begin{equation}
\argmin_{u \in X} \tilde{J}_{\varepsilon}(u); \quad \tilde{J}_{\varepsilon}(u) = 
\left\{
	\begin{aligned}
	J_\varepsilon(u) & \quad \textit{if $u \in \mathcal{K}$}\\
	\infty & \quad \textit{otherwise}
	\end{aligned}
\right.
\label{eq:minrel2}
\end{equation}
It is now possible to formulate the convergence result, whose proof can be easily obtained by adapting the one of \cite[Theorem 6.1]{art:deckelnick}.
\begin{proposition}
	Consider a sequence $\{\varepsilon_k\}$ s.t. $\varepsilon_k \rightarrow 0$. Then, the functionals $\tilde{J}_{\varepsilon_k}$ converge to $\tilde{J}$ in $X$ in the sense of the $\Gamma-$convergence.
\end{proposition}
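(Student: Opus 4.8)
The plan is to decompose the relaxed functional into a Ginzburg--Landau part and the boundary fidelity term, and then to treat the latter as a continuous perturbation of a classical $\Gamma$-convergence result. Precisely, for $u \in \mathcal{K}$ I would write $\tilde{J}_\varepsilon(u) = G(u) + \alpha F_\varepsilon(u)$, where $G(u) = \half \norm{L^2(\partial \Omega)}{S(u)-y_{meas}}^2$ and $F_\varepsilon(u) = \int_\Omega \big(\varepsilon|\nabla u|^2 + \frac{1}{\varepsilon}u(1-u)\big)$. The term $\alpha F_\varepsilon$ is exactly the Modica--Mortola functional associated with the double-well potential $W(s)=s(1-s)$, whose wells sit at the admissible values $0$ and $1$; by the classical Modica--Mortola theorem (in the normalization used in \cite{art:deckelnick}) the functionals $\alpha F_{\varepsilon_k}$ $\Gamma$-converge in the $L^1(\Omega)$ topology to $\alpha\, TV(\cdot)$ restricted to $BV(\Omega;\{0,1\})$ and extended by $+\infty$ elsewhere, and moreover sequences of equibounded $F_{\varepsilon_k}$-energy are precompact in $L^1$. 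The whole point is then that $G$ is continuous with respect to $L^1(\Omega)$ convergence by Proposition \ref{prop:continuity}, so that adding it perturbs neither the $\liminf$ nor the recovery-sequence estimates.

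For the $\liminf$ inequality I would fix $u \in X$ and $u_k \to u$ in $L^1(\Omega)$, and assume without loss of generality that $\liminf_k \tilde{J}_{\varepsilon_k}(u_k)$ is finite and attained along a subsequence; along it the energies are bounded, forcing $u_k \in \mathcal{K}$ and $F_{\varepsilon_k}(u_k)\le C$. From $\int_\Omega u_k(1-u_k) \le C\varepsilon_k \to 0$ together with $0 \le u_k \le 1$ and pointwise convergence (dominated convergence) I obtain $\int_\Omega u(1-u)=0$, hence $u \in \{0,1\}$ a.e.; the Modica--Mortola $\liminf$ inequality then yields $u \in BV(\Omega;\{0,1\})$ with $\alpha\, TV(u) \le \liminf_k \alpha F_{\varepsilon_k}(u_k)$, so that $\tilde{J}(u)$ is finite. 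Since $G(u_k)\to G(u)$ by Proposition \ref{prop:continuity}, I can split the $\liminf$ of the sum and conclude $\tilde{J}(u) = G(u) + \alpha\, TV(u) \le G(u) + \liminf_k \alpha F_{\varepsilon_k}(u_k) = \liminf_k \tilde{J}_{\varepsilon_k}(u_k)$. If instead $u \notin BV(\Omega;\{0,1\})$, the compactness and $\liminf$ properties of Modica--Mortola show that no $L^1$-converging sequence can have bounded energy, so $\tilde J(u)=+\infty \le \liminf_k \tilde J_{\varepsilon_k}(u_k)$ holds trivially.

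For the $\limsup$ (recovery-sequence) inequality the only nontrivial case is $u \in BV(\Omega;\{0,1\})$, where $\tilde{J}(u)<\infty$. Here I would borrow the recovery sequence built for the Modica--Mortola functional in the proof of \cite[Theorem 6.1]{art:deckelnick}: the optimal-profile construction produces $u_k \in \mathcal{K}$ with $u_k \to u$ in $L^1(\Omega)$ and $\alpha F_{\varepsilon_k}(u_k) \to \alpha\, TV(u)$. Because this sequence converges in $L^1$, Proposition \ref{prop:continuity} gives $G(u_k)\to G(u)$, and therefore $\limsup_k \tilde{J}_{\varepsilon_k}(u_k) = G(u) + \alpha\, TV(u) = \tilde{J}(u)$, as required.

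The routine ingredients (the Modica--Mortola $\liminf$ inequality, the optimal-profile recovery sequence, and the compactness of equibounded-energy sequences) are entirely classical and are exactly those invoked in \cite{art:deckelnick}; the only genuinely new point, and the one where the nonlinearity of the state equation enters, is that the misfit term $G$ must be shown to be a \emph{continuous} perturbation in the $L^1$ topology. This is precisely the content of Proposition \ref{prop:continuity}, whose proof already absorbed the difficulty of the cubic reaction term. Consequently the main obstacle is not in the $\Gamma$-convergence argument itself but in having established that continuity beforehand; once it is available, the stability of $\Gamma$-convergence under continuous perturbations makes the adaptation of the linear proof immediate.
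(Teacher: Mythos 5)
Your proof is correct and takes essentially the same route as the paper, whose own ``proof'' is precisely the deferral you carry out: adapt \cite[Theorem 6.1]{art:deckelnick} by combining the classical Modica--Mortola $\Gamma$-convergence of the Ginzburg--Landau term with the $L^1$-to-$L^2(\partial\Omega)$ continuity of the misfit term from Proposition~\ref{prop:continuity}, which is exactly where the nonlinearity of the state equation is absorbed. You merely spell out the liminf, compactness, and recovery-sequence details that the paper leaves to the citation, so nothing further is needed.
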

Finally, from the compactness result in \cite[Proposition 4.1]{baldo1990} and applying the definition of $\Gamma$-convergence, it is easy to prove the following convergence result for the solutions of \eqref{eq:minrel}.
\begin{proposition}
Consider a sequence $\{\varepsilon_k\}$ s.t. $\varepsilon_k \rightarrow 0$ and let $\{u_{\varepsilon_k}\}$ be the sequence of the respective minimizers of the functionals $\{\tilde{J}_{\varepsilon_k}\}$. Then, there exists a subsequence, still denoted as $\{\varepsilon_k\}$ and a function $u \in X_{0,1}$ such that $u_{\varepsilon_k} \rightarrow u$ in $L^1$ and $u$ is a solution of \eqref{eq:minreg}.
	\label{prop:conv2}
\end{proposition}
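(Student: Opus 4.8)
The plan is to invoke the fundamental theorem of $\Gamma$-convergence, which states precisely that minimizers of a $\Gamma$-converging sequence of functionals, when they enjoy suitable compactness, converge to a minimizer of the $\Gamma$-limit. Since the previous proposition already establishes that $\tilde{J}_{\varepsilon_k} \xrightarrow{\Gamma} \tilde{J}$ in $X$, the only two ingredients left to supply are: a compactness argument producing an $L^1$-convergent subsequence of the minimizers $\{u_{\varepsilon_k}\}$ whose limit lies in $X_{0,1}$, and the standard optimality comparison based on the $\liminf$ and $\limsup$ inequalities.

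First I would secure a uniform energy bound. Fix any competitor $v \in X_{0,1}$ with $\tilde{J}(v) < \infty$, for instance $v = \chi_{\omega^*}$ with $\omega^* \subset \Omega$ a smooth subdomain, so that $TV(v) < \infty$ and, by well-posedness of the solution map (Proposition \ref{prop:wellpos}), the misfit term is finite as well. By the $\limsup$ part of $\Gamma$-convergence there exists a recovery sequence $v_k \to v$ in $L^1$ with $\limsup_k \tilde{J}_{\varepsilon_k}(v_k) \le \tilde{J}(v)$. Since each $u_{\varepsilon_k}$ minimizes $\tilde{J}_{\varepsilon_k}$ (such minimizers exist by Proposition \ref{prop:exist2}), it follows that $\tilde{J}_{\varepsilon_k}(u_{\varepsilon_k}) \le \tilde{J}_{\varepsilon_k}(v_k) \le \tilde{J}(v) + 1$ for $k$ large, so the energies are bounded uniformly in $k$. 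In particular the Ginzburg-Landau part $\int_{\Omega}(\varepsilon_k|\nabla u_{\varepsilon_k}|^2 + \frac{1}{\varepsilon_k}u_{\varepsilon_k}(1-u_{\varepsilon_k}))$ is bounded, and $0 \le u_{\varepsilon_k} \le 1$ gives a bound in $L^1$.

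Second, I would apply the compactness result of \cite[Proposition 4.1]{baldo1990}: the uniform bound on the Modica-Mortola energy yields, up to a subsequence, $u_{\varepsilon_k} \to u$ in $L^1(\Omega)$ with $u \in BV(\Omega;\{0,1\}) = X_{0,1}$. This is the crucial point, since it is exactly the penalization $\frac{1}{\varepsilon_k}u(1-u)$ that forces the limit to be the characteristic function of a set of finite perimeter, and it is the step I expect to be the main obstacle; here, however, it is entirely delegated to the cited Modica-Mortola-type compactness theorem, so no new work is required.

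Finally, with $u_{\varepsilon_k} \to u$ in the topology of $X$, the $\liminf$ inequality gives $\tilde{J}(u) \le \liminf_k \tilde{J}_{\varepsilon_k}(u_{\varepsilon_k})$. For an arbitrary $w \in X$, the $\limsup$ inequality furnishes a recovery sequence $w_k \to w$ with $\limsup_k \tilde{J}_{\varepsilon_k}(w_k) \le \tilde{J}(w)$; combining this with the minimality $\tilde{J}_{\varepsilon_k}(u_{\varepsilon_k}) \le \tilde{J}_{\varepsilon_k}(w_k)$ and taking $\liminf$ on the left and $\limsup$ on the right yields $\tilde{J}(u) \le \tilde{J}(w)$ for every $w \in X$. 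Hence $u$ minimizes $\tilde{J}$, and since $\tilde{J}$ is finite only on $X_{0,1}$, where it coincides with $J_{reg}$, the limit $u$ belongs to $X_{0,1}$ and is a solution of \eqref{eq:minreg}.
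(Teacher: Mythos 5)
Your proposal is correct and follows exactly the route the paper indicates: the paper gives no written proof but states that the result follows from the compactness result in \cite[Proposition 4.1]{baldo1990} together with the definition of $\Gamma$-convergence, which is precisely your argument (uniform energy bound via a recovery sequence and minimality, Modica--Mortola compactness to get an $L^1$-convergent subsequence with limit in $X_{0,1}$, then the $\liminf$/$\limsup$ comparison to identify the limit as a minimizer of $\tilde J$, hence of \eqref{eq:minreg}). You have simply spelled out the standard details the authors left implicit, and all steps check out.
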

%

\subsection{Optimality conditions}
\label{sec:oc}
We can now provide an expression for the optimality condition associated with the minimization problem \eqref{eq:minrel}, which is formulated as a variational inequality involving the Fréchet derivative of $\Je$.
\begin{proposition}
	Consider the solution map $S:\mathcal{K} \rightarrow H^1(\Omega)$ and let $f \in L^2(\Omega)$ satisfy the hypotheses in Proposition \ref{prop:geq} or \ref{prop:geq2}: for every $\varepsilon>0$, the operators $S$ and $\Je$ are Fréchet-differentiable on $\mathcal{K} \subset L^{\infty}(\Omega) \cap H^1(\Omega)$ and a minimizer $\ue$ of $\Je$ satisfies the variational inequality: 
	\begin{equation}
	J'_\varepsilon(u_\varepsilon)[v - u_\varepsilon] \geq 0 \qquad \forall v \in \mathcal{K},
	\label{eq:OC}
	\end{equation}
	being
	\begin{equation}
		J'_\varepsilon(u)[\vartheta] = \int_{\Omega}(1-k)\vartheta \nabla S(u) \cdot \nabla p + \int_{\Omega} \vartheta S(u)^3 p +  2 \alpha \varepsilon \int_{\Omega}\nabla u \cdot \nabla \vartheta  + \frac{\alpha}{\varepsilon}	\int_{\Omega}(1-2u)\vartheta;
	\label{eq:VI}
	\end{equation}
where $\vartheta \in \mathcal{K}-\mathcal{K}=\{v \ s.t. \ v=u_1-u_2, \ u_1,u_2 \in \mathcal{K}\}$ and $p$ is the solution of the \textit{adjoint problem}:  
\begin{equation}
	\int_{\Omega} a(u)\nabla p \cdot \nabla \psi + \int_{\Omega} 3b(u)S(u)^2 p \psi = \int_{\partial \Omega}{(S(u)-y_{meas})\psi} \qquad \forall \psi \in H^1(\Omega).
\label{eq:adjoint}
\end{equation}
\label{prop:optcond}
\end{proposition}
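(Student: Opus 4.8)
The plan is to prove the four assertions in sequence: Fréchet-differentiability of the state map $S$, the chain rule giving a first form of $\Je'$, its rewriting through the adjoint state $p$, and finally the variational inequality. The guiding principle is that the bilinear form governing the linearized state equation is symmetric and coercive, and coincides with the one appearing in the adjoint problem \eqref{eq:adjoint}; this symmetry is what makes the representation \eqref{eq:VI} possible.

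First I would linearize the state equation. Fixing $u \in \mathcal{K}$ and an admissible direction $\vartheta$, and recalling that $a'(u)[\vartheta] = -(1-k)\vartheta$ and $b'(u)[\vartheta] = -\vartheta$, the natural candidate $y' = S'(u)[\vartheta]$ solves
\[
\int_{\Omega} a(u)\nabla y' \cdot \nabla \varphi + \int_{\Omega} 3 b(u) S(u)^2 y' \varphi = (1-k)\int_{\Omega} \vartheta \, \nabla S(u)\cdot\nabla\varphi + \int_{\Omega} \vartheta \, S(u)^3 \varphi \qquad \forall \varphi \in H^1(\Omega).
\]
Well-posedness of this problem follows from Lax--Milgram: the left-hand bilinear form is coercive on $H^1(\Omega)$ since $a(u)\ge k>0$ and $3b(u)S(u)^2\ge 0$ (combined with the Poincaré-type estimate of Lemma \ref{Lemma1}), while the right-hand side is a bounded functional because $S(u)\in H^1(\Omega)\hookrightarrow L^6(\Omega)$ and $\vartheta\in L^\infty(\Omega)$. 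To upgrade this candidate to a genuine Fréchet derivative I would control the remainder $r = S(u+\vartheta)-S(u)-y'$: writing the equation solved by $w = S(u+\vartheta)-S(u)$ as in the proof of Proposition \ref{prop:continuity} and subtracting the linearized equation, $r$ satisfies a problem of the same coercive type whose right-hand side collects only terms quadratic in $\vartheta$ and $w$; using the Lipschitz bound $\norm{H^1(\Omega)}{w}\le C\norm{L^\infty(\Omega)\cap H^1(\Omega)}{\vartheta}$ extracted from the estimates of Proposition \ref{prop:continuity}, one obtains $\norm{H^1(\Omega)}{r} = O(\norm{}{\vartheta}^2) = o(\norm{}{\vartheta})$, so $S'(u)$ is the Fréchet derivative.

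The chain rule then yields Fréchet-differentiability of $\Je$ and, writing $y=S(u)$,
\[
\Je'(u)[\vartheta] = \int_{\partial\Omega}(y-y_{meas})\,S'(u)[\vartheta] + 2\alpha\varepsilon\int_{\Omega} \nabla u\cdot\nabla\vartheta + \frac{\alpha}{\varepsilon}\int_{\Omega} (1-2u)\vartheta,
\]
using continuity of the trace operator for the first term and a direct computation of the Ginzburg--Landau part. The term carrying $S'(u)[\vartheta]$ is then eliminated by the adjoint trick. The adjoint problem \eqref{eq:adjoint} is well-posed by the same Lax--Milgram argument, its bilinear form being exactly the symmetric form above and its right-hand side bounded by trace continuity. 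Testing \eqref{eq:adjoint} with $\psi = y'=S'(u)[\vartheta]$ and the linearized equation with $\varphi = p$, the two left-hand sides coincide by symmetry, so that
\[
\int_{\partial\Omega}(y-y_{meas})\,S'(u)[\vartheta] = (1-k)\int_{\Omega} \vartheta\,\nabla S(u)\cdot\nabla p + \int_{\Omega} \vartheta\, S(u)^3 p,
\]
and substituting this identity into the previous display reproduces \eqref{eq:VI} verbatim.

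Finally, the variational inequality \eqref{eq:OC} follows from convexity of $\mathcal{K}$: for a minimizer $\ue$ and any $v\in\mathcal{K}$ the segment $\ue + t(v-\ue)$ belongs to $\mathcal{K}$ for $t\in[0,1]$, so the scalar map $t\mapsto \Je(\ue + t(v-\ue))$ is minimized at $t=0$ and its right derivative $\Je'(\ue)[v-\ue]$ is nonnegative. I expect the main obstacle to be the remainder estimate of the second step, where the cubic reaction term forces careful control of the quadratic error contributions and where one must exploit the $L^\infty$-bound on $\mathcal{K}$ to close the estimates; once this differentiability is in place, the symmetry of the linearized bilinear form makes the adjoint computation and the passage to the variational inequality essentially routine.
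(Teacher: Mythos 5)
Your proposal follows essentially the same route as the paper's proof: the same linearized equation characterizing $S'(u)[\vartheta]$, the same two-step remainder argument (first the Lipschitz bound $\norm{H^1(\Omega)}{S(u+\vartheta)-S(u)}\le C\norm{L^\infty(\Omega)}{\vartheta}$, then the quadratic estimate on $r=S(u+\vartheta)-S(u)-S'(u)[\vartheta]$), the same adjoint-symmetry identity to eliminate $S'(u)[\vartheta]$, and the standard convexity argument for the variational inequality. One small imprecision: coercivity of the linearized and adjoint bilinear forms does not follow from $3b(u)S(u)^2\ge 0$ alone (for the Neumann problem, constants defeat the gradient term), but requires the strict lower bound $b(u)S(u)^2\ge Q$ on a subset $\Omega^*$ of positive measure supplied by Proposition \ref{prop:geq} — this is exactly why the hypotheses on $f$ appear in the statement — after which Lemma \ref{Lemma1} closes the $H^1$ estimate, as in the paper.
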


\begin{proof}
First of all we need to prove that $S$ is Fréchet differentiable in $L^{\infty}(\Omega)$: in particular, we claim that for $\vartheta \in L^{\infty}(\Omega)\cap (\mathcal{K}-\mathcal{K})$ it holds that $S'(u)[\vartheta] = S_*$, being $S_*$ the solution in $H^1(\Omega)$ of
\begin{equation}
	\int_{\Omega} a(u)\nabla S_* \nabla \varphi + \int_{\Omega} b(u) 3S(u)^2 S_* \varphi = \int_{\Omega} (1-k)\vartheta \nabla S \nabla \varphi + \int_{\Omega} \vartheta S(u)^3 \varphi \quad \forall \varphi \in H^1(\Omega),
	\label{eq:derprob}
\end{equation} 
namely, that
\begin{equation}
	\norm{H^1(\Omega)}{S(u+\vartheta) - S(u) - S_*} = o(\norm{L^{\infty}(\Omega)}{\vartheta}).
\label{eq:Frechet}
\end{equation}
First we show that if $\vartheta \in L^{\infty}(\Omega)\cap (\mathcal{K}-\mathcal{K})$, then $\norm{H^1(\Omega)}{S(u+\vartheta)-S(u)} \leq C \norm{L^{\infty}(\Omega)}{\vartheta}$. 
Indeed, the difference $w = S(u+\vartheta)-S(u)$ satisfies
\begin{equation}
\begin{aligned}
	\int_{\Omega} a(u+\vartheta)\nabla w \nabla \varphi + \int_{\Omega} b(u+\vartheta) q w \varphi =& -\int_{\Omega} (a(u+\vartheta)-a(u))\nabla S(u) \nabla \varphi \\
	&- \int_{\Omega} (b(u+\vartheta)-b(u)) S(u)^3 \varphi \qquad \forall \varphi \in H^1(\Omega),
	\end{aligned}
\label{eq:difference}
\end{equation}
with $q = S(u+\vartheta)^2 + S(u)S(u+\vartheta) +S(u)^2$. Since $a(u+\vartheta)-a(u)= -(1-k)\vartheta$ and $b(u+\vartheta)-b(u) = -\vartheta$, and arguing as in the proof of Proposition \ref{prop:continuity}, take $\varphi = w$ in \eqref{eq:difference}: we obtain
\[
k\norm{L^2}{\nabla w}^2 + \frac{3}{4}\int_\Omega b(u+\vartheta)S(u)^2w^2 \leq (1-k)\norm{L^\infty}{\vartheta} \norm{L^2}{\nabla S(u)} \norm{L^2}{\nabla w} + \norm{L^2}{S(u)^3} \norm{L^2}{w}\norm{L^{\infty}}{\vartheta}
\]
and again by Proposition \ref{prop:geq}
\[
\begin{aligned}
k \norm{L^2}{\nabla w}^2 + \frac{3}{4}Q \norm{L^2(\Omega^*)}{w}^2 \leq &(1-k)\norm{L^\infty}{\vartheta} \norm{L^2}{\nabla S(u)} \norm{L^2}{\nabla w} + \norm{L^\infty}{\vartheta} \norm{L^2}{S(u)^3}\norm{L^2}{w} \\&+ \frac{3}{4} \norm{L^\infty}{\vartheta} \norm{L^2}{S(u)^2}\norm{L^2}{w}.
\end{aligned}
\]
By \eqref{eq:Poincare} and by Sobolev inequality, eventually
\[
	\norm{H^1(\Omega)}{w}^2 \leq C \norm{H^1(\Omega)}{S(u)}\norm{H^1(\Omega)}{w}\norm{L^\infty}{\vartheta},
\]
hence $\norm{H^1(\Omega)}{S(u+\vartheta)-S(u)} = O(\norm{L^\infty(\Omega)}{\vartheta})$.
\par
Take now \eqref{eq:difference} and subtract \eqref{eq:derprob}. Define $r = S(u+\vartheta)-S(u) - S_*$: it holds that
\[
\begin{aligned}
	\int_{\Omega} a(u)\nabla r \nabla \varphi + \int_{\Omega} b(u) 3 S(u)^2 r \varphi =& \int_{\Omega} (a(u + \vartheta) - a(u))\nabla w \cdot \nabla \varphi \\
	&+ \int_{\Omega} (b(u+\vartheta)q - 3b(u)S(u)^2 )w \varphi \qquad \forall \varphi \in H^1(\Omega). 
\end{aligned}
\]	
The second integral in the latter sum can be split as follows:
\[
\int_{\Omega} (b(u+\vartheta)q - 3b(u)S(u)^2 )w \varphi = \int_{\Omega} (b(u+\vartheta)-b(u))q w \varphi + \int_{\Omega} (q - 3S(u)^2 )b(u)w \varphi,
\]
and in particular $q - 3S(u)^2 = S(u+\vartheta)^2 + S(u+\vartheta)S(u) - 2S(u)^2 = hw$, where $h = S(u+\vartheta)+2S(u) \in H^1(\Omega)$. Hence, chosen $\varphi = r$ and exploiting again the Poincaré inequality in Lemma \ref{Lemma1} and the H{\"o}lder inequality:
\[
\begin{aligned}
	\frac{1}{C}\norm{H^1}{r}^2 \leq &  k\norm{L^2}{\nabla r}^2 + Q \norm{L^2(\Omega^*)}{r} \leq (1-k)\norm{L^{\infty}}{\vartheta} \norm{L^2}{\nabla w}\norm{L^2}{\nabla r} \\
	& + \norm{L^{\infty}}{\vartheta} \norm{L^4}{q}\norm{L^2}{w}\norm{L^4}{r} +  \norm{L^4}{h}\norm{L^4}{w}^2 \norm{L^4}{r}\\
	& \leq \left(  (1-k)\norm{L^{\infty}}{\vartheta}\norm{H^1}{w} + \norm{H^1}{q}\norm{L^{\infty}}{\vartheta}\norm{H^1}{w} +  \norm{H^1}{h}\norm{H^1}{w}^2 \right)\norm{H^1}{r}.
\end{aligned}
\]
It follows eventually that $\norm{H^1(\Omega)}{r} \leq C \norm{L^{\infty}}{\vartheta}^2 = o(\norm{L^{\infty}}{\vartheta})$, which guarantees that $S_*=S'(u)[\vartheta]$.

The last step is to provide an expression of the Fréchet derivative of $\Je$. Exploiting the fact that $S$ is differentiable, we can compute the expression of $\Je'(u)$ through the \textit{chain rule}:
\begin{equation}
	\Je'(u)[\vartheta] = \int_{\partial \Omega} (S(u)-y_0) S'(u)[\vartheta] + \alpha \int_{\Omega}\left( 2\varepsilon \nabla u \nabla \vartheta + \frac{1}{\varepsilon}(1-2u)\vartheta \right).
\label{eq:first}
\end{equation}
Finally, thanks to the expression of the adjoint problem,
\[
\begin{aligned}
	\int_{\partial \Omega} (S(u)-y_0) S'(u)[\vartheta] =& \int_{\partial \Omega} (S(u)-y_0)S_*  = \int_{\Omega} a(u)\nabla p \cdot \nabla S_* + \int_{\Omega} 3S(u)^2 p S_* =  \\ 
		\textit{(by definition of $S_*$)} =& \int_{\Omega}(1-k)\vartheta \nabla S(u) \cdot\nabla p + \int_{\Omega} \vartheta S(u)^3p ,
\end{aligned}
\]
and hence:
\[
 \Je'(u)[\vartheta] = \int_{\Omega}(1-k)\vartheta \nabla S(u)\cdot \nabla p + \int_{\Omega} \vartheta S(u)^3p + \alpha \int_{\Omega}\left( 2\varepsilon \nabla u \cdot \nabla \vartheta + \frac{1}{\varepsilon}(1-2u)\vartheta \right).
\] 
It is eventually a standard argument that, being $\Je$ a continuous and Frechét differentiable functional on a convex subset $\mathcal{K}$ of the Banach space $H^1(\Omega)$, the optimality conditions for the optimization problem \eqref{eq:minrel} are expressed by the variational inequality \eqref{eq:OC}.
\end{proof}

\section{Discretization and reconstruction algorithm}
\label{sec:discr}
For a fixed $\varepsilon>0$, we now introduce a discrete formulation of problem \eqref{eq:minrel} in order to define a numerical reconstruction algorithm and compute an approximated solution of the inverse problem. 

\par In what follows, we consider $\Omega$ to be polygonal, in order to avoid a discretization error involving the geometry of the domain. Let $\Tau_h$ be a shape regular triangulation of $\Omega$ and define $V_h \subset H^1(\Omega)$:
\[
	V_h = \{ v_h \in C(\bar{\Omega}), v_h|_K \in \mathbb{P}_1(K) \text{ } \forall K \in \Tau_h \}; \qquad \mathcal{K}_h = V_h \cap \mathcal{K}.
\]
It is well known, from the Clément interpolation theory (see e.g. \cite{book:brenner2007}), that for every $w \in H^1(\Omega)$ there exists a sequence $\{w_h\}$ such that 
\begin{equation}
w_h \in V_h \qquad w_h \xrightarrow{H^1} w \qquad \text{as $h \rightarrow 0$}. 
\label{eq:Clement}
\end{equation}
For every fixed $h>0$, we define the solution map $S_h: \mathcal{K} \rightarrow V_h$, where $S_h(u)$ solves
\[
\int_{\Omega} a(u) \nabla S_h(u) \nabla v_h + \int_{\Omega} b(u) S_h(u)^3 v_h = \int_{\Omega} f_h v_h \quad v_h \in V_h,
\]
being $f_h$ the Clément interpolator of $f$ in the space $V_h$, hence $\norm{H^1(\Omega)}{f_h - f} \rightarrow 0$.

\subsection{Convergence analysis as $h \rightarrow 0$}
The present section is devoted to the numerical analysis of the discretized problem: the convergence of the approximated solution of the direct problem is studied, taking into account the difficulties implied by the non linear term. Moreover, the existence and convergence of minimizers of the discrete cost functional is analysed.
The following result, which is preliminary for the proof of the convergence of the approximated solutions to the exact one, can be proved by resorting at the techniques of \cite[Theorem 2.1]{art:ciarlet}. For completeness we briefly report the proof.
\begin{lemma}
Let $f \in L^2(\Omega)$ satisfy the hypotheses in Proposition \ref{prop:geq} or \ref{prop:geq2}; then, for every $u \in \mathcal{K}$, $S_h(u) \rightarrow S(u)$ strongly in $H^1(\Omega)$.
\label{prop:ciarlet}
\end{lemma}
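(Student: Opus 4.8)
The plan is to use the classical Galerkin/compactness argument for nonlinear elliptic problems, in three stages: a uniform a priori bound on the discrete solutions, extraction of a weak limit which is identified through the variational formulation as the continuous solution $S(u)$, and finally an energy argument upgrading weak to strong $H^1$-convergence.

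First I would establish a uniform-in-$h$ bound. Testing the discrete equation with $v_h = S_h(u)$ and using $a(u) \geq k$ together with $b(u) = 1-u \geq 0$ gives
\[
k\norm{L^2(\Omega)}{\nabla S_h(u)}^2 + \int_\Omega b(u) S_h(u)^4 \leq \norm{L^2(\Omega)}{f_h}\,\norm{L^2(\Omega)}{S_h(u)}.
\]
Since $\norm{L^2(\Omega)}{f_h}$ is bounded (as $f_h \to f$ in $H^1$), arguing as in the well-posedness analysis of Proposition \ref{prop:wellpos} — i.e.\ exploiting the nonnegative quartic term together with the generalized Poincaré inequality of Lemma \ref{Lemma1} under the hypotheses on $f$ — one deduces $\norm{H^1(\Omega)}{S_h(u)} \leq C$ with $C$ independent of $h$. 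By weak compactness of $H^1(\Omega)$ there exist a subsequence (not relabelled) and $y^\ast \in H^1(\Omega)$ with $S_h(u) \rightharpoonup y^\ast$ in $H^1$, and by the compact embedding $H^1(\Omega)\subset\subset L^4(\Omega)$ (valid in dimension two) also $S_h(u)\to y^\ast$ strongly in $L^4(\Omega)$ and pointwise a.e.\ along a further subsequence.

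Next I would identify $y^\ast = S(u)$. Fix any $\varphi \in H^1(\Omega)$ and let $\varphi_h$ be its Clément interpolant, so that $\varphi_h \to \varphi$ in $H^1$ by \eqref{eq:Clement}. Inserting $\varphi_h$ into the discrete equation and letting $h\to 0$, the diffusion term passes to the limit because $\nabla S_h(u)\rightharpoonup \nabla y^\ast$ weakly while $a(u)\nabla\varphi_h \to a(u)\nabla\varphi$ strongly in $L^2$; the reaction term passes to the limit because $S_h(u)^3 \to (y^\ast)^3$ strongly (the strong $L^4$-convergence of $S_h(u)$ yields convergence of the cubes in $L^{4/3}$) paired against $b(u)\varphi_h$ converging in $L^4$; and the right-hand side converges since $f_h\to f$ and $\varphi_h \to \varphi$ in $L^2$. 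Hence $y^\ast$ solves \eqref{eq:prob}, so $y^\ast = S(u)$ by the uniqueness in Proposition \ref{prop:wellpos}. As the limit is independent of the subsequence, the whole family satisfies $S_h(u)\rightharpoonup S(u)$ in $H^1$ and $S_h(u)\to S(u)$ in $L^4$. I would then upgrade to strong $H^1$-convergence through convergence of energies: testing the discrete problem with $S_h(u)$ and the continuous problem \eqref{eq:prob} with $\varphi = S(u)$, and passing to the limit on the right-hand sides, gives
\[
\lim_{h\to 0}\left(\int_\Omega a(u)|\nabla S_h(u)|^2 + \int_\Omega b(u)S_h(u)^4\right) = \int_\Omega a(u)|\nabla S(u)|^2 + \int_\Omega b(u) S(u)^4.
\]
Since $\int_\Omega b(u)S_h(u)^4 \to \int_\Omega b(u)S(u)^4$ by strong $L^4$-convergence, it follows that $\int_\Omega a(u)|\nabla S_h(u)|^2 \to \int_\Omega a(u)|\nabla S(u)|^2$, i.e.\ the weighted gradient norm converges; since $k \leq a(u) \leq 1$ makes the weighted inner product equivalent to the standard one, combining this with $\nabla S_h(u)\rightharpoonup \nabla S(u)$ yields strong convergence of the gradients, and with $S_h(u)\to S(u)$ in $L^2$ this gives $S_h(u)\to S(u)$ strongly in $H^1(\Omega)$.

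The main obstacle is the cubic reaction term: unlike a linear problem, one cannot pass to the limit in $\int_\Omega b(u)S_h(u)^3\varphi_h$ by weak convergence alone, and the argument rests on extracting genuine strong ($L^4$ and pointwise) compactness of $S_h(u)$ from the uniform $H^1$-bound via the two-dimensional Sobolev embedding. The same nonlinearity also complicates the a priori estimate, where the quartic term must be controlled jointly with the degenerate Neumann diffusion (hence the recourse to Lemma \ref{Lemma1}); pleasingly, it is precisely the strong $L^4$-convergence that makes the final energy identity close.
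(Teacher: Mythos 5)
Your proof is correct, and it takes a genuinely different route from the paper's. The paper argues through the monotone operator $T(y)$ defined by $\langle T(y),\varphi\rangle = \int_\Omega a(u)\nabla y\cdot\nabla\varphi + \int_\Omega b(u)y^3\varphi$ and proves a C\'ea-type quasi-optimality estimate: the strong monotonicity bound \eqref{eq:ellipticity}, $\langle T(y_h)-T(y),y_h-y\rangle \geq C\norm{H^1}{y_h-y}^2$ (which rests on Proposition \ref{prop:geq} via $y_h^2+y_hy+y^2\geq \tfrac34 y^2$ and on Lemma \ref{Lemma1}), combined with the local Lipschitz continuity of $T$ from Proposition \ref{prop:wellpos} and a Galerkin-orthogonality splitting (leaving only the data term $\int_\Omega(f_h-f)(y_h-w_h)$), yields $\norm{H^1(\Omega)}{S_h(u)-S(u)}\leq C\inf_{w_h\in V_h}\norm{H^1(\Omega)}{w_h-S(u)}$, after which \eqref{eq:Clement} concludes. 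You instead run the soft compactness scheme: uniform bound from coercivity, weak limit, identification via strong $L^4$ compactness of the cubic term plus uniqueness, and an energy identity to upgrade to strong $H^1$ convergence. Each approach buys something: the paper's estimate is quantitative --- it gives convergence rates whenever $S(u)$ has extra regularity --- and is entirely subsequence-free; your argument is purely qualitative but needs strictly less, since the identification and energy steps use only coercivity, uniqueness, and the compact embedding $H^1(\Omega)\subset\subset L^4(\Omega)$. In particular, you never invoke the nondegeneracy $b(u)S(u)^2\geq Q$ of Proposition \ref{prop:geq} (the hypothesis on $f$): your a priori bound needs only $b(u)\geq Q$ on a set of positive measure, i.e.\ $u$ not a.e.\ equal to $1$, exactly as in the coercivity computation of Proposition \ref{prop:wellpos}, whereas the paper's strong monotonicity constant $C(k,y)$ genuinely depends on it. So your proof is valid under nominally weaker hypotheses, at the price of losing the quasi-optimality estimate that the paper's operator-theoretic route provides.
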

\begin{proof}
As in the proof of Proposition \ref{prop:wellpos}, for a fixed $u \in \mathcal{K}$ we define the operator $T:H^1(\Omega) \rightarrow (H^1(\Omega))^*$ such that 
\[
	\langle T(y), \varphi \rangle = \int_{\Omega} a(u)\nabla y \nabla \varphi + \int_{\Omega} b(u) y^3 \varphi;
\]
then $y_h = S_h(u)$ and $y = S(u)$ are respectively the solutions of the equations
\[
\begin{aligned}
 \langle T(y_h), \varphi \rangle &= \int_\Omega f_h \varphi \quad & \forall \varphi \in V_h; \qquad \qquad
 \langle T(y), \varphi \rangle &= \int_\Omega f \varphi \quad & \forall \varphi \in H^1(\Omega).
\end{aligned}
\]
It is easy to prove that 
\begin{equation}
	\langle T(y_h) - T(y), y_h - y \rangle \geq C(k,y) \norm{H^1}{y_h-y}^2;
\label{eq:ellipticity}
\end{equation}
indeed, thanks to Lemma \ref{Lemma1} and Proposition \ref{prop:geq},
\[
	\langle T(y_h) - T(y), y_h - y \rangle = \int_\Omega a(u) |\nabla (y_h - y)|^2 + \int_\Omega b(u)(y_h - y)^2 (y_h^2 + y_h y + y^2) \geq C \norm{H^1}{y_h - y}^2.
\]
Thanks to \eqref{eq:ellipticity}, consider a generic $w_h \in V_h$,
\[
\begin{aligned}
\norm{H^1}{y_h - y}^2 &\leq \langle T(y_h) - T(y), y_h - y \rangle = \langle T(y_h) - T(y), w_h - y \rangle + \langle T(y_h) - T(y), y_h - w_h \rangle \\
& \leq \langle T(y_h) - T(y), w_h - y \rangle + \int_\Omega (f_h - f)(y_h - w_h) \\
& \leq K \norm{H^1}{w_h - y} \norm{H^1}{y_h - y} + \norm{H^1}{f_h - f}\norm{H^1}{y_h - w_h},
\end{aligned}
\]
where $K$ is the (local) Lipschitz constant of $T$ (see Proposition \ref{prop:wellpos}). Hence:
\[
\begin{aligned}
\norm{H^1}{y_h - y} &\leq K \norm{H^1}{w_h - y} + \half \sqrt{K^2 \norm{H^1}{w_h - y}^2 + \norm{H^1}{y_h - w_h}^2 \norm{H^1}{f_h - f}^2} \\
	&\leq C_1 \norm{H^1}{w_h - y} + C_2 \norm{H^1}{y_h - w_h} \norm{H^1}{f_h - f}\\
	&\leq (C_1 + C_2 \norm{H^1}{f_h - f})\norm{H^1}{w_h - y} + C_2 \norm{H^1}{y_h - y} \norm{H^1}{f_h - f}.
\end{aligned}
\]
Since $f_h \xrightarrow{H^1} f$, we can choose $h$ sufficiently small s.t. $C_2 \norm{H^1}{f_h - f} \leq \half$, hence:
\[
\norm{H^1}{y_h - y} \leq 2 \left(C_1 + \half \right) \norm{H^1}{w_h - y},
\]
and since the latter inequality holds for each $w_h \in H^1(\Omega)$, it holds:
\[
\norm{H^1(\Omega)}{y_h - y} \leq C \inf_{w_h \in V_h}\norm{H^1(\Omega)}{w_h - y}. 
\]
Finally, exploiting \eqref{eq:Clement}, we conclude the thesis.
\end{proof}

The convergence of the solution of the discrete direct problem to the continuous one is an immediate consequence of Lemma \ref{prop:ciarlet} and of the continuity of the map $S_h$ in the space $V_h$, which can be assessed analogously to the proof of Proposition \ref{prop:continuity}.
\begin{proposition}
	Let $\{h_k\}, \{u_k\}$ be two sequences such that $h_k \rightarrow 0$, $u_k \in \mathcal{K}_{h_k}$ and $u_k \xrightarrow{L^1} u$ and $u$ is not identically equal to 1. Then $S_{h_k}(u_k) \xrightarrow{H^1} S(u)$.
\label{cor:1}
\end{proposition}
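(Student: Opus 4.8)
The plan is to interpolate between the two regimes by inserting the intermediate quantity $S_{h_k}(u)$, writing
\[
S_{h_k}(u_k) - S(u) = \bigl(S_{h_k}(u_k) - S_{h_k}(u)\bigr) + \bigl(S_{h_k}(u) - S(u)\bigr),
\]
and controlling the two summands by the two ingredients named in the statement. Since $u_k \in \mathcal{K}_{h_k} \subset L^{\infty}(\Omega;[0,1])$ and $u_k \xrightarrow{L^1} u$, the limit $u$ lies in $L^{\infty}(\Omega;[0,1])$, and the hypothesis that $u$ is not identically $1$ guarantees that $b(u)=1-u$ does not vanish identically, so that Proposition \ref{prop:geq} applies to $S(u)$ and all the maps $S_{h_k}(u)$, $S(u)$ are well defined. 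For the second summand, Lemma \ref{prop:ciarlet} applied to the \emph{fixed} argument $u$ gives $S_{h_k}(u) \to S(u)$ strongly in $H^1(\Omega)$ as $h_k\to 0$. It remains to prove that the first summand $\tilde w_k \defeq S_{h_k}(u_k)-S_{h_k}(u)$ tends to $0$ in $H^1(\Omega)$; this is a discrete continuity estimate in the spirit of Proposition \ref{prop:continuity}, the only genuinely new feature being that it must hold uniformly in the mesh level $h_k$.

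First I would derive the equation solved by $\tilde w_k$. Subtracting the discrete problem defining $S_{h_k}(u)$ from the one defining $S_{h_k}(u_k)$ (both posed on the same space $V_{h_k}$), and using $a(u_k)-a(u)=-(1-k)(u_k-u)$, $b(u_k)-b(u)=-(u_k-u)$ together with the factorization $S_{h_k}(u_k)^3-S_{h_k}(u)^3=q_k\tilde w_k$, where $q_k=S_{h_k}(u_k)^2+S_{h_k}(u_k)S_{h_k}(u)+S_{h_k}(u)^2\geq \frac{3}{4}S_{h_k}(u)^2$, one obtains for every $v_h\in V_{h_k}$
\[
\int_\Omega a(u_k)\nabla \tilde w_k\cdot \nabla v_h+\int_\Omega b(u_k)q_k\tilde w_k v_h=\int_\Omega (1-k)(u_k-u)\nabla S_{h_k}(u)\cdot\nabla v_h+\int_\Omega (u_k-u)S_{h_k}(u)^3 v_h.
\]
Testing with $v_h=\tilde w_k\in V_{h_k}$ and using $a(u_k)\geq k$, $b(u_k)\geq 0$ reproduces exactly the structure of the energy estimate in Proposition \ref{prop:continuity}.

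The main obstacle is to bound the zeroth-order term from below \emph{uniformly in $k$}, since here both the coefficient $b(u_k)$ and the profile $S_{h_k}(u)$ vary with $k$. By Proposition \ref{prop:geq} there exist $Q>0$ and $\Omega^*\subset\Omega$ with $|\Omega^*|\neq 0$ such that $b(u)S(u)^2\geq Q$ on $\Omega^*$. Working along an arbitrary subsequence, refined so that $u_k\to u$ a.e.\ and, by Lemma \ref{prop:ciarlet} and the embedding $H^1(\Omega)\hookrightarrow L^2(\Omega)$, also $S_{h_k}(u)\to S(u)$ a.e., I have $b(u_k)S_{h_k}(u)^2\to b(u)S(u)^2$ a.e.\ on $\Omega^*$; by Egorov's theorem this convergence is uniform on some $\Omega^{**}\subset\Omega^*$ with $|\Omega^{**}|>0$, so $b(u_k)S_{h_k}(u)^2\geq \frac{Q}{2}$ on $\Omega^{**}$ for all large $k$. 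Denoting by $E_k$ the left-hand side of the tested identity, the bound $b(u_k)q_k\geq\frac{3}{4}b(u_k)S_{h_k}(u)^2$ together with $a(u_k)\geq k$ gives $E_k\geq k\norm{L^2(\Omega)}{\nabla\tilde w_k}^2+\frac{3}{8}Q\norm{L^2(\Omega^{**})}{\tilde w_k}^2$, and the Poincaré-type inequality of Lemma \ref{Lemma1} then yields $E_k\geq\frac{1}{C}\norm{H^1(\Omega)}{\tilde w_k}^2$ with $C$ independent of $h_k$ for $k$ large.

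Finally, Cauchy--Schwarz bounds the right-hand side of the identity, which equals $E_k$, by $C\bigl(q_1^{(k)}+q_2^{(k)}\bigr)\norm{H^1(\Omega)}{\tilde w_k}$, with $q_1^{(k)}=\norm{L^2(\Omega)}{(u_k-u)\nabla S_{h_k}(u)}$ and $q_2^{(k)}=\norm{L^2(\Omega)}{(u_k-u)S_{h_k}(u)^3}$; hence $\norm{H^1(\Omega)}{\tilde w_k}\leq C\bigl(q_1^{(k)}+q_2^{(k)}\bigr)$. Each term tends to $0$: writing $\nabla S_{h_k}(u)=\nabla S(u)+(\nabla S_{h_k}(u)-\nabla S(u))$, the contribution of $\nabla S(u)$ vanishes by dominated convergence (using $|u_k-u|\leq 1$ and $u_k\to u$ a.e.), just as for $q_1$ in Proposition \ref{prop:continuity}, while the remainder is bounded by $\norm{L^2(\Omega)}{\nabla S_{h_k}(u)-\nabla S(u)}\to 0$ from Lemma \ref{prop:ciarlet}; the term $q_2^{(k)}$ is treated identically, now invoking $S_{h_k}(u)\to S(u)$ in $L^6(\Omega)$. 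Thus $\tilde w_k\to 0$ in $H^1(\Omega)$ and, combined with $S_{h_k}(u)\to S(u)$, this shows $S_{h_k}(u_k)\to S(u)$ in $H^1(\Omega)$ along the chosen subsequence. As the candidate limit is always the same $S(u)$, the standard subsequence principle promotes the convergence to the full sequence, concluding the proof.
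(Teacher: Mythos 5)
Your proof is correct and follows exactly the route the paper sketches for this proposition: the decomposition of $S_{h_k}(u_k)-S(u)$ through the intermediate term $S_{h_k}(u)$, Lemma \ref{prop:ciarlet} applied at the fixed argument $u$ for the consistency piece, and a discrete analogue of the stability estimate of Proposition \ref{prop:continuity} for the piece $S_{h_k}(u_k)-S_{h_k}(u)$. Your Egorov argument is a clean way of supplying the uniform-in-$k$ coercivity of the zeroth-order term, a point the paper leaves implicit (its suggested route would instead add and subtract to compare $b(u_k)S_{h_k}(u)^2$ with $b(u)S(u)^2$, as in Proposition \ref{prop:continuity}, absorbing the error terms via Lemma \ref{prop:ciarlet}), so the two treatments are interchangeable details within the same proof.
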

Define the discrete cost functional, $J_{\varepsilon,h}: \mathcal{K}_h \rightarrow \R$
\begin{equation}
 J_{\varepsilon,h}(u_h) = \half \norm{L^2(\partial \Omega)}{S_h(u_h)-y_{meas,h}}^2 + \alpha \int_{\Omega}\left( \varepsilon|\nabla u_h|^2 + \frac{1}{\varepsilon}u_h(1-u_h)\right),
\label{eq:Jeh}
\end{equation}
being $y_{meas,h}$ the best approximation of the boundary datum $y_{meas}$ in the space of the traces of $V_h$ functions.
The existence of minimizers of the discrete functionals $J_{\varepsilon,h}$ is stated in the following proposition, together with an asymptotic analysis as $h \rightarrow 0$. Taking advantage of Proposition \ref{cor:1}, the proof is analogous to the one of \cite[Theorem 3.2]{art:deckelnick}.
\begin{proposition}
For each $h>0$, there exists $u_h \in \mathcal{K}_h$ such that $J_{\varepsilon,h}(u_h) = min_{v_h \in \mathcal{K}_h}J_{\varepsilon,h}(v_h)$. Every sequence $\{u_{h_k}\}$ s.t. $\lim_{k\rightarrow \infty}h_k = 0$ admits a subsequence that converges in $H^1(\Omega)$ to a minimum of the cost functional $J_\varepsilon$.
\label{prop:minJeh}
\end{proposition}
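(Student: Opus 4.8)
The plan is to prove the two assertions in turn: first the existence of a minimizer of $J_{\varepsilon,h}$ for fixed $h$, then the subsequential $H^1$-convergence of discrete minimizers to a minimizer of $\Je$ as $h \to 0$, following the structure of \cite[Theorem 3.2]{art:deckelnick} but accounting for the nonlinearity through Proposition \ref{cor:1}.

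For the existence part I would exploit finite dimensionality. For fixed $h$ the admissible set $\mathcal{K}_h = V_h \cap \mathcal{K}$ is closed and bounded in $V_h$: a $\mathbb{P}_1$ function satisfies $0 \le v_h \le 1$ on all of $\Omega$ iff its nodal values lie in $[0,1]$, so $\mathcal{K}_h$ is homeomorphic to a cube and hence compact. The Ginzburg--Landau integrand is polynomial in $u_h$ and $\nabla u_h$, and the discrete solution map $S_h$ is continuous on $\mathcal{K}_h$ (which is shown exactly as in Proposition \ref{prop:continuity}), so $J_{\varepsilon,h}$ is continuous on the compact set $\mathcal{K}_h$ and attains its minimum. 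Equivalently, one takes a minimizing sequence, extracts a subsequence convergent in $V_h$ using the $H^1$ bound coming from the energy, notes that the limit remains in the closed set $\mathcal{K}_h$, and passes to the limit by continuity of each term.

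For the asymptotics, fix $h_k \to 0$ and let $u_{h_k}$ minimize $J_{\varepsilon,h_k}$. To obtain bounds uniform in $k$ I would compare against the fixed competitor $v \equiv 0 \in \mathcal{K}_{h_k}$, for which the Ginzburg--Landau term vanishes and $J_{\varepsilon,h_k}(0) = \half \norm{L^2(\partial\Omega)}{S_{h_k}(0) - y_{meas,h_k}}^2$ stays bounded by Lemma \ref{prop:ciarlet}. Minimality then gives $J_{\varepsilon,h_k}(u_{h_k}) \le M$ uniformly, hence $\varepsilon\alpha \norm{L^2(\Omega)}{\nabla u_{h_k}}^2 \le M$, and together with $0 \le u_{h_k} \le 1$ this yields a uniform $H^1(\Omega)$ bound. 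Weak compactness gives $u_{h_k} \rightharpoonup \ue$ in $H^1$, with $u_{h_k} \to \ue$ strongly in $L^2$ (hence $L^1$) and pointwise a.e., so $0 \le \ue \le 1$ a.e.\ and $\ue \in \mathcal{K}$. I would then establish minimality by a two-sided inequality: the mismatch term converges, $\half \norm{L^2(\partial\Omega)}{S_{h_k}(u_{h_k}) - y_{meas,h_k}}^2 \to \half \norm{L^2(\partial\Omega)}{S(\ue) - y_{meas}}^2$, thanks to Proposition \ref{cor:1} combined with $y_{meas,h_k} \to y_{meas}$; the double-well term converges by dominated convergence; and the gradient term is weakly lower semicontinuous, so $\Je(\ue) \le \liminf_k J_{\varepsilon,h_k}(u_{h_k})$. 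For the reverse inequality I build, for an arbitrary competitor $v \in \mathcal{K}$, a recovery sequence $v_{h_k} \in \mathcal{K}_{h_k}$ with $v_{h_k} \to v$ in $H^1$; since by the same arguments $J_{\varepsilon,h_k}(v_{h_k}) \to \Je(v)$, minimality of $u_{h_k}$ forces $\limsup_k J_{\varepsilon,h_k}(u_{h_k}) \le \Je(v)$. Taking $v = \ue$ collapses all inequalities into equalities, proving both that $\ue$ minimizes $\Je$ and that $J_{\varepsilon,h_k}(u_{h_k}) \to \Je(\ue)$.

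Strong $H^1$-convergence then follows from this convergence of energies: since the mismatch and double-well terms converge to their limits, the gradient energy must converge as well, $\norm{L^2(\Omega)}{\nabla u_{h_k}} \to \norm{L^2(\Omega)}{\nabla \ue}$; combined with the weak convergence $\nabla u_{h_k} \rightharpoonup \nabla \ue$ in the Hilbert space $L^2(\Omega)$, convergence of norms upgrades weak to strong, and with the strong $L^2$ convergence of $u_{h_k}$ this gives $u_{h_k} \to \ue$ in $H^1(\Omega)$. I expect the main obstacle to be twofold. First, the recovery sequence must respect the box constraint $u \in [0,1]$: the key point is that the averaging (Cl\'ement-type) interpolant underlying \eqref{eq:Clement} returns convex combinations of local averages of $v$ against the nonnegative $\mathbb{P}_1$ partition of unity, so it maps $\mathcal{K}$ into $\mathcal{K}_{h_k}$ while retaining $H^1$-convergence. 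Second, and specific to the nonlinear setting (absent in \cite{art:deckelnick}), one must verify the hypothesis of Proposition \ref{cor:1} that the relevant $L^1$-limits are not identically equal to $1$; this has to be checked for $\ue$ in the lower-bound step and for the competitors in the recovery step, which requires either excluding this degenerate case a priori or arguing that, under the standing assumptions on $f$ in Proposition \ref{prop:geq} or \ref{prop:geq2}, it is incompatible with the uniform energy bound and minimality.
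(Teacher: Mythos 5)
Your proof is correct and takes essentially the same route as the paper: the paper does not write the argument out, stating only that, with Proposition \ref{cor:1} handling the nonlinearity, the proof is analogous to \cite[Theorem 3.2]{art:deckelnick}, and your reconstruction is precisely that scheme (compactness of the finite-dimensional set $\mathcal{K}_h$ for fixed $h$, a uniform energy bound from the competitor $v \equiv 0$, weak lower semicontinuity plus a box-constraint-preserving recovery sequence, and convergence of energies to upgrade weak to strong $H^1$ convergence). The two caveats you flag --- using an averaging quasi-interpolant that maps $\mathcal{K}$ into $\mathcal{K}_h$ rather than the classical Cl\'ement operator, and ruling out the degenerate $L^1$-limit $u \equiv 1$ under the standing hypotheses on $f$ so that Proposition \ref{cor:1} applies --- are exactly the points where the cited linear-case proof needs adaptation.
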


The strategy we adopt in order to minimize the discrete cost functional $J_{\varepsilon,h}$ is to search for a function $u_h$ satisfying discrete optimality conditions, which can be obtained as in section \ref{sec:oc}:
\begin{equation}
	J'_{\varepsilon,h}(u_h)[v_h - u_h] \geq 0 \quad \forall v_h \in \mathcal{K}_h
\label{eq:discrVI}
\end{equation}
where for each $\theta_h \in \mathcal{K}_h - \mathcal{K}_h := \{\theta_h = w_h - v_h; \ w_h, v_h \in \mathcal{K}_h\}$ it holds
\begin{equation}
	J'_{\varepsilon,h}(u_h)[\vartheta_h] = \int_{\Omega}(1-k)\vartheta_h \nabla S_h(u_h) \cdot \nabla p_h + \int_{\Omega}\vartheta_h S_h(u_h)^3 p_h +  2 \alpha \varepsilon \int_{\Omega}\nabla u_h \cdot \nabla \vartheta_h  + \frac{\alpha}{\varepsilon}	\int_{\Omega}(1-2u_h)\vartheta_h,
\label{eq:discrJ1}
\end{equation}
where $p_h$ is the solution in $V_h$ of the adjoint problem \eqref{eq:adjoint} associated to $u_h$. 

It is finally possible to demonstrate the convergence of critical points of the discrete functionals $J_{\varepsilon,h}$ (i.e., functions in $\mathcal{K}_h$ satisfying \eqref{eq:discrVI}) to a critical point of the continuous on, $J_\varepsilon$. The proof can be adapted from the one of \cite[Theorem 3.2]{art:deckelnick}.
\begin{proposition}
Consider a sequence $\{h_k\}$ s.t. $h_{k} \rightarrow 0$ and for every $k$ denote as $u_k$ a solution of the discrete variational inequality \eqref{eq:discrVI}. Then there exists a subsequence of $\{u_k\}$ that converges a.e and in $H^1(\Omega)$ to a solution $u$ of the continuous variational inequality \eqref{eq:VI}
\label{prop:discrVI}
\end{proposition}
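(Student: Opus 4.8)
The plan is to follow the standard scheme for passing to the limit in a sequence of discrete variational inequalities, as in \cite[Theorem 3.2]{art:deckelnick}, controlling the nonlinear terms through the convergence results already established (Lemma \ref{prop:ciarlet} and Proposition \ref{cor:1}). First I would establish compactness of $\{u_k\}$ in $H^1(\Omega)$. Since $u_k \in \mathcal{K}_{h_k}$ we have $0\le u_k\le 1$ a.e., so $\norm{L^2(\Omega)}{u_k}^2\le|\Omega|$; the missing ingredient is a uniform bound on $\norm{L^2(\Omega)}{\nabla u_k}$. To obtain it I would test the discrete inequality \eqref{eq:discrVI} with the admissible constant $v_h\equiv\tfrac12\in\mathcal{K}_{h_k}$. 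Using \eqref{eq:discrJ1} and $\nabla(\tfrac12-u_k)=-\nabla u_k$, the gradient term yields $-2\alpha\varepsilon\norm{L^2(\Omega)}{\nabla u_k}^2$, whence
\[
2\alpha\varepsilon \norm{L^2(\Omega)}{\nabla u_k}^2 \le \int_\Omega (1-k)\left(\tfrac12 - u_k\right)\nabla S_{h_k}(u_k)\cdot\nabla p_k + \int_\Omega \left(\tfrac12 - u_k\right) S_{h_k}(u_k)^3 p_k + \frac{\alpha}{\varepsilon}\int_\Omega (1-2u_k)\left(\tfrac12 - u_k\right).
\]
The last integrand equals $2u_k(u_k-1)+\tfrac12\le\tfrac12$, so the potential term is bounded by $\tfrac{\alpha|\Omega|}{2\varepsilon}$; for the first two I would use the uniform $H^1$ bounds on the discrete state $S_{h_k}(u_k)$ and adjoint $p_k$ (obtained by testing the state and adjoint equations with themselves, using $a(u_k)\ge k$, $b(u_k)\ge 0$, Lemma \ref{Lemma1} and the trace inequality, all uniform in $u_k\in\mathcal{K}$) together with $\abs{\tfrac12-u_k}\le 1$ and Cauchy--Schwarz. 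This gives an $H^1(\Omega)$ bound independent of $k$, so I may extract a subsequence (not relabelled) with $u_k\rightharpoonup u$ in $H^1(\Omega)$, $u_k\to u$ in $L^2(\Omega)$ and a.e.; since $0\le u_k\le 1$ a.e. we get $u\in\mathcal{K}$ and $u_k\to u$ in $L^1(\Omega)$.

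Next I would pass to the limit in coefficients and solutions. Assuming $u\not\equiv 1$ (consistently with the hypothesis of Proposition \ref{cor:1}), that proposition gives $S_{h_k}(u_k)\to S(u)$ in $H^1(\Omega)\hookrightarrow L^6(\Omega)$, so $S_{h_k}(u_k)^3\to S(u)^3$ in $L^2(\Omega)$, while $a(u_k)\to a(u)$ and $b(u_k)\to b(u)$ a.e. and in every $L^p$, $p<\infty$. The discrete adjoints $p_k$ solve a \emph{linear} problem with these coefficients and right-hand side $S_{h_k}(u_k)|_{\partial\Omega}-y_{meas,h_k}\to S(u)|_{\partial\Omega}-y_{meas}$ in $L^2(\partial\Omega)$; by the uniform coercivity $a(u_k)\ge k$ and a Céa-type stability estimate analogous to Lemma \ref{prop:ciarlet}, I obtain $p_k\to p$ in $H^1(\Omega)$, where $p$ solves the continuous adjoint problem \eqref{eq:adjoint} at $u$.

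Finally I would pass to the limit in \eqref{eq:discrVI}. Fix $v\in\mathcal{K}$ and build a recovery sequence $v_{h_k}\in\mathcal{K}_{h_k}$ with $v_{h_k}\to v$ in $H^1(\Omega)$: the Clément interpolant of \eqref{eq:Clement}, whose local averages stay in $[0,1]$, remains in $\mathcal{K}_{h_k}$. Testing \eqref{eq:discrVI} with $v_{h_k}$, every term carrying the factor $(v_{h_k}-u_k)\to(v-u)$ in $L^2$ (bounded in $L^\infty$), paired with the strongly convergent fields $\nabla S_{h_k}(u_k)$, $S_{h_k}(u_k)^3$, $p_k$, $(1-2u_k)$, converges to the corresponding continuous integral. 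The only delicate term is
\[
2\alpha\varepsilon\int_\Omega \nabla u_k \cdot \nabla(v_{h_k}-u_k) = 2\alpha\varepsilon\int_\Omega \nabla u_k\cdot\nabla v_{h_k} - 2\alpha\varepsilon\norm{L^2(\Omega)}{\nabla u_k}^2,
\]
where $\nabla u_k$ converges only weakly; here the inequality structure is essential: $\int_\Omega\nabla u_k\cdot\nabla v_{h_k}\to\int_\Omega\nabla u\cdot\nabla v$ by weak--strong convergence, while weak lower semicontinuity gives $\limsup_k\bigl(-\norm{L^2(\Omega)}{\nabla u_k}^2\bigr)\le-\norm{L^2(\Omega)}{\nabla u}^2$. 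Taking $\limsup$ in \eqref{eq:discrVI} and using subadditivity of $\limsup$ then yields $0\le J'_\varepsilon(u)[v-u]$, i.e. \eqref{eq:VI}--\eqref{eq:OC}, for every $v\in\mathcal{K}$. To upgrade to strong convergence I would specialize to $v=u$: then $(v_{h_k}-u_k)\to 0$ in $L^2$ annihilates the first, second and potential terms and the inequality rearranges to $2\alpha\varepsilon\norm{L^2(\Omega)}{\nabla u_k}^2\le 2\alpha\varepsilon\int_\Omega\nabla u_k\cdot\nabla v_{h_k}+o(1)\to 2\alpha\varepsilon\norm{L^2(\Omega)}{\nabla u}^2$, so $\limsup_k\norm{L^2(\Omega)}{\nabla u_k}\le\norm{L^2(\Omega)}{\nabla u}$; combined with weak lower semicontinuity this forces $\norm{L^2(\Omega)}{\nabla u_k}\to\norm{L^2(\Omega)}{\nabla u}$, and with $u_k\rightharpoonup u$ in the Hilbert space $H^1(\Omega)$ gives $u_k\to u$ strongly. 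The main obstacle is exactly this diffusion term: since $\nabla u_k$ converges only weakly one cannot treat the quadratic $-\norm{L^2(\Omega)}{\nabla u_k}^2$ as an equality, and it is crucial that the optimality condition is an inequality of the correct sign so that weak lower semicontinuity supplies the needed $\limsup$ bound, the same mechanism (with $v=u$) also delivering the strong $H^1$ convergence. A secondary technical point is the adjoint convergence $p_k\to p$ and the exclusion of the degenerate limit $u\equiv 1$, needed for Proposition \ref{cor:1} and the uniform coercivity estimates to apply.
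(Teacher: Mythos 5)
Your proposal is correct and follows essentially the same route the paper intends: the paper gives no standalone proof of Proposition \ref{prop:discrVI}, stating only that the argument is adapted from \cite[Theorem 3.2]{art:deckelnick}, and your scheme is precisely that adaptation (an $H^1$ bound by testing the discrete inequality with the constant $\tfrac12$, extraction of a weak limit in $\mathcal{K}$, a recovery sequence in $\mathcal{K}_{h_k}$ via Clément interpolation, weak lower semicontinuity to handle the quadratic gradient term in the limsup, and strong $H^1$ convergence by re-testing with $v=u$), with the nonlinear state and adjoint terms controlled exactly through Lemma \ref{prop:ciarlet} and Proposition \ref{cor:1} as the paper sets up. Your closing caveats --- excluding the degenerate limit $u\equiv 1$ and securing uniform state/adjoint bounds via Proposition \ref{prop:geq} --- correspond to the hypotheses the paper itself attaches to those auxiliary results, so they are consistent with, not a departure from, the paper's argument.
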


\subsection{Reconstruction algorithm: a Parabolic Obstacle Problem approach}
\label{POP}
The necessary optimality conditions that have been stated in Proposition \ref{prop:optcond}, together with the expression of the Fréchet derivative of the cost functional reported in \eqref{eq:VI} allow to define a Parabolic Obstacle problem, which consists in a very common strategy in order to search for a solution of optimization problems in a phase-field approach. In this section we give a continuous formulation of the problem, and provide a formal proof of its desired properties. We then introduce a numerical discretization of the problem and rigorously prove the main convergence results. 
\par
The core of the proposed approach is to rely on a parabolic problem whose solution $u(\cdot,t)$ converges, as the fictitious time variable tends to $+\infty$, to an asymptotic state $u_{\infty}$ satisfying the continuous optimality conidtions \eqref{eq:VI}. The problem can be formulated as follows, for a fixed $\varepsilon > 0$: let $u$ be the solution of 
\begin{equation}
	\left\{
	\begin{aligned}
		\int_{\Omega} \partial_t u (v-u) + \Je'(u)[v-u] &\geq 0 \qquad \forall v \in \mathcal{K}, \quad t \in(0,+\infty)\\
		u(\cdot,0) &= u_0 \in \mathcal{K}
	\end{aligned}
	\right.
\label{eq:POP}
\end{equation}
The theoretical analysis of the latter problem is beyond the purposes of this work, and would require to deal with the severe non-linearity of the expression of $\Je'(u)$. We reduce ourselves to formally report the expected properties of the Parabolic Obstacle Problem and then analyse in detail its discretised version.
The motivation for the introduction of the Parabolic Obstacle Problem is twofold:
\begin{itemize}
	\item \textit{the evaluation of the cost functional along the solution of problem \eqref{eq:POP} is a decreasing function of time.} Indeed, 
	\[
\begin{aligned}
\frac{d}{dt} \Je(u(\cdot,t)) 
\leq -\norm{L^2(\Omega)}{\partial_t u(\cdot,t)}^2 \leq 0.
\end{aligned}
\]
\item \textit{ As $t \rightarrow +\infty$, the solution $u(\cdot,t)$ converges to $u_\infty \in H^1(\Omega)$, which satisfies the optimality conditions \eqref{eq:OC}}. 
\end{itemize}

We now provide a complete discretization of the Parabolic Obstacle Problem 
by setting \eqref{eq:POP} in the discrete spaces $\mathcal{K}_h$ and $V_h$, 
and by considering a semi-implicit one-step scheme for the time updating, as in \cite{art:deckelnick}: i.e., by treating explicitly the nonlinear terms and implicitly the linear ones. We obtain that the approximate solution $\{u_h^n\}_{n\in \N} \subset V_h,$ $u_h^n\approx u(\cdot,t^n)$ is computed as:
\begin{equation}
	\left\{
	\begin{aligned}
	u_h^0 &= u_0 \in \mathcal{K}_h \qquad \textit{(a prescribed initial datum)}\\
	u_h^{n+1}& \in \mathcal{K}_h: \int_{\Omega}(u_h^{n+1}-u_h^n)(v_h-u_h^{n+1}) + \tau_n \int_{\Omega}(1-k)\nabla S_h(u_h^n)\cdot \nabla p_h^n (v_h - u_h^{n+1}) \\
	&\qquad \qquad+\tau_n \int_{\Omega}S_h(u_h^n)^3 p_h^n (v_h - u_h^{n+1}) + 2 \tau_n \alpha \varepsilon \int_{\Omega}\nabla u_h^{n+1} \cdot \nabla(v_h - u_h^{n+1}) \\
	&\qquad \qquad+\tau_n \alpha \frac{1}{\varepsilon} \int_{\Omega}(1-2u_h^n)(v_h - u_h^{n+1})\geq 0 \quad \forall v_h \in \mathcal{K}_h, \ n=0,1,\ldots
	\end{aligned}
\right.
\label{eq:fulldiscrPOP}
\end{equation}

For the fully-discretized problem \eqref{eq:fulldiscrPOP}, it is possible to prove rigorously the properties that we have formally stated for the continuous one; in particular, the convergence of the sequence $\{u_h^n\}$ to a critical point of the discrete cost functional $J_{\varepsilon,h}$. The following preliminary result is necessary for the proof of the main property:
\begin{lemma}
For each $n > 0$, there exists a positive constant $\mathcal{B}_n = \mathcal{B}_n(\Omega, h, k, \norm{H^1}{p_h^n}, \norm{H^1}{y_h^n}, \norm{H^1}{y_h^{n+1}})$ such that, provided that $\tau_n \leq \mathcal{B}_n$ it holds that:
	\begin{equation}
	\norm{L^2}{u_h^{n+1} - u_h^n}^2 + J_{\varepsilon,h}(u_h^{n+1}) \leq J_{\varepsilon,h}(u_h^n) \quad n>0.
	\label{eq:Jdecrease}
	\end{equation}
	\label{prop:decrease}
\end{lemma}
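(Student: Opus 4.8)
The plan is to establish the energy-decrease inequality \eqref{eq:Jdecrease} by testing the discrete variational inequality \eqref{eq:fulldiscrPOP} with a suitable competitor and then carefully estimating the resulting error terms. The natural choice is to take $v_h = u_h^n$ in \eqref{eq:fulldiscrPOP} (the previous iterate is admissible since $u_h^n \in \mathcal{K}_h$), which makes the test direction $v_h - u_h^{n+1} = u_h^n - u_h^{n+1} = -(u_h^{n+1}-u_h^n)$. This immediately produces the term $-\norm{L^2}{u_h^{n+1}-u_h^n}^2$ from the time-derivative part and, from the Ginzburg--Landau diffusion term, a contribution $-2\tau_n\alpha\varepsilon\int_\Omega \nabla u_h^{n+1}\cdot\nabla(u_h^{n+1}-u_h^n)$ which I will rewrite using the polarization identity $2\nabla u_h^{n+1}\cdot\nabla(u_h^{n+1}-u_h^n) = |\nabla u_h^{n+1}|^2 - |\nabla u_h^n|^2 + |\nabla(u_h^{n+1}-u_h^n)|^2$ to extract the difference of the diffusion energies at the two time levels.

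Next I would split $J_{\varepsilon,h}(u_h^{n+1}) - J_{\varepsilon,h}(u_h^n)$ into its three constituent pieces (the boundary misfit, the diffusion energy, and the double-well potential) and match each against what the tested inequality supplies. The diffusion energy difference is handled exactly by the polarization identity above. The double-well term $\frac{\alpha}{\varepsilon}\int_\Omega[u_h^{n+1}(1-u_h^{n+1}) - u_h^n(1-u_h^n)]$ must be compared with the explicitly-treated linear term $\frac{\tau_n\alpha}{\varepsilon}\int_\Omega(1-2u_h^n)(u_h^n-u_h^{n+1})$; since $w\mapsto w(1-w)$ is concave, the difference of the potential values is controlled by its linearization plus a quadratic remainder of the form $\frac{\alpha}{\varepsilon}\int_\Omega(u_h^{n+1}-u_h^n)^2$, which can be absorbed into the $\norm{L^2}{u_h^{n+1}-u_h^n}^2$ term on the left provided $\tau_n$ is small enough to dominate the factor $\alpha/\varepsilon$. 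The genuinely delicate pieces are the two misfit-related terms coming from the explicit treatment of $S_h$: the tested inequality provides the linearized expressions $\tau_n\int_\Omega(1-k)\nabla S_h(u_h^n)\cdot\nabla p_h^n(u_h^n-u_h^{n+1})$ and $\tau_n\int_\Omega S_h(u_h^n)^3 p_h^n(u_h^n-u_h^{n+1})$, whereas $J_{\varepsilon,h}$ contains the full nonlinear misfit $\frac12\norm{L^2(\partial\Omega)}{S_h(u_h^{n+1})-y_{meas,h}}^2$ evaluated at the new iterate.

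The main obstacle is therefore to bound the discrepancy between the exact change in the boundary misfit and its first-order (adjoint-based) approximation. I would write $S_h(u_h^{n+1}) = S_h(u_h^n) + \delta_h$, expand the misfit quadratically, and recognize that the first-order part of this expansion is precisely what the adjoint identity \eqref{eq:adjoint} converts into the tested linearized terms; the residual consists of (i) the quadratic term $\frac12\norm{L^2(\partial\Omega)}{\delta_h}^2\geq 0$ and (ii) a linearization error measuring how far $\delta_h = S_h(u_h^{n+1})-S_h(u_h^n)$ deviates from the exact directional derivative of $S_h$ in the direction $u_h^{n+1}-u_h^n$. Using the Lipschitz-type stability of the discrete solution map (established in the spirit of Proposition \ref{prop:continuity} and Lemma \ref{prop:ciarlet}), together with the discrete Fréchet-differentiability from Proposition \ref{prop:optcond}, this error is quadratic in $\norm{L^2}{u_h^{n+1}-u_h^n}$ with a constant depending on $h$, $k$, and the $H^1$-norms of $p_h^n$, $y_h^n=S_h(u_h^n)$ and $y_h^{n+1}=S_h(u_h^{n+1})$ — exactly the dependencies listed in the statement of $\mathcal{B}_n$. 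Collecting all remainders, I obtain that the left-hand side minus the right-hand side of \eqref{eq:Jdecrease} is bounded below by $(1 - C\tau_n)\norm{L^2}{u_h^{n+1}-u_h^n}^2$ for a constant $C = C(\Omega,h,k,\norm{H^1}{p_h^n},\norm{H^1}{y_h^n},\norm{H^1}{y_h^{n+1}})$; choosing $\mathcal{B}_n = 1/C$ and imposing $\tau_n\leq\mathcal{B}_n$ guarantees this quantity is nonnegative, which is the claimed inequality. The most subtle calculation will be making the linearization-error estimate fully explicit in terms of the stated norms while keeping the constant independent of $\tau_n$, so that the smallness condition on $\tau_n$ can be decoupled cleanly.
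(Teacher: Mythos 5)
Your proposal follows essentially the same route as the paper's proof: test \eqref{eq:fulldiscrPOP} with $v_h=u_h^n$, extract the energy differences via the polarization identity, convert the first-order misfit increment through the adjoint problem \eqref{eq:adjoint}, and control the second-order remainder by $\mathcal{C}_n\norm{L^2}{u_h^{n+1}-u_h^n}^2$ using the discrete stability estimate $\norm{H^1}{y_h^{n+1}-y_h^n}\leq C_4\norm{L^2}{u_h^{n+1}-u_h^n}$ together with the finite-dimensional equivalence $\norm{W^{1,\infty}}{v_h}\leq C_1(\Omega,h)\norm{H^1}{v_h}$ on $V_h$. The paper computes the remainder exactly, by telescoping with the direct problems at $u_h^n$ and $u_h^{n+1}$ (yielding the explicit terms involving $\nabla(y_h^{n+1}-y_h^n)\cdot\nabla p_h^n$, $(y_h^{n+1}-y_h^n)^2$ and $(y_h^{n+1}-y_h^n)^3$), rather than invoking the Fréchet differentiability of the discrete solution map; this is only a presentational difference, and the dependencies of the constant you list are the correct ones.

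Three details need fixing. First, the sign of the double-well remainder works \emph{for} you, not against you: since $\psi(w)=w(1-w)$ is quadratic, the Taylor expansion is exact, $\psi(u_h^{n+1})-\psi(u_h^n)=(1-2u_h^n)(u_h^{n+1}-u_h^n)-(u_h^{n+1}-u_h^n)^2$, so the explicit treatment of $(1-2u_h^n)$ leaves the term $+\frac{\alpha}{\varepsilon}\norm{L^2}{u_h^{n+1}-u_h^n}^2$ on the favorable side of the inequality (as in the paper's first display). No absorption is needed, and in particular no condition of the type $\tau_n\lesssim\varepsilon/\alpha$ arises; your version would make $\mathcal{B}_n$ depend on $\alpha$ and $\varepsilon$, contrary to the dependence stated in the lemma. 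Second, the boundary quadratic term $\half\norm{L^2(\partial\Omega)}{\delta_h}^2$ sits on the \emph{unfavorable} side, so noting that it is nonnegative does not dispose of it: it must be bounded by $\half C_{tr}^2 C_4^2\norm{L^2}{u_h^{n+1}-u_h^n}^2$ via the trace inequality and the stability estimate (this is the $\half C_{tr}^2$ contribution inside the paper's constant $C_3$). Third, your final bookkeeping is reversed and slightly too generous: what must be shown nonnegative is $J_{\varepsilon,h}(u_h^n)-J_{\varepsilon,h}(u_h^{n+1})-\norm{L^2}{u_h^{n+1}-u_h^n}^2$, and retaining the extra squared term in \eqref{eq:Jdecrease} requires $1/\tau_n\geq 1+\mathcal{C}_n$, i.e. $\mathcal{B}_n=1/(1+\mathcal{C}_n)$ as in the paper, not $\mathcal{B}_n=1/C$ (the latter only yields monotonicity of the energy). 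All three are local repairs; the overall argument is sound.
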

\begin{proof}
In the expression of the discrete parabolic obstacle problem \eqref{eq:fulldiscrPOP}, consider $v_h = u_h^n$: via simple computation, we can point out that
\[
\begin{aligned}
\frac{1}{\tau_n}&\norm{L^2}{u_h^{n+1}-u_h^n}^2 + J(u_h^{n+1}) - J(u_h^{n}) + \alpha \varepsilon \norm{L^2}{\nabla (u_h^{n+1} - u_h^n)}^2 + \frac{\alpha}{\varepsilon}\norm{L^2}{u_h^{n+1}-u_h^n}^2 \\
&\leq \int_{\Omega}\left( a(u_h^{n+1}) - a(u_h^{n})\right) \nabla y_h^n \nabla p_h^n + \int_{\Omega}\left( b(u_h^{n+1}) - b(u_h^{n}) \right)(y_h^n)^3 p_h^n \\
&+ \half \norm{L^2(\partial \Omega)}{y_h^{n+1}-y_h^n}^2 + \int_{\partial \Omega} (y_h^{n+1}-y_h^n)(y_h^{n+1} - y_{meas,h}),
\end{aligned}
\]
where $y_h^n = S_h(u_h^n)$ and $y_h^{n+1} = S_h(u_h^{n+1})$.
Moreover, by the expression of the adjoint problem, 
\[
RHS = \half \norm{L^2(\partial \Omega)}{y_h^{n+1}-y_h^n}^2 + \circled{I} + \circled{II},
\]
where
\[
\begin{aligned}
\circled{I} &= \int_{\Omega}\left( a(u_h^{n+1}) - a(u_h^{n})\right) \nabla y_h^n \cdot \nabla p_h^n + \int_\Omega a(u_h^n)\nabla p_h^n \cdot \nabla(y_h^{n+1}-y_h^n) \\
&= \int_{\Omega}\left( a(u_h^n) - a(u_h^{n+1})\right) \nabla (y_h^{n+1} - y_h^n) \cdot \nabla p_h^n  + \int_\Omega a(u_h^{n+1})\nabla y_h^{n+1}\cdot \nabla p_h^n - \int_\Omega a(u_h^{n})\nabla y_h^{n}\cdot \nabla p_h^n;
\end{aligned}
\]
\[
\begin{aligned}
\circled{II} &= \int_{\Omega}\left( b(u_h^{n+1}) - b(u_h^{n})\right) (y_h^n)^3 p_h^n + 3\int_\Omega b(u_h^n)(y_h^n)^2 p_h^n (y_h^{n+1}-y_h^n) = \\
&= \int_{\Omega} b(u_h^{n+1}) \left((y_h^n)^3-(y_h^{n+1})^3\right) p_h^n + 3\int_\Omega b(u_h^n)(y_h^n)^2 p_h^n (y_h^{n+1}-y_h^n) + \int_\Omega b(u_h^{n+1})(y_h^{n+1})^3p_h^n \\
& \qquad - \int_\Omega b(u_h^{n})(y_h^{n})^3p_h^n = \qquad \textit{(by the expansion $(y_h^{n+1})^3 = \left( y_h^n + (y_h^{n+1}-y_h^n) \right)^3$)} \\
&= 3\int_\Omega \left(b(u_h^n) - b(u_h^{n+1})\right)(y_h^n)^2 p_h^n (y_h^{n+1}-y_h^n) - 3\int_\Omega b(u_h^{n+1})(y_h^n) p_h^n (y_h^{n+1}-y_h^n)^2 \\
& \qquad - \int_\Omega b(u_h^{n+1}) p_h^n (y_h^{n+1}-y_h^n)^3 + \int_\Omega b(u_h^{n+1})(y_h^{n+1})^3p_h^n - \int_\Omega b(u_h^{n})(y_h^{n})^3p_h^n.
\end{aligned}
\]
Collecting the terms and taking advantage of the expression of the direct problem, we conclude that
\[
\begin{aligned}
RHS =& \half \norm{L^2(\partial \Omega)}{y_h^{n+1}-y_h^n}^2 + \int_{\Omega}\left( a(u_h^n) - a(u_h^{n+1})\right) \nabla (y_h^{n+1} - y_h^n) \cdot \nabla p_h^n \\
& \qquad + 3\int_\Omega \left(b(u_h^n) - b(u_h^{n+1})\right)(y_h^n)^2 p_h^n (y_h^{n+1}-y_h^n) \\
& \qquad - 3\int_\Omega b(u_h^{n+1})(y_h^n) p_h^n (y_h^{n+1}-y_h^n)^2 - \int_\Omega b(u_h^{n+1})p_h^n (y_h^{n+1}-y_h^n)^3.
\end{aligned}
\]
We now employ the Cauchy-Schwarz inequality and the regularity of the solutions of the discrete direct and adjoint problems (in particular the equivalence of the $W^{1,\infty}$ and $H^1$ norm in $V_h$: $\norm{W^{1,\infty}}{u_h} \leq C_1 \norm{H^1}{u_h}$, $C_1 = C_1(\Omega, h)$):
\[
RHS \leq C_2 \norm{L^2}{u_h^{n+1} - u_h^n}\norm{H^1}{y_h^{n+1} - y_h^n} + C_3 \norm{H^1}{y_h^{n+1} - y_h^n}^2
\]
with $C_2 = (1-k)C_1\norm{H^1}{p_h^n} + C_1 \norm{H^1}{y_h^n}\norm{H^1}{p_h^n}$ and $C_3 = 3C_1^2\norm{H_1}{y_h^n}\norm{H_1}{p_h^n} + C_1^3\norm{H_1}{p_h^n}(\norm{H_1}{y_h^n}+\norm{H_1}{y_h^{n+1}}) + \half C_{tr}^2$, being $C_{tr}$ the constant of the trace inequality in $H^1(\Omega)$.
Eventually, similarly to the computation included in the proof of Proposition \ref{prop:optcond}, one can assess that
\[
	\norm{H^1}{y_h^{n+1} - y_h^n} \leq C_4 \norm{L^2}{u_h^{n+1} - u_h^n},
\]
with $C_4 = C_4(k, C_1,\norm{H^1}{y_h^{n}},\Omega)$.
Hence, we can conclude that there exists a positive constant $\mathcal{C}_n = C_2 C_4 + C_3 C_4^2$ such that
\[
\frac{1}{\tau_n}\norm{L^2}{u_h^{n+1}-u_h^n}^2 + J(u_h^{n+1}) - J(u_h^{n}) \leq \mathcal{C}_n \norm{L^2}{u_h^{n+1} - u_h^n}^2, 
\]
and choosing $\tau_n < \mathcal{B}_n := \frac{1}{1 + \mathcal{C}_n}$ we can conclude the thesis.
\end{proof}

We are finally able to prove the following convergence result for the fully discretized Parabolic Obstacle Problem:
\begin{proposition}
Consider a starting point $u_h^0 \in \mathcal{K}_h$. Then, there exists a collection of timesteps $\{\tau_n\}$ s.t. $0 <\gamma \leq \tau_n \leq \mathcal{B}_n$ $\forall n>0$. Corresponding to $\{\tau_n\}$, the sequence $\{u_h^n\}$ generated by \eqref{eq:fulldiscrPOP} has a converging subsequence (which we still denote with $u_h^n$) such that $u_h^n \xrightarrow{W^{1,\infty}} u_h \in V_h$, which satisfies the discrete optimality conditions \eqref{eq:discrVI}.
\end{proposition}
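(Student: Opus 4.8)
The plan is to combine the one-step energy-decrease estimate of Lemma~\ref{prop:decrease} with a compactness argument that exploits the finite dimensionality of $V_h$. The first and central task is to show that the admissible timesteps can be bounded below uniformly in $n$, i.e. that $\gamma := \inf_n \mathcal{B}_n > 0$. This is where the discrete setting is decisive: every $u_h^n$ belongs to $\mathcal{K}_h = V_h \cap \mathcal{K}$, so $0 \le u_h^n \le 1$ a.e. and hence $\norm{L^2(\Omega)}{u_h^n} \le |\Omega|^{1/2}$; combining this with the inverse inequality on the fixed mesh $\Tau_h$ shows that $\mathcal{K}_h$ is a bounded, and therefore (being closed in a finite-dimensional space) compact, subset of $V_h$. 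By continuity of $S_h$ and of the adjoint solution map (argued as in Proposition~\ref{prop:continuity}), the images $\{y_h^n\}$ and $\{p_h^n\}$ are then uniformly bounded in $H^1(\Omega)$, so the constant $\mathcal{C}_n$ from the proof of Lemma~\ref{prop:decrease}, which depends only on these norms and on $\Omega, h, k$, admits a uniform bound $\bar{\mathcal{C}}$. Consequently $\mathcal{B}_n = (1+\mathcal{C}_n)^{-1} \ge (1+\bar{\mathcal{C}})^{-1} =: \gamma > 0$, and any choice $\tau_n \in [\gamma, \mathcal{B}_n]$ is admissible.

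Having fixed such a collection $\{\tau_n\}$, Lemma~\ref{prop:decrease} applies at every step, giving
\[
\norm{L^2}{u_h^{n+1}-u_h^n}^2 + J_{\varepsilon,h}(u_h^{n+1}) \le J_{\varepsilon,h}(u_h^n), \qquad n \ge 0 .
\]
Since $J_{\varepsilon,h} \ge 0$, the sequence $\{J_{\varepsilon,h}(u_h^n)\}$ is nonincreasing and bounded below, hence convergent; telescoping then yields
\[
\sum_{n=0}^{\infty} \norm{L^2}{u_h^{n+1}-u_h^n}^2 \le J_{\varepsilon,h}(u_h^0) < \infty ,
\]
so $\norm{L^2}{u_h^{n+1}-u_h^n} \to 0$, and by norm equivalence on $V_h$ also $\norm{W^{1,\infty}}{u_h^{n+1}-u_h^n} \to 0$. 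The iterates $\{u_h^n\}$ lie in the compact set $\mathcal{K}_h$, so there is a subsequence (still denoted $u_h^n$) with $u_h^n \to u_h$ in $W^{1,\infty}(\Omega)$ for some $u_h \in V_h$; since $V_h \subset C(\bar\Omega)$ and $0 \le u_h^n \le 1$ passes to the uniform limit, $u_h \in \mathcal{K}_h$, and the step estimate forces $u_h^{n+1} \to u_h$ along the same subsequence.

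It then remains to pass to the limit in \eqref{eq:fulldiscrPOP}. Dividing by $\tau_n$ and using $\tau_n \ge \gamma$, the discrete time-derivative term is controlled by Cauchy--Schwarz,
\[
\Big| \tfrac{1}{\tau_n}\int_{\Omega}(u_h^{n+1}-u_h^n)(v_h-u_h^{n+1}) \Big| \le \tfrac{1}{\gamma}\norm{L^2}{u_h^{n+1}-u_h^n}\,\norm{L^2}{v_h-u_h^{n+1}} \longrightarrow 0
\]
uniformly for $v_h \in \mathcal{K}_h$. For the remaining terms, continuity of $S_h$ and of the adjoint map gives $S_h(u_h^n) \to S_h(u_h)$ and $p_h^n \to p_h$ in $H^1(\Omega)$, while the Ginzburg--Landau contributions converge directly from $u_h^{n+1} \to u_h$ in $W^{1,\infty}$. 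Collecting these limits recovers $J'_{\varepsilon,h}(u_h)[v_h - u_h] \ge 0$ for all $v_h \in \mathcal{K}_h$, which is exactly \eqref{eq:discrVI}.

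The main obstacle is the first step, and it is genuinely subtle: the bound $\mathcal{B}_n$ produced by Lemma~\ref{prop:decrease} depends on the next iterate $u_h^{n+1}$, which itself depends on the chosen $\tau_n$, so the admissibility requirement $\tau_n \le \mathcal{B}_n$ is implicit and a naive construction would be circular. The circularity is broken precisely by the observation that $\mathcal{K}_h$ is a \emph{bounded} subset of the finite-dimensional space $V_h$ (its nodal values lie in $[0,1]$), which renders all relevant $H^1$ norms bounded independently of $n$ and of the timestep and thereby produces the uniform lower bound $\gamma$. Once this is secured, the rest is a standard compactness-and-continuity argument.
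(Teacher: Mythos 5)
Your proposal is correct, and its overall skeleton (energy decrease from Lemma~\ref{prop:decrease}, telescoping, compactness in the finite-dimensional space $V_h$, and killing the time-derivative term via $\tau_n \geq \gamma$ when passing to the limit in \eqref{eq:fulldiscrPOP}) matches the paper's proof. Where you genuinely diverge is in how the uniform lower bound $\gamma \leq \mathcal{B}_n$ is obtained. The paper bootstraps it from the energy inequality itself: it posits a generic admissible sequence $\tilde\tau_n \leq \mathcal{B}_n$, deduces $\sup_n J_{\varepsilon,h}(u_h^n) \leq J_{\varepsilon,h}(u_h^0)$, extracts $H^1$-boundedness of $\{u_h^n\}$ from the Ginzburg--Landau term, hence boundedness of $\{y_h^n\}$ and $\{p_h^n\}$, and only then concludes $\mathcal{C}_n \leq M$ and $\mathcal{B}_n \geq \gamma$. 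You instead observe a priori that $\mathcal{K}_h$ is a closed bounded subset of the finite-dimensional $V_h$ (nodal values in $[0,1]$ plus the inverse inequality), hence compact, so that $\{y_h^n\}$, $\{p_h^n\}$ and therefore $\mathcal{C}_n$ are uniformly bounded \emph{independently of which timesteps are used}. This is a real gain in rigor: as you correctly flag, $\mathcal{B}_n$ depends on $u_h^{n+1}$, which depends on $\tau_n$, so the paper's ``consider a generic admissible collection'' quietly presupposes that such a collection exists --- an implicit fixed-point condition it never verifies. Your unconditional bound dissolves the circularity (one may simply take $\tau_n \equiv \gamma$). The trade-off is that your $\gamma$ is overtly mesh-dependent through the inverse inequality, whereas the paper's energy route is the one that would survive a mesh-independent or continuous setting; at fixed $h$, however, this costs nothing, since the constants $\mathcal{C}_n$ already contain the $h$-dependent norm-equivalence constant $C_1(\Omega,h)$ from Lemma~\ref{prop:decrease}. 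One shared loose end, inherited from the paper: the uniform bound on $p_h^n$ via continuity of the adjoint map (argued as in Proposition~\ref{prop:continuity}) tacitly requires the coercivity hypotheses (e.g.\ $u$ not identically $1$) to hold along the iterates, which neither you nor the paper verifies; this does not affect the comparison.
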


\begin{proof}
Consider a generic collection of timesteps $\tilde{\tau}_n$ satisfying $\tilde{\tau}_n \leq \mathcal{B}_n$ $\forall n>0$. Hence, by Lemma \ref{prop:decrease}, 
\[
\sum_{n=0}^{\infty} \norm{L^2}{u_h^{n+1} - u_h^n}^2 \leq J_{\varepsilon,h}(u_h^0) \quad \text{and} \quad \sup_{n} J_{\varepsilon,h}(u_h^n) \leq J_{\varepsilon,h}(u_h^0) 
\]
which implies that $\norm{L^2}{u_h^{n+1} - u_h^n} \rightarrow 0$ and hence $u_h^n$ is bounded in $H^1(\Omega)$, and this implies that also $\{y_h^n\}$ and $\{p_h^n\}$ are bounded in $H^1(\Omega)$. According to the definition of the constants $\mathcal{C}_n$ and $\mathcal{B}_n$ reported in the proof of Lemma \ref{prop:decrease}, this entails that there exists a constant $M>0$ such that $\mathcal{C}_n \leq M$ $\forall n>0$, and equivalently there exists a positive constant $\gamma$ s.t. $\gamma \leq \mathcal{B}_n$. Hence, it is possible to choose, for each $n>0$, $\gamma \leq \tau_n \leq \mathcal{B}_n$.
\\Eventually, we conclude that there exists $u_h \in \mathcal{K}_h$ such that, up to a subsequence, $u_h^n \rightarrow u_h$ a.e. and in $W^{1,\infty}(\Omega)$ (and $y_h^n \rightarrow y_h^n := S_h(u_h^n)$, $p_h^n \rightarrow p_h$ in $H^1$ and in $W^{1,\infty}$ as well).
We exploit the expression of the discrete Parabolic Obstacle Problem \eqref{eq:fulldiscrPOP} to show that
\[
\begin{aligned}
\int_{\Omega}(1-k)\nabla y_h^n \cdot \nabla p_h^n (v_h - u_h^{n+1}) + \int_{\Omega}(y_h^n)^3 p_h^n (v_h - u_h^{n+1}) + 2 \alpha \varepsilon \int_{\Omega}\nabla u_h^{n+1} \cdot \nabla(v_h - u_h^{n+1}) \\
+ \alpha \frac{1}{\varepsilon} \int_{\Omega}(1-2u_h^n)(v_h - u_h^{n+1}) \geq - \frac{1}{\tau_n} \int_{\Omega}(u_h^{n+1}-u_h^n)(v_h-u_h^{n+1}) \quad \forall v_h \in \mathcal{K}_h,
\end{aligned}
\]
and since $\frac{1}{\tau_n} > \frac{1}{\gamma}$ $\forall n$, when taking the limit as $n \rightarrow \infty$, the right-hand side converges to $0$, which entails that $u_h$ satisfies the discrete optimality conditions \eqref{eq:discrVI}.
\end{proof}

The most remarkable outcome of the analyzed discretization of the Parabolic Obstacle Problem is the implementation of an iterative reconstruction algorithm which requires, at each iteration, the solution of two boundary value problem and of a quadratic constraint minimization problem. Indeed, introducing a basis $\{\phi_i\}_{i=1}^{N_h}$ in the discrete space $V_h$, the variational inequality in \eqref{eq:fulldiscrPOP} can be written in algebraic form. The resulting system of inequalities can be interpreted as the optimality condition of a minimization problem involving a quadratic cost functional in the compact set $[0,1]^{N_h}$, and is efficiently solved by means of the Primal-Dual Active Set method, introduced in \cite{art:bergounioux1997augemented} and applied in \cite{art:blank2013nonlocal} on a nonlocal Allen-Cahn equation. 
The final formulation of the reconstruction algorithm is the following:
\par
\begin{algorithm}[H]
\caption{Solution of the discrete Parabolic Obstacle Problem}
\label{al:POP}
\begin{algorithmic}[1]
\STATE{ Set $n = 0$ and $u_h^0 = u_0$, the initial guess for the inclusion \; }
\WHILE{$\norm{L^{\infty}(\Omega)}{u^n_h-u^{n-1}_h}>tol_{POP}$}
\STATE{solve the direct problem \eqref{eq:prob} with $u = u_h^n$\;}
\STATE{solve the adjoint problem \eqref{eq:adjoint} with $u = u_h^n$\;}
\STATE{compute $u^{n+1}$ solving \eqref{eq:fulldiscrPOP} via PDAS algorithm \;}
\STATE{update $n = n+1$;}
\ENDWHILE
\RETURN $u_h^{n}$
\end{algorithmic}
\end{algorithm}

\begin{remark}
It is a common practice to increase the performance of a reconstruction algorithm taking advantage of multiple measurements. In this context, it is possible to suppose the knowledge of $N_f$ different measurements of the electric potential on the boundary, $y_{meas,j}$ $j=1,\cdots,N_f$, associated to different source terms $f_j$. Therefore, instead of tackling the optimization of the mismatch functional $J$ as in \eqref{eq:opt}, it is possible to introduce the averaged cost functional $J^{TOT}(u) = \frac{1}{N_f}\sum_{j=1}^{N_f}J^j(u)$, where $J^j(u) = \half \norm{L^2(\partial \Omega)}{S_j(u) - y_{meas,j}}^2$, being $S_j(u)$ the solution of the direct problem \eqref{eq:prob} with source term $f = f_j$. The process of regularization, relaxation and computation of the optimality conditions is exactly the same as for $J$, and yields the same reconstruction algorithm as in Algorithm \ref{al:POP}, where at each timestep the solution of $N^f$ direct and adjoint problem must be computed.
\label{multi}
\end{remark}

\section{Numerical results}
\label{sec:numer}
In this section we report various results obtained applying Algorithm \ref{al:POP}. 
In all the numerical experiments, we consider $\Omega = (-1,1)^2$ and we introduce a shape regular tessellation $\mathcal{T}_h$ of triangles. Due to the lack of experimental measures of the boundary datum $y_{meas}$, we make use of synthetic data, i.e., we simulate the direct problem via the Finite Element method, considering the presence of an ischemic region of prescribed geometry, and extract the value on the boundary of the domain. In order to avoid to incurr an inverse crime (i.e. the performance of the reconstruction algorithm are improved by the fact that the exact data are synthetically generated with the same numerical scheme adopted in the algorithm) we introduce a more refined mesh $\mathcal{T}_h^{ex}$ on which the exact problem is solved, and interpolate the resulting datum $y_{meas}$ on the mesh $\mathcal{T}_h$. 
\par
The section is organised as follows: in Section \ref{first_results}, we describe the performance of Algorithm \ref{al:POP} for the minimization of the phase-field relaxed functional \eqref{eq:minrel}, showing that different and rather complicated geometry of inclusion can be satisfactorily reconstructed. In Section \ref{settings} we test the robustness of the reconstruction when some of the main parameters involved in the algorithm are modified. Moreover, particular attention is given to the use of a mesh-adaptative strategy.

\subsection{Parabolic Obstacle Problem: main results}
\label{first_results}
In the following test cases, we applied Algorithm \ref{al:POP} in order to reconstruct inclusions of different geometries, in order to investigate the effectiveness of the introduced strategy. We used the same computational mesh $\mathcal{T}_h$ (mesh size $h_{max} = 0.04$, nearly $6000$ elements) for the numerical solution of the boundary value problems involved in the procedure, whereas the mesh $\mathcal{T}_h^{ex}$ for the generation of each different synthetic datum associated to the different inclusions is specifically refined along the boundary of the respective ischemic region. According to Remark \ref{multi}, we make use of $N_f = 2$ different measurements, associated to the source terms $f_1(x,y) = x$ and $f_2(x,y) = y$. The main parameters for all the simulations lie in the ranges reported in Table \ref{tab:param}. We make use of the same relationship between $\varepsilon$ and $\tau$ as in \cite{art:deckelnick}.
\begin{table}[h!]
\centering
\begin{tabular}{c|c|c|c}
$\alpha$ & $\varepsilon$ & $\tau$ &  $tol_{POP}$ \\
\hline
$10^{-4} \div 10^{-3}$ & $1/(8 \pi)$ & $(0.01 \div 0.1)/\varepsilon$ &  $10^{-4}$
\end{tabular}
\caption{Range of the main parameters}
\label{tab:param}
\end{table}
The initial guess for each simulation is $u_0 \equiv 0$.
\par In Figure \ref{fig:circle} we report some of the iterations of Algorithm \ref{al:POP} for the reconstruction of a circular inclusion ($\alpha = 0.0001$, $\tau = 0.01/\varepsilon$). The boundary $\partial \omega$ is marked with a black line, which is superimposed to the contour plot of the approximation of the indicator function $u_h^n$ at different timesteps $n$. The algorithm converged after $N_{tot} = 568$ iterations, corresponding to a final (fictitious) time $T_{tot}= 1427.54$.  
\begin{figure}[h!]
			\centering
			\subfloat[$n = 30$]{
		    	\includegraphics[width=0.33\textwidth]{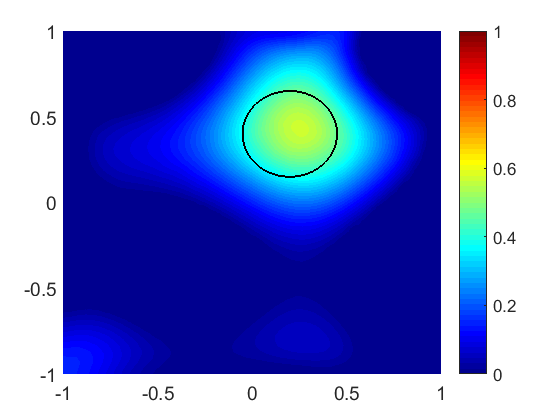}
			}
			\subfloat[$n = 90$]{
		    	\includegraphics[width=0.33\textwidth]{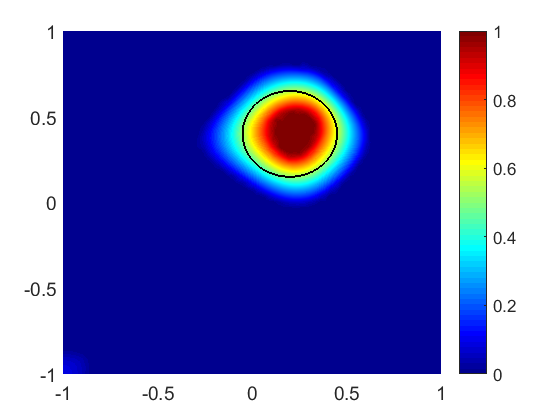}
			}
			\subfloat[$n = 568$]{
		    	\includegraphics[width=0.33\textwidth]{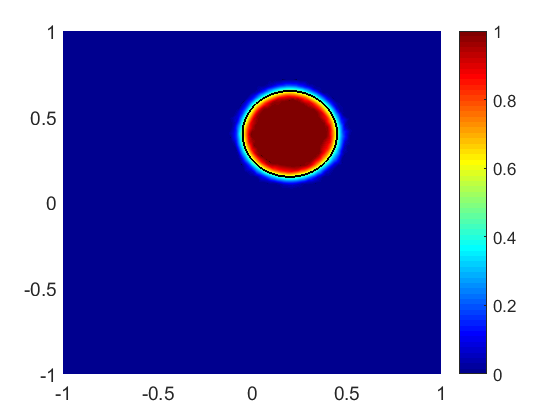}
			}
	\caption{Reconstruction of a circular inclusion: successive iterations}
	\label{fig:circle}
\end{figure}
In Figure \ref{fig:many} we investigate the ability of the algorithm to reconstruct inclusions of rather complicated geometry. For each test case, we show the contour plot of the final iteration of the reconstruction (the total number of iterations $N$ and the final time $T$ are reported in the caption), and the boundary of the exact inclusion is overlaid in black line. Moreover, each result is equipped with the graphic (in semilogarithmic scale) of the evolution of the cost functional $J_\varepsilon$, split into the components $J_{PDE}(u) = \half \norm{L^2(\partial \Omega)}{S(u)-y_{meas}}^2$ and $J_{regularization}(u) = \alpha \varepsilon \norm{L^2(\Omega)}{\nabla u}^2 + \frac{\alpha}{\varepsilon} \int_{\Omega}u(1-u)$.
\begin{figure}[h!]
			\centering
			\subfloat[$N_{tot} = 1500$, $T_{tot} = 753.98$, \protect\linebreak $\alpha = 0.001$, $\tau = 0.02/\varepsilon$]{
		    	\includegraphics[width=0.33\textwidth]{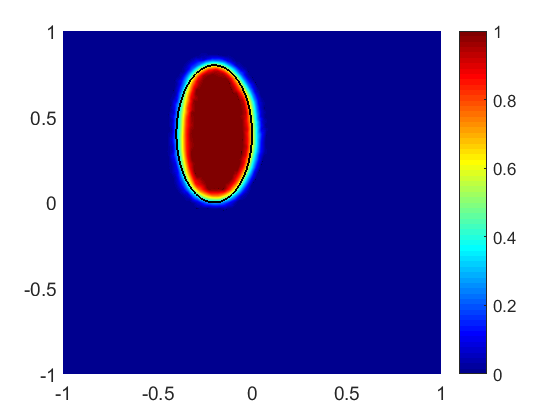}
			}
			\subfloat[$N_{tot} = 1272$, $T_{tot} = 639.38$, \protect\linebreak $\alpha = 0.0001$, $\tau = 0.02/\varepsilon$]{
		    	\includegraphics[width=0.33\textwidth]{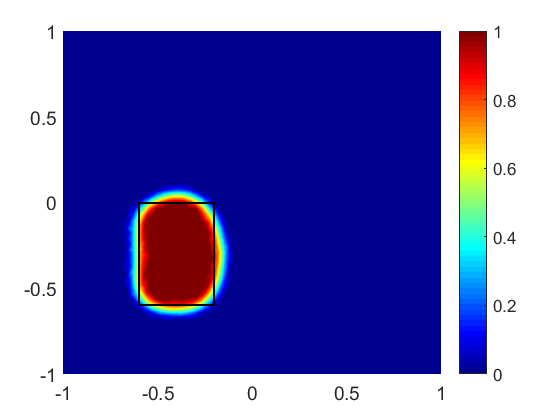}
			}
			\subfloat[$N_{tot} = 4670$, $T_{tot} = 2347.40$,\protect\linebreak $\alpha = 0.0001$, $\tau = 0.02/\varepsilon$]{
		    	\includegraphics[width=0.33\textwidth]{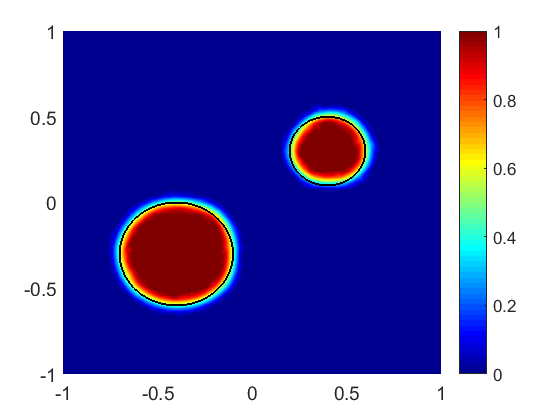}
			}
			\\
			\subfloat[Ellipse: evolution of $J_\varepsilon$]{
		    	\includegraphics[width=0.33\textwidth]{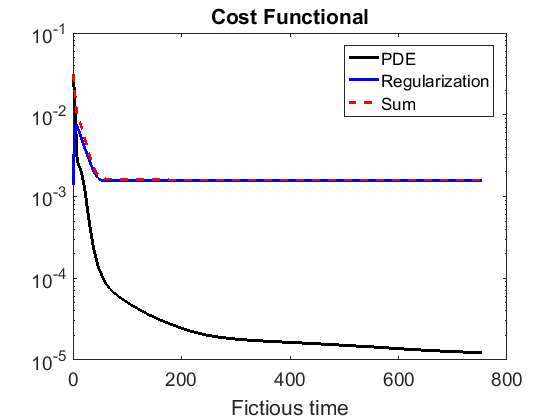}
			}
			\subfloat[Rectangle: evolution of $J_\varepsilon$]{
		    	\includegraphics[width=0.33\textwidth]{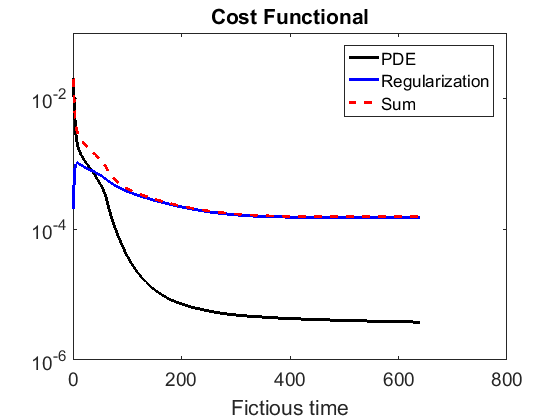}
			}
			\subfloat[Two circles: evolution of $J_\varepsilon$]{
		    	\includegraphics[width=0.33\textwidth]{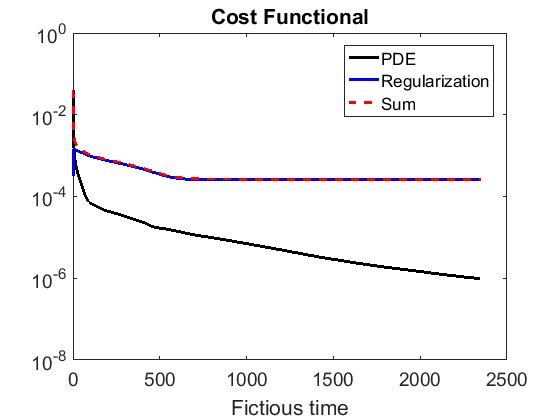}
			}
	\caption{Reconstruction of various inclusions}
	\label{fig:many}
\end{figure}
The reported results consist in approximations of minimizers of $J_\varepsilon$ in $\mathcal{K}$: they are smooth function and range between $0$ and $1$. They show large regions in which they attain the limit values $0$ and $1$, and a region of \textit{diffuse interface} between them, whose thickness is about $\varepsilon/2$. As Figures \ref{fig:circle} and \ref{fig:many} show, the algorithm is capable of reconstructing inclusion of rather complicated geometry. The identification of smooth inclusion is performed with higher precision, whereas it seems that the accuracy is low in presence of sharp corners. We point out that we don't need to have any \textit{a priori} knowledge on the topology of the inclusion $\omega$, i.e., the number of connected components is correctly identified. 

\subsection{Parabolic Obstacle Problem: setting of the parameters}
\label{settings}
This section is devoted to the description of the performance of Algorithm \ref{al:POP} when some of the main parameters and settings are perturbed. 
\par In particular, we start assessing that the final result of the reconstruction is independent of the initial guess imposed as a starting point of the Parabolic Obstacle problem. In Figure \ref{fig:starting} we compare the behaviour of the algorithm applied to the reconstruction of a circular inclusion (the same as in Figure \ref{fig:circle}), where we impose a different initial datum with respect to the constant zero function. In the first experiment, we start from an initial datum which is the indicator function of an arbitrarily chosen region. In the second one, we impose as a starting point the indicator function of a sublevel of the \textit{topological gradient} of the cost functional $J$. As investigated in \cite{art:BMR}, the topological gradient is a powerful tool for the detection of small-size inclusions, which yield a small perturbation in the cost functional with respect to the background (unperturbed) case. The position of a small inclusion is easily identified by searching for the point where the topological gradient of $J$ attains its (negative) minimum. As the information given by the topological gradient $G$ has shown to be remarkable even in the case of large-size inclusions (see, e.g., \cite{art:BCCMR}, \cite{art:rapun}), we take advantage of it by computing $G$ (see Theorem 3.1 in \cite{art:BMR}), setting a threshold $G_{thr}$ and defining $u_0 = \chi_{\{G \leq G_{thr}\}}$. 
\begin{figure}[h!]
			\centering
			\subfloat[Initial guess: arbitrary]{
		    	\includegraphics[width=0.33\textwidth]{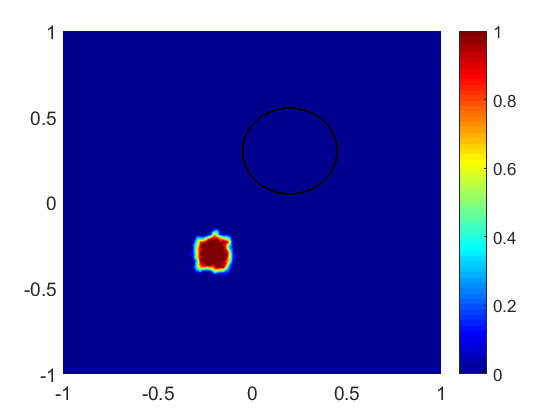}
			}
			\subfloat[Intermediate: $n = 60$]{
		    	\includegraphics[width=0.33\textwidth]{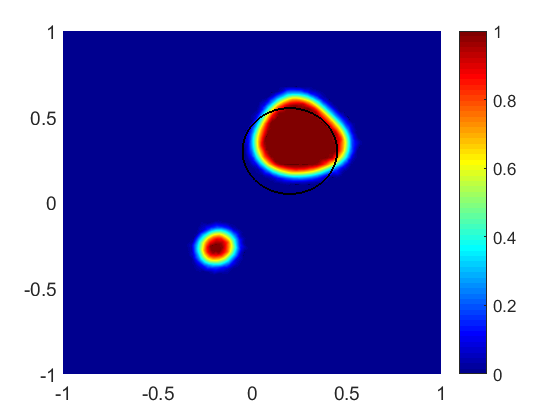}
			}
			\subfloat[Final: $N_{tot} = 661$, $T_{tot} = 1661.27$]{
		    	\includegraphics[width=0.33\textwidth]{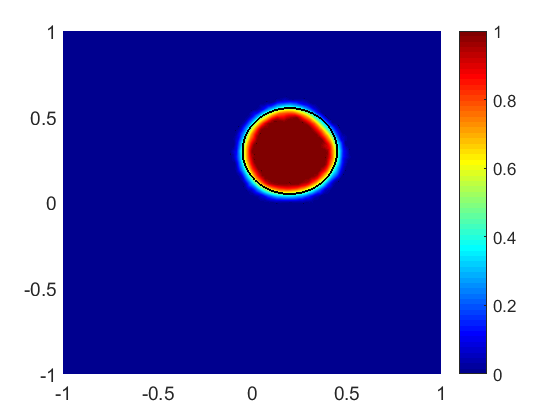}
			}
			\\
			\subfloat[Initial guess: topological]{
		    	\includegraphics[width=0.33\textwidth]{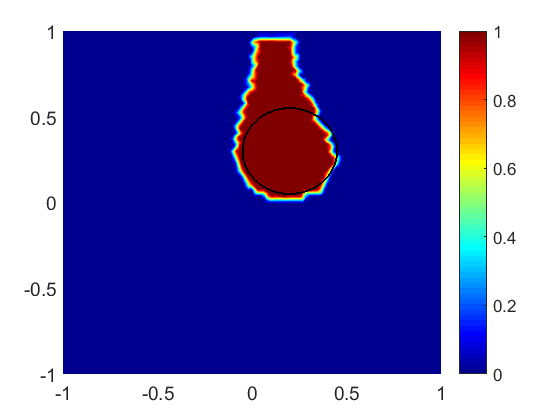}
			}
			\subfloat[Intermediate: $n = 50$]{
		    	\includegraphics[width=0.33\textwidth]{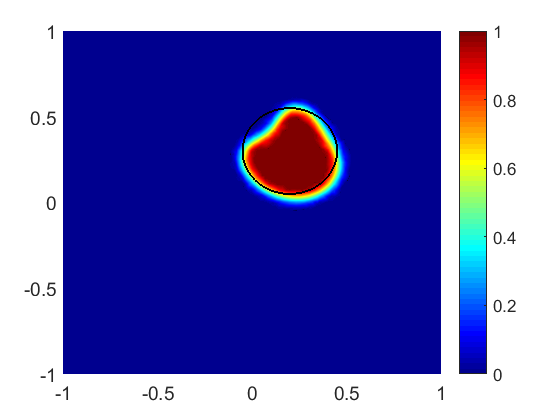}
			}
			\subfloat[Final: $N_{tot} = 489$, $T_{tot} = 1228.99$]{
		    	\includegraphics[width=0.33\textwidth]{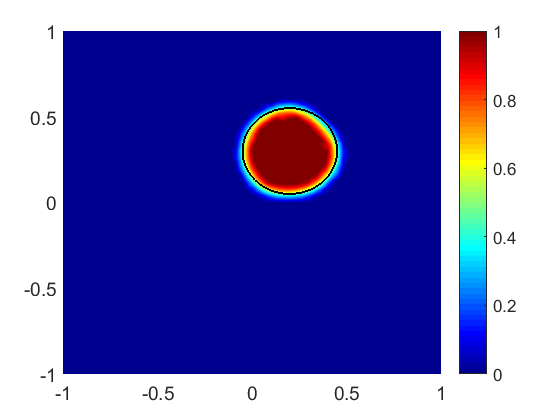}
			}
	\caption{Reconstruction of a circular inclusion with different initial conditions}
	\label{fig:starting}
	\end{figure}
The results reported in Figure \ref{fig:starting} show the starting point of the algorithm, an intermediate iteration and the final reconstruction. In both cases we set $\alpha = 0.001$, $\varepsilon = 1/(8 \pi)$ and $\tau = 0.1/\varepsilon$.We underline that the result in each case is similar to the one depicted in Figure \ref{fig:circle}, but through the second strategy it was possible to perform a smaller number of iterations. 
\par
Another interesting investigation is the comparison of the results obtained when perturbing the relaxation parameter $\varepsilon$. In Figure \ref{fig:eps} we report the final reconstruction of an ellipse-shaped inclusion when setting $\varepsilon = \frac{1}{4\pi}, \frac{1}{8\pi}, \frac{1}{8\pi}$.
\begin{figure}[h!]
			\centering
			\subfloat[$\varepsilon = \frac{1}{4\pi}$: $N_{tot} = 358$, $T_{tot} = 224.94$]{
		    	\includegraphics[width=0.33\textwidth]{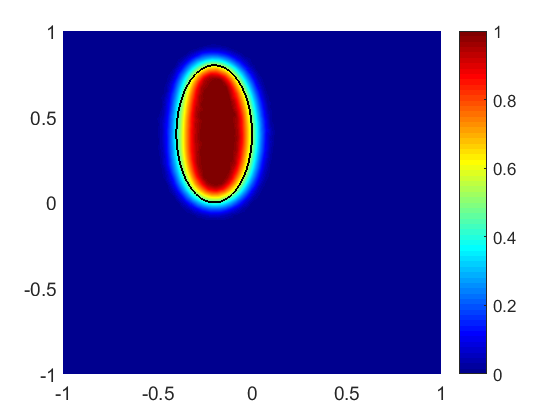}
			}
			\subfloat[$\varepsilon = \frac{1}{8\pi}$: $N_{tot} = 1500$, $T_{tot} = 753.98$]{
		    	\includegraphics[width=0.33\textwidth]{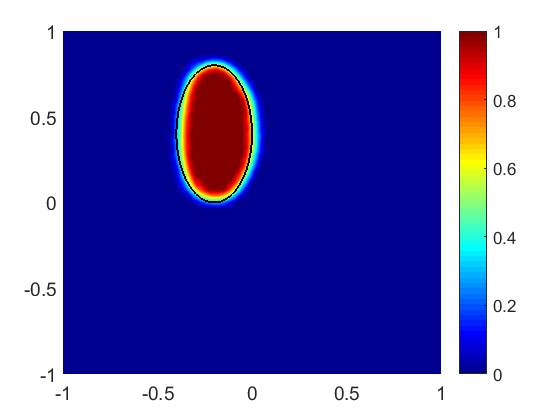}
			}
			\subfloat[$\varepsilon = \frac{1}{16\pi}$: $N_{tot} = 3514$, $T_{tot} = 1766.33$]{
		    	\includegraphics[width=0.33\textwidth]{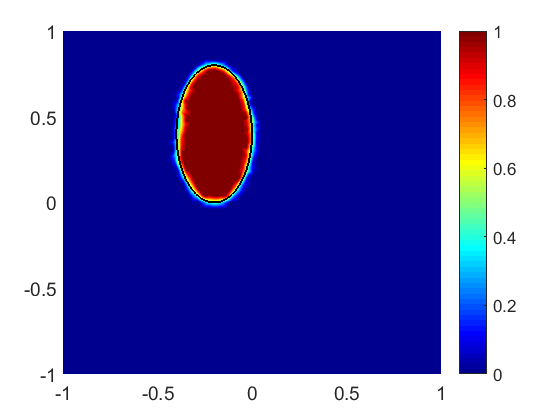}
			}
			\caption{Reconstruction of a circular inclusion with different $\varepsilon$}
	\label{fig:eps}
\end{figure}
As expected, it is possible to remark that the thickness of the diffuse interface region decreases as $\varepsilon$ decreases. Nevertheless, one must take into account the size of the computational mesh $\mathcal{T}_h$: in the last test, the thickness of the region in which the final iteration $u_h^{N_tot}$ increases from $0$ to $1$ is of the same order of magnitude as $h_{max}$. This is rather likely the reason why the edge of the reconstructed inclusion appears to be irregular and jagged. A natural strategy to avoid the problem would be to make use of a finer mesh, e.g., ensuring that $h_{max}<\varepsilon/10$; however, that could result in an extremely high computational effort. It is possible to overcome this drawback by introducing an adaptive mesh refinement strategy, i.e., by locally refining the mesh close to the region of the detected edges. In Figure \ref{fig:adapt} we compare the result obtained when approximating a rectangular and a circular inclusion with $\varepsilon = \frac{1}{16\pi}$ on the reference mesh or through a process of mesh adaptation. We invoked a goal-oriented mesh adaptation algorithm each $N_{adapt} = 50$ iterations, requiring for a higher refinement of the grid in proximity of higher values of $|\nabla u_h^n|$ and for a lower refinement in the regions where $u_h^n$ is approximatively constant. This allows to have more precise reconstruction even for small $\varepsilon$, almost without increasing the global number of elements of the mesh. In Figure \ref{fig:adapt}, we also report the final configuration of the refined computational mesh.  
\begin{figure}[h!]
			\centering
			\subfloat[No adaptation, $N_{tot} = 2442$, $T_{tot} = 2454.97$]{
		    	\includegraphics[width=0.33\textwidth]{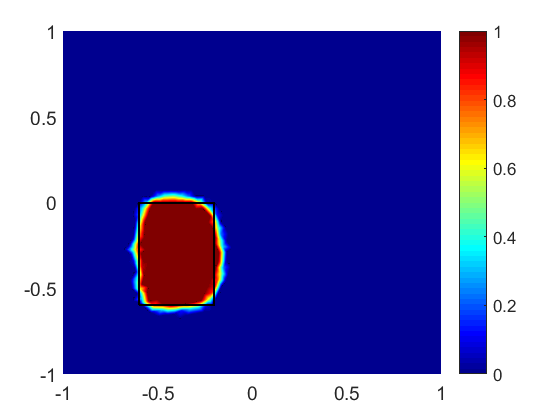}
			}
			\subfloat[Adaptation, $N_{tot} = 2189$, $T_{tot} = 2200.62$]{
		    	\includegraphics[width=0.33\textwidth]{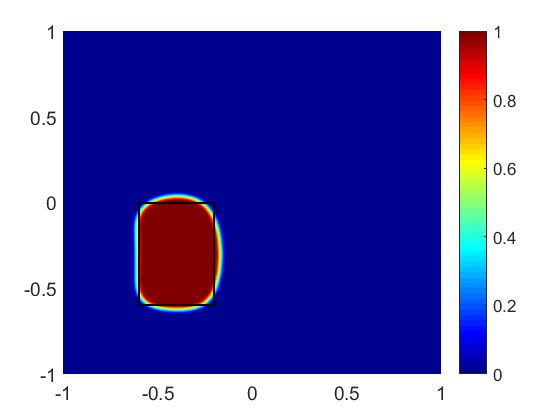}
			}
			\subfloat[Final adapted mesh]{
		    	\includegraphics[width=0.33\textwidth]{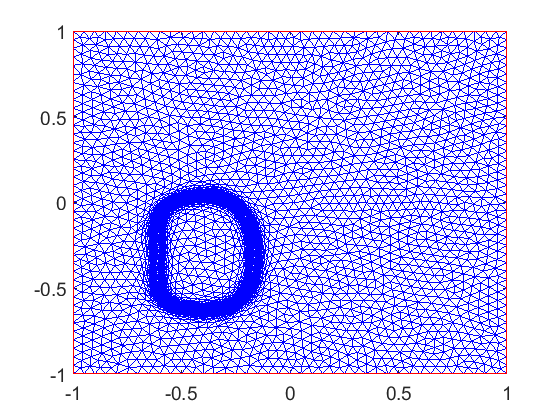}
			}
			\\
			\subfloat[No adaptation, $N_{tot} = 2210$, $T_{tot} = 2221.73$]{
		    	\includegraphics[width=0.33\textwidth]{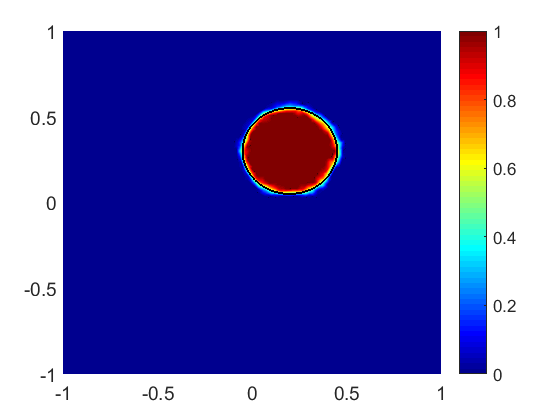}
			}
			\subfloat[Adaptation, $N_{tot} = 2306$, $T_{tot} = 2318.24$]{
		    	\includegraphics[width=0.33\textwidth]{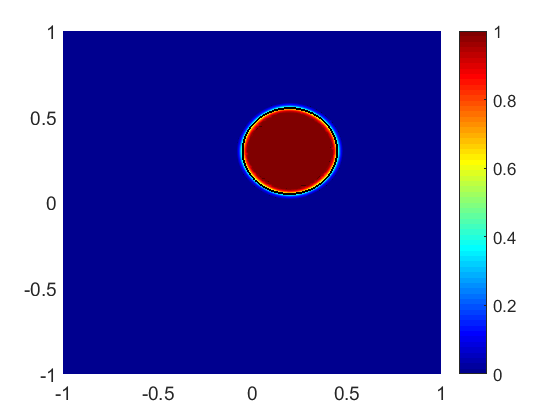}
			}
			\subfloat[Final adapted mesh]{
		    	\includegraphics[width=0.33\textwidth]{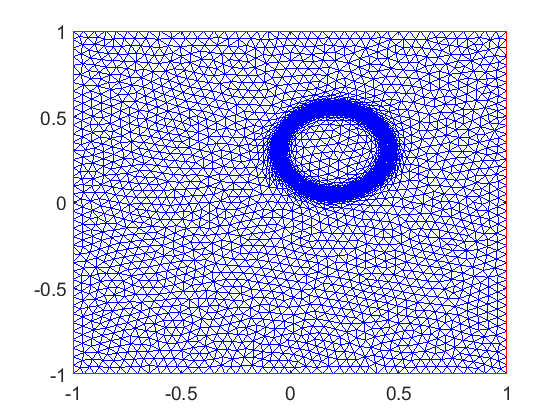}
			}
	\caption{Mesh adaptation: result comparison}
	\label{fig:adapt}
\end{figure}

\section{Comparison with the Shape Derivative approach}
\label{sec:shape}
In the previous sections, we have analyzed in detail the phase-field relaxation of the minimization problem expressed in \eqref{eq:minreg}. We now aim at describing the relationship between this method and the Shape Derivative approach, which has become a very common strategy to tackle the reconstruction of discontinuous coefficients. The algorithm based on the shape derivative consists in updating the shape of the inclusion to be reconstructed by perturbing its boundary along the directions of the vector field which causes the greatest descent of the cost functional, that can be deduced by computing the shape derivative of the functional itself. In this section, we first theoretically investigate the relationship between the shape derivative of the cost functional $J_{reg}$ and the Fréchet derivative of $\Je$ and then report a comparison between the numerical results of the two algorithms in a set of benchmark cases.

\subsection{Sharp interface limit of the Optimality Conditions}
\label{sec:sharplim}
In order to study the relationship between the optimality conditions in the phase-field approach and the ones derived in the sharp case, we follow an analogous approach as in \cite{art:garcke2016}. First of all, in Proposition \ref{prop:sharpOC} we introduce the necessary optimality condition for the sharp problem \eqref{eq:minreg}, taking advantage of the computation of the material derivative of the cost functional. We then define in Proposition \ref{prop:pfOC} similar optimality conditions for the relaxed problem \eqref{eq:minrel}, which are related but not equivalent to the one stated in \eqref{eq:OC}-\eqref{eq:VI} through the Fréchet derivative. In Proposition \ref{prop:SIL} we finally assess the convergence of the phase-field optimality condition to the sharp one when $\varepsilon \rightarrow 0$.
\par
For the sake of simplicity, in this section we will refer to $J_{reg}$ as $J$. Consider the minimization problem (as in \eqref{eq:minreg}):
\begin{equation}
	\argmin_{u \in X_{0,1}} J(u); \quad J(u) = \half \norm{L^2(\partial\Omega)}{S(u) - y_{meas}}^2 + \alpha TV(u).
\label{eq:minreg2}
\end{equation}
Since $u \in X_{0,1}$ implies that $u = \chi_\omega$, being $\omega$ a finite-perimeter subset of $\Omega$, we can perturb $u$ by means of a vector field $\phi_t: \Omega \rightarrow \R^2$, $\phi_t(x) = x + tV(x)$, being 
\begin{equation}
V \in C^1(\Omega) \text{ s.t. }  V(x) = 0 \text{ in }\Omega^{d_0} = \{x \in \Omega \ s.t. \ dist(x,\partial \Omega) \leq d_0\}.
\label{eq:V}
\end{equation}
Consider the family of functions $\{u_t\}$: $u_t = u \circ \phi_t^{-1}$: we can compute the shape derivative of the functional $J$ in $u$ along the direction $V$ (see \cite{book:DZ2011}) as
\begin{equation}
	DJ(u)[V] := \lim_{t \rightarrow 0} \frac{J(u_t)-J(u)}{t},
\label{eq:shapedersharpDef}
\end{equation}  
where $J(u_t)$ is the cost functional evaluated in the deformed domain $\Omega_t=\phi_t(\Omega)$ but, according to \eqref{eq:V}, $\Omega_t$ and $\Omega$ are the same set, thus we do not adopt a different notation.
We prove the following result:
\begin{proposition}
If $u$ is a solution of \eqref{eq:minreg2} and $f \in L^2(\Omega)$ satisfies the hypotheses in Proposition \ref{prop:geq} or \ref{prop:geq2}, then
\begin{equation}
DJ(u)[V] = 0 \qquad \textit{ for all the smooth vector fields $V$},
\label{eq:OCmatder}
\end{equation}
The shape derivative is given by:
\begin{equation}
	DJ(u)[V] = \int_{\partial \Omega}(S(u) - y_{meas})\dot{S}(u)[V] + \int_\Omega(div V - DV \normal \cdot \normal)d|Du|,
\label{eq:matdersharp}
\end{equation}
where $d|Du|=\delta_{\partial \omega} dx$, $\normal$ is the generalized unit normal vector (see \cite{book:giusti}) and $\dot{S}(u)[V] =: \dot{S}$, the material derivative of the solution map, solves
\begin{equation}
\begin{aligned}
\int_\Omega a(u) \nabla \dot{S} \cdot \nabla v + \int_\Omega b(u)3 S(u)^2 \dot{S} v = &-\int_\Omega a(u)\mathcal A \nabla S(u) \cdot \nabla v -\int_\Omega b(u)S(u)^3 v div V + \\
&\int_\Omega div(fV)v  \qquad \forall v \in H^1(\Omega),
\label{eq:matder}
\end{aligned}
\end{equation}
being $\mathcal{A} = div V - (DV + DV^T)$. 
\label{prop:sharpOC}
\end{proposition}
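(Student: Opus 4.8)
The plan is to split $J=J_1+\alpha\,TV$ into the misfit term $J_1(u)=\half\norm{L^2(\partial\Omega)}{S(u)-y_{meas}}^2$ and the perimeter term, and to differentiate each along the flow $\phi_t(x)=x+tV(x)$ separately. First I would record the variational mechanism behind \eqref{eq:OCmatder}: since $u=\chi_\omega$ with $\omega$ of finite perimeter, for every admissible $V$ as in \eqref{eq:V} the deformed function $u_t=u\circ\phi_t^{-1}=\chi_{\phi_t(\omega)}$ still belongs to $X_{0,1}$ and is therefore a competitor in \eqref{eq:minreg2}. Hence $g(t):=J(u_t)$ has a global minimum at $t=0$; once the differentiability of $g$ is established, both one-sided derivatives must vanish and $DJ(u)[V]=g'(0)=0$. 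The substance of the proof is thus to show that $g$ is differentiable and to identify $g'(0)$ with the right-hand side of \eqref{eq:matdersharp}.

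For the misfit term I would exploit that $V$ vanishes in the collar $\Omega^{d_0}$, so $\partial\Omega$ and a neighbourhood of it are fixed by $\phi_t$; the domain of integration in $J_1$ does not move, and differentiating under the integral sign gives $\frac{d}{dt}J_1(u_t)|_{t=0}=\int_{\partial\Omega}(S(u)-y_{meas})\dot S$, where $\dot S=\dot S(u)[V]$ is the material derivative of $y_t=S(u_t)$ (on $\partial\Omega$ the material and shape derivatives coincide, since $V=0$ there). To characterise $\dot S$ I would pull the state equation \eqref{eq:prob} for $y_t$ back to the reference configuration via $x=\phi_t(\xi)$, testing against $\varphi=v\circ\phi_t^{-1}$ so that the reference test function $v$ is independent of $t$. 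Writing $\tilde y_t=y_t\circ\phi_t$ and $A_t=\det(D\phi_t)\,(D\phi_t)^{-1}(D\phi_t)^{-T}$, the transported weak form becomes
\[
\int_\Omega a(u)A_t\nabla\tilde y_t\cdot\nabla v+\int_\Omega b(u)\tilde y_t^3 v\,\det(D\phi_t)=\int_\Omega (f\circ\phi_t)\,v\,\det(D\phi_t)\qquad\forall v\in H^1(\Omega).
\]
Differentiating at $t=0$ and using $\frac{d}{dt}\det(D\phi_t)|_{0}=\diverg V$, $\frac{d}{dt}A_t|_{0}=\mathcal A=\diverg V-(DV+DV^T)$ and $\frac{d}{dt}(f\circ\phi_t)|_{0}=\nabla f\cdot V$ (so that the two source contributions combine into $\int_\Omega\diverg(fV)v$) yields exactly equation \eqref{eq:matder} for $\dot S$. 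Its well-posedness follows from the same coercivity estimate used for the linearised problem in Proposition \ref{prop:optcond}, namely from $a(u)\geq k$, the nonnegativity of $3b(u)S(u)^2$, Proposition \ref{prop:geq} and the Poincaré inequality of Lemma \ref{Lemma1}.

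For the perimeter term I would invoke the classical first variation of the perimeter of a finite-perimeter set under a smooth flow, $\frac{d}{dt}TV(u_t)|_{t=0}=\int_{\partial\omega}\diverg_\tau V\,d\mathcal H^1$, with $\diverg_\tau V=\diverg V-DV\normal\cdot\normal$ the tangential divergence along the reduced boundary. Recalling $d|Du|=\delta_{\partial\omega}\,dx$, the contribution of the regularisation is $\alpha\int_\Omega(\diverg V-DV\normal\cdot\normal)\,d|Du|$, which is the second term of \eqref{eq:matdersharp}. Adding the two contributions gives the formula \eqref{eq:matdersharp}, and the minimality argument of the first paragraph then delivers \eqref{eq:OCmatder}.

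The main obstacle I expect is rigorously justifying the existence and $H^1$-differentiability of $t\mapsto\tilde y_t$, which simultaneously legitimises differentiating under the integral and identifies $\dot S$ as the solution of \eqref{eq:matder}. Because of the cubic term $b(u)\tilde y_t^3$ one cannot simply quote a linear implicit-function argument; instead I would establish uniform-in-$t$ bounds and a Lipschitz estimate for the difference quotients $(\tilde y_t-y)/t$ in $H^1(\Omega)$, arguing as in the differentiability proof of Proposition \ref{prop:optcond} (splitting the nonlinear increment through $q=\tilde y_t^2+\tilde y_t\,y+y^2$ and controlling it by the Poincaré–Sobolev machinery), and only then pass to the limit $t\to0$ to recover \eqref{eq:matder}.
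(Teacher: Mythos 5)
Your proposal is correct and follows essentially the same route as the paper: pulling the state equation back through $\phi_t$ with $A(t)=D\phi_t^{-T}D\phi_t^{-1}|\det D\phi_t|$, bounding the difference quotients $(\tilde y_t-y)/t$ uniformly in $H^1$ via the coercivity coming from $a(u)\geq k$, Proposition \ref{prop:geq} and Lemma \ref{Lemma1}, passing to the limit to identify \eqref{eq:matder}, handling the misfit term through the fixed boundary (since $V=0$ on $\Omega^{d_0}$), and invoking the first variation of perimeter (Giusti) for the total variation term. The only cosmetic difference is that the paper additionally upgrades the difference quotients to strong $H^1$ convergence via an energy identity, whereas your weak-convergence plan already suffices to identify $\dot S$ and, by compactness of the trace operator, to differentiate the boundary misfit.
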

\begin{proof}
We start by deriving the formula of the material derivative of the solution map. 
Define $S_0 = S(u)$ and $S_t: \Omega \rightarrow \R$, $S_t = S(u_t) \circ \phi_t$.
Then, applying the change of variables induced by the map $\phi_t$, it holds that
\begin{equation}
	\int_{\Omega} a(u) A(t) \nabla S_t \cdot \nabla v + \int_{\Omega} b(u)S_t^3 v |det D\phi_t| = \int_{\Omega} (f\cdot \phi_t) v |det D\phi_t| \qquad \forall v \in H^1(\Omega),
\label{eq:2}
\end{equation}
where $A(t) = D\phi_t^{-T} D\phi_t^{-1} |det D\phi_t|$.
By computation,
\[
\frac{d}{dt} A(t) = \mathcal{A} = (div V)I - (DV^t+DV) \qquad \text{and} \qquad \frac{d}{dt} |det D\phi_t| = div V.
\] 
Subtract \eqref{eq:prob} from \eqref{eq:2} and divide by $t$: then $w_t = \frac{S_t - S_0}{t}$ is the solution of
\begin{equation}
\begin{aligned}
  \int_\Omega a(u) A(t) \nabla w_t \cdot \nabla v &+ \int_\Omega b(u)q_t w_t v |det(D\phi_t)| = -\int_{\Omega}a(u) \frac{A(t)-I}{t} \nabla S_0 \cdot \nabla v \\ &- \int_{\Omega} \frac{|det(D\phi_t)|-1}{t}b(u)S_0^3v + \int_{\Omega} \frac{1}{t}(f\circ \phi_t) v |det(D\phi_t)|  -\int_{\Omega} \frac{1}{t} fv \qquad \forall v \in H^1(\Omega),
\end{aligned}
\label{eq:4}
\end{equation}
where the norm of the right-hand side in the dual space of $H^1(\Omega)$ is bounded by
	\[
	\begin{aligned}
		\norm{L^{\infty}(\Omega)}{\frac{A-I}{t}}&\norm{H^1(\Omega)}{S_0} + \norm{L^{\infty}(\Omega)}{\frac{|det(D\phi_t)|-1}{t}}\norm{H^1(\Omega)}{S_0} \\&+ \norm{L^{\infty}(\Omega)}{\frac{|det(D\phi_t)|-1}{t}}\norm{L^2(\Omega)}{f} + C(\norm{C(\Omega)}{V})\norm{H^1(\Omega)}{f} \leq C_F,
	\end{aligned}
	\]
	being $C_F$ independent of $t$. Moreover, the matrix $A(t)$ is symmetric positive definite: $(A(t) y) \cdot y\geq \half \norm{}{y}^2$ $\forall y \in \R^2, \forall t$. Together with the property that $q_t = u_t^2 + u_t u + u^2 \geq \frac{3}{4}u^2$, and thanks to Proposition \ref{prop:geq} and to the Poincaré inequality in Lemma \ref{Lemma1},  
\[
	\norm{H^1}{w_t}^2 \leq C k\norm{L^2}{\nabla w_t}^2 +\frac{3}{4}Q \norm{L^2(\Omega^*)}{w_t}^2 \leq C_F\norm{H^1}{w_t}.
\]
Thus, $\norm{H^1}{w_t}$ is bounded independently of $t$, from which it follows that $\norm{H^1(\Omega)}{S_t - S_0} \leq C t$ and that every sequence $ \{w_n\} = \left\{ w_{t_n}, \  t_n \rightarrow 0 \right\}$ is bounded in $H^1(\Omega)$, thus $w_t \xrightharpoonup{H^1} w \in H^1(\Omega)$. We aim at proving that $w$ is also the limit of $w_t$ in the strong convergence, which entails that
\[
	\dot{S}(u)[V]:= \lim_{t\rightarrow 0} \frac{S_t - S_0}{t} = w.
\]
First of all, we show that $w$ is the solution of problem \eqref{eq:matder}. It follows from \eqref{eq:4}, since $q_t w_t = \frac{1}{t}(S_t^3 - S_0^3) = \frac{1}{t}((S_0 +t w_t)^3 - S_0^3) = 3 S_0^2 w_t + 3t S_0 w_t^2 + t^2 w_t^3$, that
\begin{equation}
\begin{aligned}
 	\int_{\Omega} a(u) A(t) \nabla w_t \cdot \nabla v &+ \int_{\Omega} b(u) 3 S_0^2 w_t v |det D\phi_t| = -\int_{\Omega}a(u) \frac{A(t)-I}{t} \nabla S_0 \cdot \nabla v \\ &-\int_{\Omega} \frac{|det D\phi_t|-1}{t} b(u)S_0^3v - \int_{\Omega} b(u) 3t S_0 w_t^2  v |det D\phi_t| - \int_{\Omega} b(u) t^2 w_t^3 v |det D\phi_t|\\ &+ \int_{\Omega} (f \circ \phi_t)\frac{|det D\phi_t|-1}{t} v -\int_{\Omega}\frac{(f\circ \phi_t) - f}{t}v  \qquad \forall v \in H^1(\Omega).
\end{aligned}
\label{eq:4.5}
\end{equation}
Taking the limit as $t \rightarrow 0$ and by the weak convergence of $w_t$ in $H^1$, we recover the same expression as in \eqref{eq:matder}. One may eventually show that $w_t\xrightarrow{H^1} w$. In order to do this we start proving that
\begin{equation}
	\int_\Omega a(u) A(t)|\nabla w_t|^2 + \int_\Omega b(u)|det D\phi_t|3S_0^2w_t^2 \rightarrow \int_\Omega a(u) |\nabla w|^2 + \int_\Omega b(u)3S_0^2w^2.
\label{eq:6}
\end{equation}
Indeed, take \eqref{eq:4.5} and substitute $v = w_t$: using the weak convergence of $w_t$ in the right-hand side, we obtain that 
\[
\begin{aligned}
\int_\Omega a(u) A(t)|\nabla w_t|^2 &+ \int_\Omega b(u)|det D\phi_t|3S_0^2w_t^2 \rightarrow  -\int_{\Omega}a(u) \mathcal{A} \nabla S_0 \cdot \nabla w -\int_{\Omega} divV \ b(u)S_0^3w \\
	&+ \int_{\Omega} f w\ divV - \int_{\Omega}\nabla f \cdot V w  \overset{\eqref{eq:matder}}{=} \int_{\Omega} a(u) |\nabla w|^2 + \int_{\Omega} b(u) 3 S_0^2 w^2. 
\end{aligned}
\]
We then compute:
\begin{equation}
\begin{aligned}
\int_\Omega a(u)& A(t) |\nabla(w_t - w)|^2 + \int_\Omega b(u) 3 S_0^2 (w_t - w)^2 |det D\phi_t| =
\\ & \int_\Omega a(u) A(t) |\nabla w_t|^2 + \int_\Omega a(u) A(t) |\nabla w|^2 - 2 \int_\Omega a(u) A(t) \nabla w_t \cdot \nabla w 
\\ & +\int_\Omega b(u)3S_0^2 w_t^2 |det D\phi_t| + \int_\Omega b(u) 3S_0^2 w^2  |det D\phi_t|- 2\int_\Omega b(u)3S_0^2w_t w |det D\phi_t|.
\end{aligned}
\label{eq:eq7}
\end{equation}
Using \eqref{eq:6}, the convergence of $A$ to $I$ and of $|det D\phi_t|$ to $1$, and the fact that $w_t \xrightharpoonup{H^1} w$, we derive that
\[
\int_\Omega a(u) |\nabla(w_t - w)|^2 + \int_\Omega b(u) 3S_0^2(w_t - w)^2 \rightarrow 0
\]
A combination of the Proposition \ref{prop:geq} and of the Poincarè inequality in Lemma \ref{Lemma1} allows to conclude that also $\norm{H^1}{w_t - w} \rightarrow 0$.
\par
We now prove the necessary optimality conditions for the optimization problem \eqref{eq:minreg2}.
The derivative of the quadratic part of the cost functional $J$ can be easily computed by means of the material derivative of the solution map:
\begin{equation}
\begin{aligned}
	\lim_{t \rightarrow 0} \half &\int_{\partial \Omega}\frac{(S(u_t) - y_{meas})^2 |det(D\phi_t)| - (S_0-y_{meas})^2}{t} \qquad \textit{(since $S(u_t) = S_t$ on $\partial \Omega$)}\\
	&= \lim_{t \rightarrow 0} \half \int_{\partial \Omega}(S_t - y_{meas})^2 \frac{|det(D\phi_t)| - 1}{t} +  \lim_{t \rightarrow 0} \half \int_{\partial \Omega}\frac{(S_t - y_{meas})^2 - (S_0-y_{meas})^2}{t}
 \\&= \half \int_{\partial \Omega}(S_0-y_{meas})^2 div V + \int_{\partial \Omega} \dot{S}(u)[V](S_0-y_{meas}),
\end{aligned}
\label{eq:7}
\end{equation}
and the first integral in the latter expression vanishes since $V=0$ on $\Omega_{d_0}$.
On the other hand, using Lemma 10.1 of \cite{book:giusti} and the remark 10.2, we recover the expression for the derivative of the Total Variation of $u$, which is the same reported in \eqref{eq:matdersharp}.
\end{proof}

The optimality conditions reported in \eqref{eq:OCmatder} are, at the best of our knowledge, the most general result which can be obtained in this case, i.e. by simply assuming that $u =\chi_\omega$ and $\omega$ is a set of finite perimeter. We point out that, assuming more \textit{a priori} knowledge on the $u$, it is possible to recover from \eqref{eq:matdersharp} the expression of the \textit{shape derivative} of the cost functional $J$. The following proposition can be rigorously proved by means of an analogous argument as in \cite{art:abfkl}, except for the derivative of the perimeter penalization, which can be found in Section 9.4.3 in \cite{book:DZ2011}.
\begin{proposition}
Suppose that $\omega \subset \Omega$ is open, connected, well separated from the boundary $\partial \Omega$ and regular (at least of class $C^2$), and $u = \chi_\omega$. Then, the expression of the shape derivative of the cost functional $J$ along a smooth vector field $V$ is:
\begin{equation}
	DJ(u)[V] = \int_{\partial \omega} \left[ (1-k)\left( \nabla_{\tau}S(u) \cdot \nabla_\tau w  + \frac{1}{k} \nabla_\normal S(u)^e\cdot \nabla_\normal w^e \right) + S(u)^3 w  + h \right] V\cdot \normal \qquad \forall V,
\label{eq:shapeder}
\end{equation}
where $w$ is the solution of the adjoint problem (see \eqref{eq:adjoint}). The gradients $\nabla S(u)$ and $\nabla w$ are decomposed in the normal and tangential component with respect to the boundary $\partial \omega$, and due to the transmission condition of the direct problem their normal components are discontinuos across $\partial \omega$: the valued assumed in $\Omega \setminus \omega$ is marked as $\nabla_\normal S(u)^e$. The term $h$ is instead the mean curvature of the boundary.
\label{prop:shapeder}
\end{proposition}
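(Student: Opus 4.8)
The plan is to begin from the general shape-derivative expression \eqref{eq:matdersharp} already proved in Proposition \ref{prop:sharpOC} and to use the additional smoothness of $\omega$ only to rewrite each of its terms as an integral over the interface $\partial\omega$. The first move concerns the quadratic misfit contribution $\int_{\partial\Omega}(S(u)-y_{meas})\dot S(u)[V]$. Choosing $\psi=\dot S(u)[V]$ in the adjoint problem \eqref{eq:adjoint} and $v=w$ in the material-derivative equation \eqref{eq:matder}, the two bilinear forms on the left coincide, so this boundary term may be replaced by the volume expression
\[
-\int_\Omega a(u)\,\mathcal A\,\nabla S(u)\cdot\nabla w-\int_\Omega b(u)\,S(u)^3\,w\,\mathrm{div}\,V+\int_\Omega \mathrm{div}(fV)\,w,
\]
which no longer involves the (only implicitly defined) material derivative and depends on $V$ through $\mathcal A$ and $\mathrm{div}\,V$ alone.

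Next I would localize this expression on $\partial\omega$. Under the hypothesis that $\omega$ is of class $C^2$, elliptic regularity for the transmission problem gives $S(u),w\in H^2(\omega)\cap H^2(\Omega\setminus\omega)$, so each volume integral can be split over $\omega$ and $\Omega\setminus\omega$ and integrated by parts. In each subdomain $S(u)$ and $w$ solve the strong Euler--Lagrange equations (namely $-k\Delta S(u)=f$, $-k\Delta w=0$ in $\omega$ and $-\Delta S(u)+S(u)^3=f$, $-\Delta w+3S(u)^2w=0$ in $\Omega\setminus\omega$), so all bulk contributions cancel; since $V\equiv0$ near $\partial\Omega$ by \eqref{eq:V}, the only surviving terms are supported on $\partial\omega$. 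I would then decompose $\nabla S(u)$ and $\nabla w$ into their tangential and normal parts: the tangential components are continuous because $S(u)$ and $w$ are, while the normal fluxes obey the transmission conditions $k\,\nabla_\normal S(u)^i=\nabla_\normal S(u)^e$ and $k\,\nabla_\normal w^i=\nabla_\normal w^e$. Recalling that the jump of the diffusion coefficient across $\partial\omega$ is $1-k$ and that of the reaction coefficient is $1$, collecting the interface terms and inserting the flux relations produces precisely the factor $(1-k)$ in front of $\nabla_\tau S(u)\cdot\nabla_\tau w$, the factor $(1-k)/k$ in front of the exterior normal product $\nabla_\normal S(u)^e\cdot\nabla_\normal w^e$, and the term $S(u)^3w$. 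This tangential integration by parts, together with the bookkeeping of the jump conditions, is the step I expect to be the \emph{main obstacle}, and it is carried out exactly as for the transmission problem treated in \cite{art:abfkl}.

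Finally, the perimeter contribution $\int_\Omega(\mathrm{div}\,V-DV\normal\cdot\normal)\,d|Du|$ appearing in \eqref{eq:matdersharp} is the first variation of the perimeter of $\omega$; for a set of class $C^2$ this equals $\int_{\partial\omega}h\,V\cdot\normal$ with $h$ the mean curvature of $\partial\omega$, as established in Section 9.4.3 of \cite{book:DZ2011}. Adding this to the localized misfit part reproduces \eqref{eq:shapeder}, and the vanishing of $DJ(u)[V]$ recorded in \eqref{eq:OCmatder} then yields the stated optimality condition.
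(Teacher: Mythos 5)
Your proposal is correct and follows essentially the same route the paper takes: the paper gives no detailed proof but explicitly asserts the result ``by means of an analogous argument as in \cite{art:abfkl}, except for the derivative of the perimeter penalization, which can be found in Section 9.4.3 in \cite{book:DZ2011}'' --- precisely the two ingredients you invoke. Your added outline (eliminating $\dot S(u)[V]$ via the adjoint identity obtained by testing \eqref{eq:adjoint} with $\psi=\dot S(u)[V]$ and \eqref{eq:matder} with $v=w$, then piecewise integration by parts under $C^2$ transmission regularity with the flux conditions $k\,\nabla_\normal S(u)^i=\nabla_\normal S(u)^e$, $k\,\nabla_\normal w^i=\nabla_\normal w^e$) is a faithful and correct expansion of that cited argument.
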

For the sake of completeness, we point out that the latter result can be easily generalized to the case in which $\omega$ is the union of $N_c$ disjoint, well separated, components, each of them satisfying the expressed hypotheses. Thanks to the results recently obtained in \cite{art:bfv}, we expect formula \eqref{eq:shapeder} to be valid also under milder assumption, in particular for polygons.
%
\par
We aim at demonstrating that the expression of the shape derivative reported in \eqref{eq:OCmatder} is the limit, as $\varepsilon \rightarrow 0$, of a suitable derivative of the relaxed cost functional $\Je$. In order to accomplish this result, we need to introduce necessary optimality conditions for the relaxed problem $\eqref{eq:minrel}$ which are different from the ones reported in Proposition \ref{prop:optcond} and can be derived by the same technique as in Proposition \ref{prop:sharpOC} as shown in the following result.
\begin{proposition}
If $u_\varepsilon$ is a solution of \eqref{eq:minrel}, then
\begin{equation}
DJ_\varepsilon(u_\varepsilon)[V] = 0 \qquad \textit{ for all the smooth vector fields $V$},
\label{eq:pfOC}
\end{equation}
The expression of the derivative is given by:
\begin{equation}
\begin{aligned}
DJ_\varepsilon(u_\varepsilon)[V] = &\int_{\partial \Omega}(S(u_\varepsilon) - y_{meas})\dot{S}(u_\varepsilon)[V] + \alpha \varepsilon \int_\Omega |\nabla \ue|^2 div V \\ &- 2\alpha \varepsilon \int_\Omega DV \nabla \ue \cdot \nabla \ue + \frac{\alpha}{\varepsilon} \int_{\Omega} \ue(1-\ue)div V
\end{aligned}
\label{eq:shapederpf}
\end{equation}
where $\dot{S}(u_\varepsilon)[V]$ solves the same problem as in \eqref{eq:matder}, replacing $u$ with $u_\varepsilon$.
\label{prop:pfOC}
\end{proposition}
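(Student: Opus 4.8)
The plan is to follow the same deformation technique as in Proposition \ref{prop:sharpOC}, now applied to the relaxed functional $\Je$ of \eqref{eq:minrel} in place of $J$. Recall that $V$ vanishes near $\partial\Omega$ (see \eqref{eq:V}), so that $\phi_t$ fixes $\partial\Omega$ and maps $\Omega$ onto itself; in particular $\phi_t$ is a diffeomorphism of $\Omega$ for all $t$ in a neighbourhood of $0$, and the competitor $u_t = \ue \circ \phi_t^{-1}$ belongs to $\mathcal{K}=H^1(\Omega;[0,1])$, since composition with a $C^1$ diffeomorphism preserves both the $H^1$ regularity and the pointwise bounds $0\le u_t\le 1$. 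Because $\ue$ is a minimizer of $\Je$ over $\mathcal{K}$ and both $u_t$ and $u_{-t}$ are admissible, the scalar map $t\mapsto\Je(u_t)$ attains an interior minimum at $t=0$; hence, once differentiability is established, $DJ_\varepsilon(\ue)[V]=\frac{d}{dt}\Je(u_t)\big|_{t=0}=0$. This two-sided perturbation is exactly what turns the variational inequality of Proposition \ref{prop:optcond} into the equality \eqref{eq:pfOC}.

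The second step is to differentiate the mismatch term $\half\norm{L^2(\partial\Omega)}{S(u_t)-y_{meas}}^2$. This computation is identical to the one carried out in Proposition \ref{prop:sharpOC}: I first set $S_0=S(\ue)$ and $S_t=S(u_t)\circ\phi_t$, pull the weak formulation back to the fixed domain $\Omega$, and show that the difference quotients $w_t=(S_t-S_0)/t$ are bounded in $H^1(\Omega)$ and converge strongly to the material derivative $\dot S(\ue)[V]$ solving \eqref{eq:matder} with $u$ replaced by $\ue$. The only ingredients used there — ellipticity $a(u)\ge k$, the lower bound on $b(\ue)S_0^2$ from Proposition \ref{prop:geq}, and the Poincaré inequality of Lemma \ref{Lemma1} — hold verbatim for $\ue\in\mathcal{K}$. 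Differentiating the boundary integral then reproduces \eqref{eq:7}: the term carrying $\tfrac{|\det D\phi_t|-1}{t}$ vanishes because $V=0$ on $\partial\Omega$, leaving precisely $\int_{\partial\Omega}(S(\ue)-y_{meas})\dot S(\ue)[V]$.

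The genuinely new part is the shape derivative of the Ginzburg–Landau energy. Changing variables $x=\phi_t(\xi)$ and using the gradient transformation rule $\nabla_x u_t = D\phi_t^{-T}\nabla\ue$, one obtains on the fixed domain $\int_\Omega|\nabla u_t|^2 = \int_\Omega A(t)\nabla\ue\cdot\nabla\ue$ and $\int_\Omega u_t(1-u_t)=\int_\Omega \ue(1-\ue)\,|\det D\phi_t|$, with $A(t)=D\phi_t^{-T}D\phi_t^{-1}|\det D\phi_t|$ exactly as in Proposition \ref{prop:sharpOC}. Differentiating at $t=0$ and invoking the derivatives already recorded there, namely that $A(t)$ has derivative $\mathcal{A}=(\diverg V)I-(DV+DV^T)$ and that $|\det D\phi_t|$ has derivative $\diverg V$, produces $\int_\Omega\mathcal{A}\nabla\ue\cdot\nabla\ue$ and $\int_\Omega\ue(1-\ue)\diverg V$ respectively. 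The elementary identity $DV^T\nabla\ue\cdot\nabla\ue=DV\nabla\ue\cdot\nabla\ue$ then simplifies $\mathcal{A}\nabla\ue\cdot\nabla\ue=|\nabla\ue|^2\diverg V-2\,DV\nabla\ue\cdot\nabla\ue$, and weighting by $\alpha\varepsilon$ and $\tfrac{\alpha}{\varepsilon}$ yields the three regularization terms of \eqref{eq:shapederpf}. Collecting the two contributions gives the announced expression.

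As for the main obstacle: no single step is genuinely hard, and the statement is essentially an adaptation of Proposition \ref{prop:sharpOC}. The most delicate point is the one inherited from that proposition, namely the strong $H^1$ convergence $w_t\to\dot S(\ue)[V]$, which is what legitimizes the chain rule producing the boundary term; this is handled exactly as before, by passing to the weak limit in the pulled-back equation and then upgrading weak to strong convergence through the associated energy identity. The Ginzburg–Landau computation itself is routine bookkeeping with the change of variables, and the passage from inequality to equality is the only conceptual novelty, secured by the invertibility of $\phi_t$ for both signs of $t$.
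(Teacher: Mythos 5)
Your proposal is correct and follows essentially the same route as the paper: adapt the material-derivative machinery of Proposition \ref{prop:sharpOC} (pull-back via $\phi_t$, uniform $H^1$ bounds on the difference quotients using $a(u)\geq k$, Proposition \ref{prop:geq} and Lemma \ref{Lemma1}, then weak-to-strong upgrade) for the mismatch term, and compute the Ginzburg--Landau contribution by the change of variables giving $A(t)$ and $|\det D\phi_t|$, simplified via $DV^{T}\nabla\ue\cdot\nabla\ue = DV\nabla\ue\cdot\nabla\ue$. Your explicit justification that $u_t=\ue\circ\phi_t^{-1}\in\mathcal{K}$ and that the two-sided admissibility of $u_{\pm t}$ converts the variational inequality into the equality \eqref{eq:pfOC} is only sketched in the paper (in the remark following the proposition), so your write-up is, if anything, slightly more complete on that point.
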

\begin{proof}
The same strategy as in the proof of Proposition \ref{prop:sharpOC} can be adapted to compute $\dot{S}(\ue)[V]$ and the derivative of the first term of the cost functional. We now derive with the same computational rules the relaxed penalization term. Recall 
\[
	F_\varepsilon(\ue) = \alpha \varepsilon \int_{\Omega}|\nabla \ue|^2 + \frac{\alpha}{\varepsilon} \int_\Omega \psi(\ue),
\]
being $\psi: \R \rightarrow \R,$ $\psi(x) = x(1-x)$. After the deformation from $u_\varepsilon$ to $u_\varepsilon \circ \phi_t^{-1}$ and applying the change of variables induced by $\phi_t$,
\[
	F_\varepsilon(\ue \circ \phi_t^{-1}) = \alpha \varepsilon \int_{\Omega} A(t) \nabla \ue \cdot \nabla \ue + \frac{\alpha}{\varepsilon} \int_{\Omega} \psi \circ \ue \circ \phi_t^{-1}. 
\] 
Hence,
\[
\begin{aligned}
\dot{F}_\varepsilon(u_\varepsilon)[V] &= \lim_{t \rightarrow 0}\frac{F_\varepsilon(\ue \circ \phi_t^{-1}) - F_\varepsilon(\ue)}{t} = \alpha \varepsilon \int_{\Omega} \mathcal{A}\nabla \ue \cdot \nabla \ue + \alpha \varepsilon \frac{\alpha}{\varepsilon}\int_{\Omega} \psi(\ue)div V =  \\
&= \alpha \varepsilon\int_\Omega |\nabla \ue|^2 div V - \alpha \varepsilon \int_\Omega (DV + DV^T)\nabla \ue \cdot \nabla \ue + \frac{\alpha}{\varepsilon}\int_\Omega \ue(1-\ue)div V,
\end{aligned}
\] 
which is the same expression as in \eqref{eq:shapederpf}, since $DV^T \nabla v \cdot \nabla v = DV \nabla v \cdot \nabla v$.
\end{proof}

We point out that the optimality conditions deduced in the latter proposition are not equivalent to the ones expressed in Proposition \ref{prop:optcond} via the Fréchet derivative of $\Je$. Nevertheless, if $u_\varepsilon$ satisfies \eqref{eq:OC}-\eqref{eq:VI}, then it also satisfies \eqref{eq:pfOC} (it is sufficient to consider in \eqref{eq:OC} $v = \ue \circ \phi_t^{-1}$, which belongs to $\mathcal{K}$ thanks to the regularity of $V$), whereas the contrary is not valid in general. In particular, due to the regularity of the perturbation fields $V$, the optimality conditions \eqref{eq:pfOC} do not take into account possible topological changes of the inclusion: for example, the number of connected components of $\omega$ cannot change. We remark that this holds also for the optimality conditions \eqref{eq:OCmatder} for the sharp problem, and consists in a limitation for the effectiveness of the reconstruction via a shape derivative approach: the initial guess of the reconstruction algorithm and the exact inclusion must be diffeomorphic.
\par
We are now able to show the sharp interface limit of the expression of the shape derivative of the relaxed cost functional $\Je$ as $\varepsilon \rightarrow 0$, which is done in the following proposition.
\begin{proposition}
Consider a family ${\bar{u}_\varepsilon}$ s.t. $\bar{u}_\varepsilon \in \mathcal{K}$ $\forall \varepsilon > 0$ and $\bar{u}_\varepsilon \xrightarrow{L^1} \bar{u} \in BV(\Omega)$  as $\varepsilon \rightarrow 0$. Then, 
\[
DJ_\varepsilon(\bar{u}_\varepsilon)[V] \rightarrow DJ(\bar{u})[V] \qquad \textit{for every smooth vector field $V$.}
\]
\label{prop:SIL} 
\end{proposition}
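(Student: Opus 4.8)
The plan is to split $DJ_\varepsilon(\bar u_\varepsilon)[V]$ into its boundary-fidelity contribution and its relaxed-perimeter contribution and pass to the limit in each separately, matching them to the two terms of \eqref{eq:matdersharp}. Writing $\psi(s)=s(1-s)$, the derivative \eqref{eq:shapederpf} can be regrouped as
\[
\begin{aligned}
DJ_\varepsilon(\bar u_\varepsilon)[V]=&\int_{\partial\Omega}(S(\bar u_\varepsilon)-y_{meas})\dot S(\bar u_\varepsilon)[V]\\
&+\alpha\int_\Omega\Big(\varepsilon|\nabla\bar u_\varepsilon|^2+\tfrac1\varepsilon\psi(\bar u_\varepsilon)\Big)\diverg V -2\alpha\varepsilon\int_\Omega DV\nabla\bar u_\varepsilon\cdot\nabla\bar u_\varepsilon,
\end{aligned}
\]
and the target is $\int_{\partial\Omega}(S(\bar u)-y_{meas})\dot S(\bar u)[V]+\alpha\int_\Omega(\diverg V-DV\nu\cdot\nu)\,d|D\bar u|$.

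For the fidelity term I would first invoke Proposition \ref{prop:continuity}, whose $L^1$-continuity argument only uses $0\le\bar u_\varepsilon,\bar u\le 1$ and thus applies to $\mathcal{K}$-valued sequences: since $\bar u_\varepsilon\xrightarrow{L^1}\bar u$, we get $S(\bar u_\varepsilon)\to S(\bar u)$ in $H^1(\Omega)$, hence in $L^2(\partial\Omega)$. It then remains to show $\dot S(\bar u_\varepsilon)[V]\to\dot S(\bar u)[V]$, for which I would reproduce the energy estimate of Propositions \ref{prop:continuity} and \ref{prop:optcond}. Passing (up to a subsequence) to the a.e.\ limit $\bar u_\varepsilon\to\bar u$, the coefficients $a(\bar u_\varepsilon),b(\bar u_\varepsilon)$ converge a.e.\ and stay bounded, while the data in \eqref{eq:matder} converge strongly, using $S(\bar u_\varepsilon)\to S(\bar u)$ in $H^1\subset\subset L^6$ so that $S(\bar u_\varepsilon)^2\to S(\bar u)^2$ in $L^3$ and $S(\bar u_\varepsilon)^3\to S(\bar u)^3$ in $L^2$. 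Subtracting the two weak formulations of \eqref{eq:matder}, testing the difference $z_\varepsilon=\dot S(\bar u_\varepsilon)[V]-\dot S(\bar u)[V]$ against itself, and exploiting the coercivity from $a\ge k$, Proposition \ref{prop:geq} and the Poincaré inequality of Lemma \ref{Lemma1}, every coefficient- and data-mismatch term vanishes by dominated convergence, yielding $z_\varepsilon\to0$ in $H^1(\Omega)$ and hence on the trace in $L^2(\partial\Omega)$. Convergence of the product of two $L^2(\partial\Omega)$-convergent factors then delivers the fidelity limit.

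The regularization term is the sharp-interface limit of the first variation of the Ginzburg--Landau energy, and here I would follow the strategy of \cite{art:garcke2016}. The decisive ingredients are the convergence of the diffuse energy measures together with the equipartition of energy: under the energy bound carried by the sequence (for minimizers/critical points this is forced by the $\Gamma$-convergence result, which gives energy convergence), one has, weakly-$*$ as Radon measures,
\[
\begin{aligned}
&\varepsilon|\nabla\bar u_\varepsilon|^2\,dx \ \overset{\ast}{\rightharpoonup}\ \tfrac{c_0}{2}|D\bar u|, \qquad \tfrac1\varepsilon\psi(\bar u_\varepsilon)\,dx \ \overset{\ast}{\rightharpoonup}\ \tfrac{c_0}{2}|D\bar u|,\\
&\varepsilon\,\nabla\bar u_\varepsilon\otimes\nabla\bar u_\varepsilon\,dx \ \overset{\ast}{\rightharpoonup}\ \tfrac{c_0}{2}\,\nu\otimes\nu\,|D\bar u|,
\end{aligned}
\]
with $c_0$ the same normalization constant as in the $\Gamma$-limit (so the coefficient is consistent with the $TV$-term of \eqref{eq:matdersharp}) and $\nu$ the measure-theoretic normal of $\partial^*\{\bar u=1\}$. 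Since $\diverg V$ and the entries of $DV$ are continuous and $DV\nabla\bar u_\varepsilon\cdot\nabla\bar u_\varepsilon=(\nabla\bar u_\varepsilon\otimes\nabla\bar u_\varepsilon):DV$, feeding these limits into the two regularization integrals gives
\[
\alpha c_0\int_\Omega \diverg V\,d|D\bar u| - 2\alpha\cdot\tfrac{c_0}{2}\int_\Omega DV\nu\cdot\nu\,d|D\bar u| = \alpha c_0\int_\Omega(\diverg V - DV\nu\cdot\nu)\,d|D\bar u|,
\]
which is exactly the perimeter contribution of \eqref{eq:matdersharp}.

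The hard part will be the regularization term, and specifically the convergence of the tensor measure $\varepsilon\,\nabla\bar u_\varepsilon\otimes\nabla\bar u_\varepsilon$ to $\tfrac{c_0}{2}\nu\otimes\nu\,|D\bar u|$: unlike the scalar energy convergence, this requires controlling the \emph{direction} $\nabla\bar u_\varepsilon/|\nabla\bar u_\varepsilon|$ and showing it aligns asymptotically with the interface normal. This is precisely the Modica--Mortola / Reshetnyak-type content borrowed from \cite{art:garcke2016}, and it is where the mere $L^1$ convergence of $\bar u_\varepsilon$ must be supplemented by the asymptotic energy optimality (equipartition) of the sequence; the fidelity term, by contrast, needs only the $L^1$ convergence and the continuity results already established.
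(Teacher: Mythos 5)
Your proof follows essentially the same route as the paper's, with one layer unpacked. The fidelity term is handled identically: the paper invokes Proposition \ref{prop:continuity} to get $S(\bar{u}_\varepsilon)\xrightarrow{H^1} S(\bar{u})$, and then obtains $\dot{S}(\bar{u}_\varepsilon)[V]\xrightarrow{H^1}\dot{S}(\bar{u})[V]$ by subtracting the two instances of \eqref{eq:matder} and showing the difference is controlled by $\norm{H^1(\Omega)}{S(\bar{u}_\varepsilon)-S(\bar{u})}$ --- exactly your energy estimate with coercivity from $a\geq k$, Proposition \ref{prop:geq} and Lemma \ref{Lemma1}. For the regularization term, however, the paper gives no argument at all: it cites Theorem 4.2 of \cite{art:garcke2008} (with the annotations in the proof of Theorem 21 of \cite{art:garcke2016}), and that cited theorem is precisely the equipartition-of-energy and tensor-measure convergence $\varepsilon\,\nabla\bar{u}_\varepsilon\otimes\nabla\bar{u}_\varepsilon\,dx \overset{\ast}{\rightharpoonup} \tfrac{c_0}{2}\,\normal\otimes\normal\,|D\bar{u}|$ that you sketch, so your proposal amounts to proving the black-boxed lemma rather than citing it. Your closing caveat is the substantive point of divergence, and it is well taken: bare $L^1$ convergence of $\bar{u}_\varepsilon$ does \emph{not} yield these measure convergences, and the cited theorem carries additional hypotheses (the sequence must be asymptotically energy-optimal, e.g.\ critical points with convergent Ginzburg--Landau energies). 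The paper's statement of Proposition \ref{prop:SIL} silently inherits this hypothesis through the citation; in its intended use (along minimizers, via Proposition \ref{prop:conv2}, where $\Gamma$-convergence forces energy convergence) the hypothesis is satisfied, but as literally stated the proposition --- and hence any proof, yours or the paper's --- needs this supplement, so you were right to flag it rather than claim the measure convergence from $L^1$ convergence alone. A final minor remark in your favor: your bookkeeping of the normalization constant $c_0$ and of the factor $\alpha$ is more careful than the paper's, whose formula \eqref{eq:matdersharp} and the displayed limit in its proof drop both constants from the perimeter contribution.
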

\begin{proof}
We follow a similar argument as in the proof of \cite[Theorem 21]{art:garcke2016}. Thanks to Proposition \ref{prop:continuity}, $\bar{\ue} \xrightarrow{L^1} \bar{u}$ $\Rightarrow$ $S(\bar{\ue}) \xrightarrow{H^1} S(\bar{u})$. Also $\dot{S}(\bar{\ue})[V] \xrightarrow{H^1} \dot{S}(\bar{u})[V]$: the proof is done by subtracting the equations of which $\dot{S}(\bar{\ue})[V]$ and $\dot{S}(\bar{u})[V]$ and verifying that the norm of their difference is controlled by the norm of $S(\bar{\ue})-S(\bar{u})$ in $H^1(\Omega)$. Thanks to these results, surely 
\[
\int_\Omega (S(\ue)-y_{meas})\dot{S}(\bar{\ue})[V] \rightarrow \int_\Omega (S(u) -y_{meas})\dot{S}(\bar{u})[V].
\]
Eventually, the convergence
\[
\alpha \varepsilon \int_\Omega |\nabla \bar{\ue}|^2 div V - 2\alpha \varepsilon \int_\Omega DV \nabla \bar{\ue} \cdot \nabla \bar{\ue} + \frac{\alpha}{\varepsilon} \int_{\Omega} \bar{\ue}(1-\bar{\ue})div V \rightarrow \int_\Omega(div V - DV \normal \cdot \normal)d|D\bar{u}|
\]
is proved in \cite{art:garcke2008}, Theorem 4.2 (see also annotations in \cite{art:garcke2016}, proof of Theorem 21).
\end{proof}
In particular, we point out that this implies, together with Proposition \ref{prop:conv2}, that the expression of the optimality condition for the phase field problem converges, as $\varepsilon \rightarrow 0$, to the one in the sharp case.

\subsection{Comparison with the shape derivative algorithm}
\label{shapeder}
In this section, we report some results of the application of the algorithm based on the shape derivative. In the implementation, we take advantage of the Finite Element method to solve the direct and adjoint problems and compute the shape gradient as in \eqref{eq:shapeder}. We consider an initial guess for the inclusion (in all the simulations reported, the initial guess is a disc centered in the origin with radius $0.02$) and discretize its boundary with a finite number of points, which always coincide with vertices of the numerical mesh. We iteratively perturb the inclusion by moving the boundary with a vector field $V$ which is the projection in the finite element space $V_h \times V_h$ of the shape gradient reported in \eqref{eq:shapeder} and $\normal_\omega$ the external normal vector of $\partial \omega$ (see e.g. \cite{art:marco} for more details). After the descent direction is determined, a backtracking scheme is implemented (see \cite{book:NocedalWright}), in order to guarantee the decrease of the cost functional $J$ at each iteration. As in the case of Algorithm \ref{al:POP}, we start from the initial guess $u^0 \equiv 0$ and take advantage of $N_f = 2$ measurements, associated to the same source terms. The main parameters of this set of simulations are reported in Table \ref{tab:shapeder}.
\begin{table}[h!]
\centering
\begin{tabular}{c|c|c}
$\alpha$ & $max step$ & $tol$ \\
\hline
$10^{-3}$ & $10$ & $10^{-6}$ 
\end{tabular}
\caption{Values of the main parameters}
\label{tab:shapeder}
\end{table}
\par  
In Figure \ref{fig:uzawa} we report the results of the reconstruction with the shape gradient algorithm compared to the ones of the Parabolic Obstacle problem (with $\varepsilon = \frac{1}{16\pi}$ and with mesh adaptation). Each result is endowed with a plot of the evolution of the cost functional throughout time (in particular, of $J_{PDE}(u) = \half \norm{L^2(\partial \Omega)}{S(u) - y_{meas}}$
\begin{figure}[h!]
			\centering
			\subfloat[Shape gradient, evolution of the cost functional]{
		    	\includegraphics[width=0.33\textwidth]{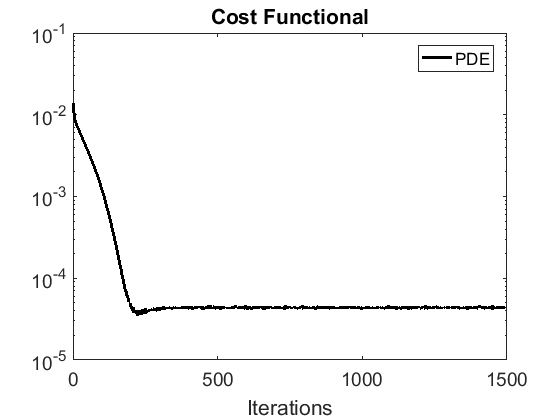}
			}
			\subfloat[Shape gradient, $N_{tot} = 1494$]{
		    	\includegraphics[width=0.33\textwidth]{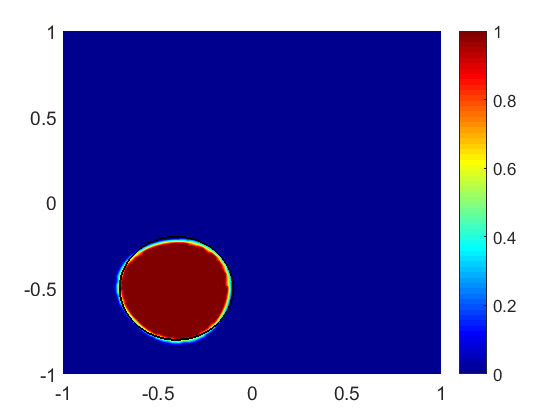}
			}
			\subfloat[Phase field, $\varepsilon = \frac{1}{16\pi}$, $N_{tot} = 1869$]{
		    	\includegraphics[width=0.33\textwidth]{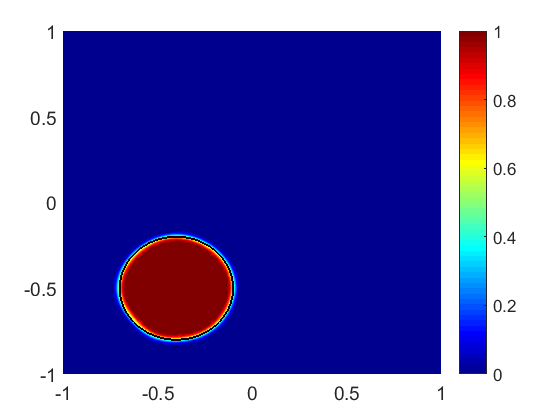}
			}
			\\
			\subfloat[Shape gradient, evolution of the cost functional]{
		    	\includegraphics[width=0.33\textwidth]{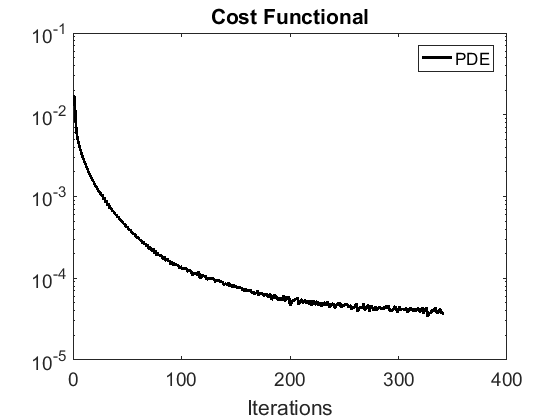}
			}
			\subfloat[Shape gradient, $N_{tot} = 301$]{
		    	\includegraphics[width=0.33\textwidth]{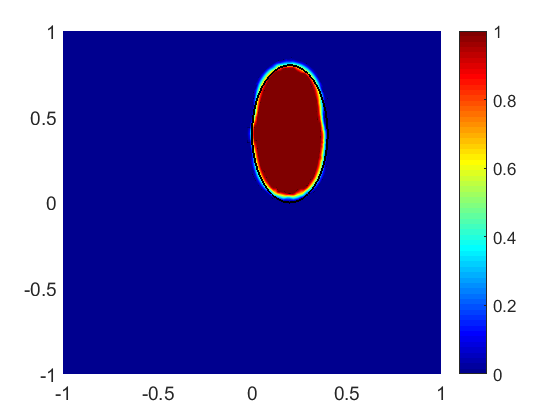}
			}
			\subfloat[Phase field, $\varepsilon = \frac{1}{16\pi}$, $N_{tot} = 1503$]{
		    	\includegraphics[width=0.33\textwidth]{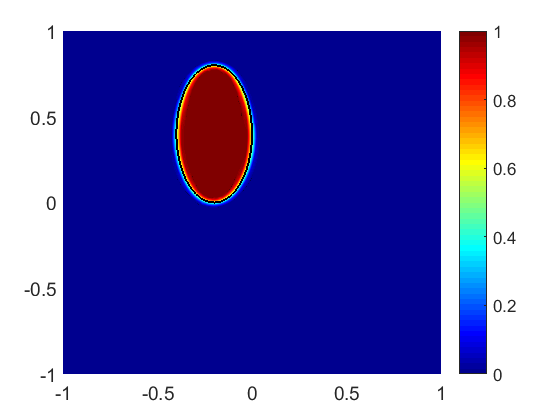}
			}
	\caption{Shape gradient algorithm: result comparison}
	\label{fig:uzawa}
\end{figure}
The reconstruction achieved by the shape gradient algorithm seems to be as accurate as the phase-field one. The sharp method seems to be less expensive in term of computational cost, and it involves a smaller number of iterations. Nevertheless, it requires the knowledge of remarkable \textit{a priori} information, i.e. the topology of the inclusion.

\section*{Appendix}
In the proof of various proposition we have used the following generalized Poincarè inequality:
\begin{lemma}
	Let $\Omega^* \subset \Omega$ be s.t. $|\Omega^*| \neq 0$. Then $\exists C > 0, C = C(\Omega)$ s.t., $\forall u \in H^1(\Omega)$,
	\begin{equation}
		\norm{H^1(\Omega)}{u}^2 \leq C \left( \norm{L^2(\Omega)}{\nabla u}^2 + \norm{L^2(\Omega^*)}{u}^2 \right).
	\label{eq:Poincare}
	\end{equation}
	\label{Lemma1}
\end{lemma}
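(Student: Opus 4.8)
The plan is to establish \eqref{eq:Poincare} by a standard compactness-and-contradiction argument built on the Rellich--Kondrachov theorem; throughout I assume, as is implicit elsewhere in the paper, that $\Omega$ is a bounded, connected domain with Lipschitz boundary, so that the compact embedding $H^1(\Omega) \subset\subset L^2(\Omega)$ is available. First I would reduce the claim to the single inequality
\[
\norm{L^2(\Omega)}{u}^2 \leq C_0 \left( \norm{L^2(\Omega)}{\nabla u}^2 + \norm{L^2(\Omega^*)}{u}^2 \right),
\]
since, adding $\norm{L^2(\Omega)}{\nabla u}^2$ to both sides and using $\norm{H^1(\Omega)}{u}^2 = \norm{L^2(\Omega)}{u}^2 + \norm{L^2(\Omega)}{\nabla u}^2$, one immediately recovers \eqref{eq:Poincare} with $C = C_0 + 1$.

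Next I would argue by contradiction. If no such $C_0$ exists, there is a sequence $\{u_n\} \subset H^1(\Omega)$ with $\norm{L^2(\Omega)}{u_n}^2 > n\big(\norm{L^2(\Omega)}{\nabla u_n}^2 + \norm{L^2(\Omega^*)}{u_n}^2\big)$. Normalizing $v_n = u_n / \norm{L^2(\Omega)}{u_n}$ yields $\norm{L^2(\Omega)}{v_n} = 1$ while $\norm{L^2(\Omega)}{\nabla v_n}^2 + \norm{L^2(\Omega^*)}{v_n}^2 < 1/n \to 0$. In particular $\{v_n\}$ is bounded in $H^1(\Omega)$, so by Rellich--Kondrachov a subsequence (not relabelled) converges strongly in $L^2(\Omega)$ and weakly in $H^1(\Omega)$ to some limit $v$.

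The key step, and the one where the hypotheses on $\Omega$ and on $\Omega^*$ are used, is the identification of this limit. By weak lower semicontinuity of the gradient norm, $\norm{L^2(\Omega)}{\nabla v} \leq \liminf_n \norm{L^2(\Omega)}{\nabla v_n} = 0$, so $\nabla v = 0$ and, since $\Omega$ is connected, $v$ is a.e. equal to a constant. Strong $L^2(\Omega)$ convergence forces $\norm{L^2(\Omega)}{v} = 1$, so this constant is nonzero; on the other hand the restrictions converge in $L^2(\Omega^*)$ as well, whence $\norm{L^2(\Omega^*)}{v} = \lim_n \norm{L^2(\Omega^*)}{v_n} = 0$. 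Because $|\Omega^*| \neq 0$, a nonzero constant cannot have vanishing $L^2(\Omega^*)$ norm, a contradiction, which rules out the failure of the inequality and proves the lemma.

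I expect the only genuine subtlety to be precisely this limit identification: one must ensure that the domain regularity guarantees the compact embedding, and that connectedness of $\Omega$ together with $|\Omega^*| > 0$ excludes a nontrivial constant limit. Everything else is bookkeeping; in particular, the reduction step and the normalization are routine, and no quantitative control of $C$ is required since the argument is purely qualitative.
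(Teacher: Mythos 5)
Your proof is correct and is essentially the argument the paper itself defers to rather than spelling out: the paper proves nothing inline, instead citing the appendix of \cite{art:bcmp} and Theorem 8.11 in \cite{liebloss}, which deliver exactly your normalization-plus-Rellich--Kondrachov contradiction argument (bounded sequence, weak $H^1$/strong $L^2$ limit, zero gradient forces a constant, $|\Omega^*|\neq 0$ kills it). One small point: the constant your argument produces depends on $\Omega^*$ as well as on $\Omega$ (uniformity in $\Omega^*$ is in fact false, since $C$ must blow up as $|\Omega^*|\to 0$), so the paper's notation $C=C(\Omega)$ should be read with $\Omega^*$ fixed in advance, which is how the lemma is actually invoked.
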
 
The proof of the Lemma is given in the Appendix of \cite{art:bcmp} and easily follows by Theorem 8.11 in \cite{liebloss}.
\par
Thanks to Lemma \ref{Lemma1}, we can prove the following well-posedness result for the direct problem. We remark that a similar analysis was performed in \cite{art:bcmp}, but here we extend the result for the case of inclusions which have the property of being finite-perimeter sets.
\begin{proposition}
	Consider $f \in \left(H^1(\Omega)\right)^*$ and a function $u \in BV(\Omega;[0,1])$ s.t. $u$ is not (a.e.) equal to $1$. Then there exists an unique solution $S(u)\in H^1(\Omega)$ of
	\[
	\int_\Omega a(u) \nabla S(u) \cdot \nabla v + \int_\Omega b(u)S(u)^3v = \int_\Omega fv \qquad \forall v \in H^1(\Omega),
	\]
	where $a(u) = 1-(1-k)u$ and $b(u) = 1-u$.
\label{prop:wellpos}
\end{proposition}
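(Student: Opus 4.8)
The plan is to recast the weak formulation as a single operator equation and solve it by the theory of monotone operators (the Browder--Minty theorem). I would fix $u \in BV(\Omega;[0,1])$ with $u \not\equiv 1$ and define the nonlinear operator $T : H^1(\Omega) \to (H^1(\Omega))^*$ by
\[
\langle T(y), v \rangle = \int_\Omega a(u)\nabla y \cdot \nabla v + \int_\Omega b(u) y^3 v.
\]
This is well defined: in dimension two the compact embedding $H^1(\Omega) \hookrightarrow L^6(\Omega)$ gives $y^3 \in L^2(\Omega) \hookrightarrow (H^1(\Omega))^*$ for $y \in H^1(\Omega)$, and the principal part is the usual bounded pairing. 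The goal is then to verify that $T$ is strictly monotone, coercive and continuous, which produces a unique $S(u)=y$ solving $T(y)=f$.

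Two structural facts drive everything. First, since $0 \le u \le 1$ we have $a(u) = 1-(1-k)u \in [k,1]$, so the diffusion part is uniformly elliptic with constant $k$. Second, $b(u)=1-u \ge 0$, and here the assumption $u \not\equiv 1$ is essential: the set $\{u<1\}$ has positive measure, so I may fix $\delta>0$ and $\Omega^* \subset \Omega$ with $|\Omega^*|\neq 0$ and $b(u)\ge \delta$ on $\Omega^*$. This $\Omega^*$ is precisely the set feeding the generalized Poincaré inequality of Lemma \ref{Lemma1}. For monotonicity I would use the factorization $y_1^3-y_2^3=(y_1-y_2)(y_1^2+y_1y_2+y_2^2)$ together with $y_1^2+y_1y_2+y_2^2 = 3\bigl(y_2+\tfrac12(y_1-y_2)\bigr)^2+\tfrac14(y_1-y_2)^2 \ge 0$, obtaining
\[
\begin{aligned}
\langle T(y_1)-T(y_2), y_1-y_2\rangle &= \int_\Omega a(u)|\nabla(y_1-y_2)|^2 \\
&\quad + \int_\Omega b(u)(y_1-y_2)^2(y_1^2+y_1y_2+y_2^2) \ge k\norm{L^2(\Omega)}{\nabla(y_1-y_2)}^2 \ge 0.
\end{aligned}
\]
Strictness (hence uniqueness) follows because equality forces $y_1-y_2\equiv c$ constant, whence the quadratic factor is bounded below by $c^2/4$ and the second integral is at least $\tfrac14\delta|\Omega^*|c^4$, which vanishes only for $c=0$. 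Continuity of $T$ is routine: if $y_n\to y$ in $H^1$ then $y_n\to y$ in $L^6$ and $\norm{L^2(\Omega)}{y_n^3-y^3}\le \norm{L^6(\Omega)}{y_n-y}\,\norm{L^3(\Omega)}{y_n^2+y_ny+y^2}\to 0$.

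The main obstacle is coercivity, precisely because $b(u)$ degenerates on $\{u=1\}$, so the quartic term cannot by itself control $\norm{L^2(\Omega)}{y}$ linearly. I would estimate
\[
\langle T(y),y\rangle \ge k\norm{L^2(\Omega)}{\nabla y}^2 + \delta\int_{\Omega^*} y^4,
\]
and apply Cauchy--Schwarz on $\Omega^*$, $\norm{L^2(\Omega^*)}{y}^2 \le |\Omega^*|^{1/2}\bigl(\int_{\Omega^*}y^4\bigr)^{1/2}$, so that $\int_{\Omega^*}y^4 \ge |\Omega^*|^{-1}\norm{L^2(\Omega^*)}{y}^4$. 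Writing $a=\norm{L^2(\Omega)}{\nabla y}^2$ and $b=\norm{L^2(\Omega^*)}{y}^2$, Lemma \ref{Lemma1} gives $\norm{H^1(\Omega)}{y}^2 \le C(a+b)$ while $\langle T(y),y\rangle \ge ka + \delta' b^2$; a short case analysis as $a+b\to\infty$ (if $a$ dominates the term $ka$ wins, if $b$ dominates the term $\delta'b^2$ wins) shows $\langle T(y),y\rangle/\norm{H^1(\Omega)}{y}\to +\infty$, i.e. $T$ is coercive. With strict monotonicity, coercivity and continuity established, the Browder--Minty theorem yields existence of a solution, and strict monotonicity yields uniqueness, completing the proof.
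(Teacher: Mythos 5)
Your proposal is correct and follows essentially the same route as the paper: both recast the weak formulation as $T(y)=f$ for the same operator $T$ and apply Browder--Minty, verifying continuity via the $H^1(\Omega)\hookrightarrow L^6(\Omega)$ embedding, coercivity through the quartic term on a set $\Omega^*$ where $b(u)\geq\delta$ combined with the generalized Poincar\'e inequality of Lemma \ref{Lemma1}, and strict monotonicity from the factorization $y_1^3-y_2^3=(y_1-y_2)(y_1^2+y_1y_2+y_2^2)$ with $y_1^2+y_1y_2+y_2^2\geq\tfrac14(y_1-y_2)^2$. The only cosmetic differences are that you handle coercivity by a case analysis where the paper completes a square to get a quadratic lower bound minus a constant, and your strictness argument concludes $y_1-y_2\equiv c$ from the vanishing gradient (using connectedness of $\Omega$, which the paper's reliance on Lemma \ref{Lemma1} implicitly assumes anyway) where the paper instead deduces $y_1=y_2$ a.e.\ on $\Omega^*$ and reapplies the Poincar\'e inequality.
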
  
\begin{proof}
We use the Minty-Browder theorem (see, e.g., Theorem 5.16 in \cite{book:brezis}), introducing (for a fixed $u$) the operator $T:H^1(\Omega) \rightarrow \left(H^1(\Omega)\right)^*$ s.t.
\[
	\langle T(S),v \rangle_* = \int_\Omega a(u)\nabla S \cdot \nabla v + \int_\Omega b(u) S^3 v.
\]
We can easily verify that the nonlinear operator $T$ is continuous, coercive and monotone.
\begin{itemize}
	\item Continuity: we indeed prove that $T$ is locally Lipschitz continuous with respect to $S$.
	\[
	\begin{aligned}
	|\langle T(S)-T(S_0), v \rangle_*| &= \left| \int_\Omega a(u) \nabla (S-S_0) \cdot \nabla v + \int_\Omega b(u)(S-S_0)q\right| \textit{ (being $q = S^2+S S_0 + S_0^2$)} \\
	& \leq \norm{L^2}{\nabla (S-S_0)}\norm{L^2}{\nabla v} + \norm{L^6}{S-S_0}\norm{L^3}{ q}\norm{L^2}{v}
	\end{aligned}
	\]
	If $S$ and $S_0$ belong to a bounded subset of $H^1(\Omega)$, then (thanks to the Sobolev Embedding of $H^1(\Omega)$ in $L^6(\Omega)$) we can assess that $\norm{L^3}{q}\leq M$ and moreover $\exists K>0$ s.t. 
	\[
	|\langle T(S)-T(S_0), v \rangle_*|\leq K \norm{H^1}{S-S_0} \norm{H^1}{v} \qquad \forall v \in H^1(\Omega).
	\]
	\item Coercivity: we show that $\langle T(S),S \rangle_* \rightarrow +\infty $ as $\norm{H^1(\Omega)}{S} \rightarrow + \infty$. Since $u$ is not identically equal to $1$, $\exists Q>0$ and $\Omega^*: |\Omega^*|\neq 0$ s.t. $b(u) = 1-u \geq Q$ a.e. in $\Omega^*$. Then,
	\[
	\begin{aligned}
	\langle T(S),S \rangle_* &\geq k \int_\Omega |\nabla S|^2 + Q\int_{\Omega^*}S^4 \geq k \norm{L^2(\Omega)}{\nabla S}^2 + \frac{Q}{|\Omega|}\norm{L^2(\Omega^*)}{S}^4 \\
	&=k \left( \norm{L^2(\Omega)}{\nabla S}^2 + \norm{L^2(\Omega^*)}{S}^2 \right) + R,	
	\end{aligned}
	\]
where $R = \frac{Q}{|\Omega|}\norm{L^2(\Omega^*)}{S}^4 - k \norm{L^2(\Omega^*)}{S}^2$ can be bounded by below independently of $S$ by considering that 
\[
\left( A\norm{L^2(\Omega^*)}{S}^2 - B \right)^2 \geq 0 \Rightarrow A^2 \norm{L^2(\Omega^*)}{S}^4 - 2AB \norm{L^2(\Omega^*)}{S}^2 \geq - B^2,
\]
which implies (chosen $A=\sqrt{\frac{Q}{|\Omega|}}$ and $B = \frac{k \sqrt{|\Omega|}}{2 \sqrt{Q}}$) that $R \geq -\frac{k^2 |\Omega|}{4Q}$. Together with the Poincarè inequality in Lemma \ref{Lemma1}, we conclude that
\[
\langle T(S),S \rangle_* \geq \frac{k}{C} \norm{H^1(\Omega)}{S}^2 - \frac{k^2 |\Omega|}{4Q}.
\]
\item (Strict) monotonicity: we claim that $\langle T(S) - T(R), S-R \rangle_* \geq 0$ and $\langle T(S) - T(R), S-R \rangle_* = 0 \Leftrightarrow S = R$. Indeed, 
\[
\langle T(S) - T(R), S-R \rangle_*  \geq \int_\Omega k |\nabla(S-R)|^2 + Q \int_{\Omega^*} (S^2+SR+R^2)(S-R)^2 \geq 0.
\] 
Moreover, since $S^2+SR+R^2 \geq \frac{1}{4}(S-R)^2$,
\[
\langle T(S) - T(R), S-R \rangle_* = 0 \Rightarrow \norm{L^2(\Omega)}{\nabla(S-R)}=0 \text{ and } \int_{\Omega^*} (S-R)^4 = 0,
\]
and from the latter equality it follows that $S=R$ a.e. in $\Omega^*$, hence also $\norm{L^2(\Omega^*)}{S-R}=0$, and thanks to the Poincaré inequality in Lemma \ref{Lemma1}, $\norm{H^1(\Omega)}{S-R}=0$.
\end{itemize}
\end{proof}
Finally, we prove an estimate which occurs many times in the proof of various results.
\begin{proposition}
Suppose that $f\in L^2(\Omega)$ s.t. $\int_\Omega f \neq 0$. Consider $S(u)$ the solution of problem \eqref{eq:prob} associated to $u \in BV(\Omega;[0,1])$, $u$ not identically equal to $1$. Then, there exists $\Omega^*$ and $Q>0$ s.t. $|\Omega^*|\neq 0$ and 
\[
b(u)S(u)^2 \geq Q \qquad \text{ a.e. in }\Omega^* 
\] 
\label{prop:geq}
\end{proposition}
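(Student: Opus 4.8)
The plan is to extract the desired pointwise lower bound from a single judicious choice of test function in the weak formulation, using the hypothesis $\int_\Omega f \neq 0$ in an essential way. The starting point is to test the equation \eqref{eq:prob} defining $S(u)$ with the constant function $v \equiv 1$, which is admissible since $1 \in H^1(\Omega)$ ($\Omega$ being bounded). The diffusion term $\int_\Omega a(u) \nabla S(u) \cdot \nabla 1$ vanishes because $\nabla 1 = 0$, leaving the identity
\[
\int_\Omega b(u) S(u)^3 = \int_\Omega f .
\]
This is well defined: $b(u) = 1-u$ is bounded and $S(u) \in H^1(\Omega) \hookrightarrow L^6(\Omega)$, so $b(u)S(u)^3 \in L^2(\Omega) \subset L^1(\Omega)$.

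Next I would exploit that, by assumption, the right-hand side is nonzero. Consequently the integrand $b(u)S(u)^3$ cannot vanish almost everywhere, so the set $\{x : b(u)(x) S(u)(x)^3 \neq 0\}$ has strictly positive Lebesgue measure. Here the sign structure is crucial: since $u \in BV(\Omega;[0,1])$ forces $b(u) = 1-u \geq 0$, one has $b(u)S(u)^3 \neq 0$ if and only if $b(u) > 0$ and $S(u) \neq 0$. But this is exactly the condition under which $h \defeq b(u)S(u)^2$ is strictly positive (note $h \geq 0$ everywhere). Hence the zero sets of $b(u)S(u)^3$ and of $h$ coincide, and we conclude $|\{h > 0\}| > 0$.

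Finally I would upgrade ``positive on a positive-measure set'' to ``bounded below by a constant on a positive-measure set'' by a routine level-set decomposition. Writing
\[
\{h > 0\} = \bigcup_{j \in \N} \Big\{ h \geq \tfrac{1}{j} \Big\}
\]
as a countable increasing union, positivity of the measure of the union forces $|\{h \geq 1/j\}| > 0$ for some $j$. Taking $\Omega^* = \{h \geq 1/j\}$ and $Q = 1/j$ then gives $b(u)S(u)^2 \geq Q$ a.e. in $\Omega^*$ with $|\Omega^*| \neq 0$, which is the claim.

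I do not expect a serious obstacle in this argument: the only points requiring a little care are the admissibility and integrability needed to test with $v \equiv 1$ (guaranteed by the Sobolev embedding $H^1(\Omega) \hookrightarrow L^6(\Omega)$ in two dimensions) and the observation that $b(u)$ has a fixed sign, so that the zero sets of $b(u)S(u)^3$ and of $b(u)S(u)^2$ agree. It is precisely the hypothesis $\int_\Omega f \neq 0$ that rules out the degenerate case $S(u) \equiv 0$ and thereby drives the whole proof; the standing assumption that $u$ is not a.e. equal to $1$ is needed only to ensure, via Proposition \ref{prop:wellpos}, that $S(u)$ is well defined in the first place.
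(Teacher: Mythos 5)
Your proof is correct and is essentially the paper's own argument run in the contrapositive direction: the paper assumes $b(u)S(u)^2 = 0$ a.e.\ in $\Omega$, drops the cubic term from the weak formulation, tests with a constant and contradicts $\int_\Omega f \neq 0$, whereas you test with $v \equiv 1$ directly to get $\int_\Omega b(u)S(u)^3 = \int_\Omega f \neq 0$ and then extract $\Omega^*$ and $Q$ via the level-set decomposition. The only difference is presentational: your version spells out the level-set step (and the sign observation that the zero sets of $b(u)S(u)^3$ and $b(u)S(u)^2$ coincide) that the paper leaves implicit when it identifies the negation of the conclusion with ``$b(u)S(u)^2 = 0$ a.e.''
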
 
\begin{proof}
By contraddiction, suppose the opposite of the thesis: $b(u)S(u)^2 = 0$ a.e. in $\Omega$. Then, this would imply that $\int_\Omega b(u) S(u)^3 = 0$, and then it would hold that
\[
\int_\Omega a(u)\nabla S(u)\cdot \nabla v = \int_\Omega f v \qquad \forall v \in H^1(\Omega).
\] 
Taking $v = const.$ we obtain that $\int_\Omega f = 0$, which contraddicts the hypothesis.
\end{proof}

We remark that the previous result can be extended to class of functions $f$ satisfying more general hypotheses. If, for example, we restrict to the case of inclusions well separated from the boundary ($u = 0$ a.e. in $\Omega^{d_0}$, being $\Omega^{d_0} = \{x \in \Omega: dist(x,\partial \Omega) \leq d_0\}$, $d_0 > 0$), then it is sufficient to require $f \neq 0$ a.e in $\Omega^{d_0}$ to guarantee an estimate equivalent to the one of Proposition \ref{prop:geq}.
\begin{proposition}
Suppose that $u \in BV(\Omega;[0,1])$ satisfies $u = 0$ a.e. in $\Omega^{d_0}$. If $f\in L^2(\Omega)$ does not vanish in $\Omega^{d_0}$then there exists $Q>0$ s.t. the solution $S(u)$ of \eqref{eq:prob} satisfies
\[
b(u)S(u)^2 \geq Q \qquad \text{ a.e. in }\Omega^{d_0} 
\] 
\label{prop:geq2}
\end{proposition}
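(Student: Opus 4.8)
The plan is to read the statement as the assertion that the state $S(u)$ stays uniformly bounded away from zero on the \emph{whole} fixed strip $\Omega^{d_0}$. Since $u=0$ a.e.\ in $\Omega^{d_0}$ we have $a(u)=b(u)=1$ there, so $b(u)S(u)^2=S(u)^2$ and the claim reduces to producing a constant $\delta>0$ with $|S(u)|\ge\delta$ a.e.\ in $\Omega^{d_0}$; then $Q=\delta^2$ works. Because \eqref{eq:prob} is odd under $(S,f)\mapsto(-S,-f)$, I may fix the sign and assume $f\ge 0$ a.e.\ in $\Omega$, the non-vanishing of $f$ on $\Omega^{d_0}$ then ensuring in addition that $f\not\equiv 0$. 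The argument splits into a global sign step and a quantitative positivity step.

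First I would show $S:=S(u)\ge 0$ a.e.\ in $\Omega$. Testing \eqref{eq:prob} with $\varphi=S^-:=\max(-S,0)\in H^1(\Omega)$ and using $\nabla S\cdot\nabla S^-=-|\nabla S^-|^2$ together with $S^3S^-=-(S^-)^4$ on $\{S<0\}$ gives
\[
\int_\Omega a(u)|\nabla S^-|^2+\int_\Omega b(u)(S^-)^4=-\int_\Omega f\,S^-\le 0,
\]
since $f\ge 0$ and $S^-\ge 0$. As $a(u)\ge k>0$, both non-negative terms on the left vanish, so $S^-$ is constant; were it a positive constant, $\int_\Omega b(u)(S^-)^4=0$ would force $b(u)=1-u=0$ a.e., contradicting $u=0$ on the positive-measure set $\Omega^{d_0}$. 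This is exactly the mechanism used in the coercivity part of Proposition \ref{prop:wellpos}. Hence $S^-=0$, i.e.\ $S\ge 0$.

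The core step is to upgrade $S\ge 0$ to a uniform lower bound. I would first record that $S\in L^\infty(\Omega)$: the absorbing sign of the reaction term $+\,b(u)S^3$ together with $f\in L^2(\Omega)$ yields an $L^\infty$ bound by a standard Stampacchia truncation. Consequently $c:=b(u)S^2\in L^\infty(\Omega)$ with $c\ge 0$, and $S$ is a non-negative supersolution of the uniformly elliptic divergence-form operator $-\diverg(a(u)\nabla\,\cdot\,)+c$, where $a(u)\ge k$. By De Giorgi--Nash $S$ is continuous on $\overline\Omega$, and since $S\ge 0$, $S\not\equiv 0$ and $\Omega$ is connected, the weak Harnack inequality propagates strict positivity: $S>0$ at every interior point. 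At $\partial\Omega$ the homogeneous Neumann condition $\partial_{\normal}S=0$ combined with Hopf's lemma excludes a boundary zero, so $S>0$ on the compact set $\overline\Omega$; being continuous and strictly positive there, $S$ attains a minimum $\delta>0$. This gives $S^2\ge\delta^2$, hence $b(u)S(u)^2\ge\delta^2=:Q$ a.e.\ in $\Omega^{d_0}$.

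The main obstacle is precisely this quantitative positivity, i.e.\ turning the qualitative ``solutions stay positive'' into a single $\delta>0$ valid up to the boundary. The clean route is the weak Harnack inequality for non-negative supersolutions with bounded measurable coefficients, applied on a finite cover of $\overline{\Omega^{d_0}}$ and complemented at $\partial\Omega$ by a Neumann (boundary) Harnack or a Hopf barrier; the corners of a polygonal $\Omega$ need a local barrier but form a null set that is absorbed by the continuity of $S$. A secondary point to make explicit is the reading of the hypothesis: the \emph{uniform} bound genuinely requires a sign for $f$, so that $S$ cannot change sign and dip to zero near the inner boundary of $\Omega^{d_0}$. This sign requirement is exactly what distinguishes Proposition \ref{prop:geq2} from Proposition \ref{prop:geq}, where $f$ is allowed to change sign and only a lower bound on some positive-measure subset can be salvaged.
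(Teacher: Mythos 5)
Your argument breaks at the very first reduction: ``I may fix the sign and assume $f\ge 0$'' is not legitimate. The symmetry $(S,f)\mapsto(-S,-f)$ only allows replacing $f$ by $-f$ globally; it cannot produce a sign for a sign-changing $f$, and the proposition's hypothesis is merely that $f$ does not vanish on $\Omega^{d_0}$ --- no sign is assumed. Without the sign, both of your key steps (the $S^-$ test-function argument and the Harnack/Hopf propagation of strict positivity) are unavailable, and in fact the uniform bound you set out to prove is \emph{false} under the stated hypotheses. Take $\Omega=(-1,1)^2$, $u\equiv 0$, and $f(x,y)=x$ (one of the sources used in the paper's numerical experiments): by the uniqueness in Proposition \ref{prop:wellpos}, $S(u)$ is odd in $x$, hence its trace on the segment $\{x=0\}$ vanishes; that segment crosses the boundary strip $\Omega^{d_0}$, and by interior continuity of $S(u)$ one gets $\operatorname{ess\,inf}_{\Omega^{d_0}} b(u)S(u)^2=0$, so no uniform $Q$ exists. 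Your own closing remark concedes that the uniform bound ``genuinely requires a sign for $f$'' --- but that sign is not among the hypotheses, so what you have proved is a different proposition with a strengthened assumption, not Proposition \ref{prop:geq2}.

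The paper's proof is far weaker and shorter, and it reveals the intended reading of the statement: arguing by contradiction, suppose $S(u)\equiv 0$ in $\Omega^{d_0}$; since $u=0$ there, testing \eqref{eq:prob} with $v\in H^1_0(\Omega^{d_0})\subset H^1(\Omega)$ kills every term on the left-hand side and leaves $\int_\Omega f v=0$ for all such $v$, forcing $f=0$ a.e.\ in $\Omega^{d_0}$, a contradiction. This establishes only that $b(u)S(u)^2$ is not a.e.\ zero on $\Omega^{d_0}$, hence --- exactly as in Proposition \ref{prop:geq} --- that there are $Q>0$ and a \emph{positive-measure subset} $\Omega^*\subset\Omega^{d_0}$ with $b(u)S(u)^2\ge Q$ on $\Omega^*$. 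The remark preceding the proposition says explicitly that the conclusion is ``an estimate equivalent to the one of Proposition \ref{prop:geq}'', and this weaker form is all that the downstream arguments (coercivity via the generalized Poincar\'e inequality of Lemma \ref{Lemma1}) actually use; the ``a.e.\ in $\Omega^{d_0}$'' in the display should be read in that sense. A secondary flaw, even granting your added sign hypothesis: your treatment of the corners of the polygonal $\Omega$ is insufficient --- continuity of $S$ together with positivity at interior and non-corner boundary points does not prevent $\min_{\overline\Omega}S=0$ from being attained \emph{at} a corner, and the fact that corners form a null set is irrelevant to a pointwise minimum; one would need a genuine boundary point lemma there (e.g.\ by reflection across the Neumann boundary). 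But this is minor next to the main gap: the unjustified sign assumption on $f$.
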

\begin{proof}
By contraddiction of the thesis, suppose $S(u) \equiv 0$ in $\Omega^{d_0}$ and recall $\Omega^{in} = \Omega \setminus \Omega^{d_0}$; then it holds
\begin{equation}
\int_{\Omega^{in}}a(u) \nabla S(u)\cdot \nabla v + \int_{\Omega^{in}} b(u) S(u)^3 v = \int_{\Omega} fv \quad \forall v \in H^1(\Omega)
\label{eq:geq2eq}
\end{equation}
	The space $H^1_0(\Omega^{d_0})$, obtained by closing the space of all the smooth function whose support is compactly contained in $\Omega^{d_0}$ with respect to the $H^1$ norm, is well defined; moreover $H^1_0(\Omega^{d_0})\subset H^1(\Omega)$. Hence, equation \eqref{eq:geq2eq} holds for all $v \in H^1_0(\Omega^{d_0})$, and this implies that 
	\[
	\int_\Omega f v = 0 \qquad \forall v \in H^1_0(\Omega^{d_0}),
	\]
which eventually entails that $f=0$ a.e. in $\Omega^{d_0}$, that is a contraddiction with hypotheses.
	\end{proof}

\section*{Acknowledgments}
E. Beretta and M. Verani thank the New York University in Abu Dhabi for its kind hospitality that permitted a further development of the present research. We acknowledge the use of the MATLAB library redbKIT \cite{redbKIT} for the numerical simulations presented in this work.

\printbibliography
\end{document}